\title{Computing Borel's Regulator}
\author{Zacky Choo, Wajid Mannan, Rub\'{e}n J.~S\'anchez-Garc\'ia \and Victor P.~Snaith}
\newcommand{\Tr}{\operatorname{Tr}}
\newtheorem{thm}{Theorem}[section]
\newtheorem{prop}[thm]{Proposition}
\newtheorem{cor}[thm]{Corollary}
\newtheorem{lem}[thm]{Lemma}
\theoremstyle{remark}
	\newtheorem{remark}{Remark}[section]
\theoremstyle{definition}
	\newtheorem{df}[thm]{Definition}
	\newtheorem{ex}{Example}
\newcommand{\norm}[1]{\ensuremath{\| #1 \|}}
\begin{document}

\maketitle

%\tableofcontents

\abstract{\noindent  We present an infinite series formula based on the Karoubi-Hamida integral, for the universal Borel class evaluated on $H_{2n+1}(GL(\mathbb{C}))$.   For a cyclotomic field $F$ we define a canonical set of elements in $K_3(F)$  and present a novel approach (based on a free differential calculus) to constructing them. Indeed, we are able to explicitly construct their images in $H_{3}(GL(\mathbb{C}))$ under the Hurewicz map. Applying our formula to these images yields a value $V_1(F)$, which coincides with the Borel regulator $R_1(F)$ when our set is a basis of  $K_3(F)$ modulo torsion. For $F= \mathbb{Q}(e^{2\pi i/3})$ a computation of $V_1(F)$ has been made based on our techniques.}

{\let\thefootnote\relax\footnotetext{This research was funded by the EPSRC grant EP/C549074/1.}}

\bigskip
\section{Introduction}\label{section:Intro}

Let $F$ be an algebraic number field and $\mathcal{O}_F$ its ring of integers.
For $n\geq 1$, the Borel regulator $R_n(F)$ is a real valued numerical invariant of $F$.  It measures the covolume of the algebraic $K$-theory groups $K_{2n+1}(F)$ modulo torsion, embedded as a lattice in $\mathbb{R}^{d_n}$ (where the integer $d_n$ is to be specified later).  Although defined in terms of the odd-dimensional K-theory groups, knowledge of the Borel regulator has implications for the even dimensional K-theory groups of $F$ and $\mathcal{O}_F$.  In particular, the Lichtenbaum conjecture (proven in many cases, such as abelian extensions of the rationals \cite{BG03, HK03, KQDF, Sn04}) gives the order of $K_{2n}(\mathcal{O}_F)$ up to a power of 2 in terms of the Dedekind zeta function of $F$, the order of $\textup{Tor}\left(K_{2n+1}(F)\right)$ and $R_n(F)$.

However, explicitly computing the Borel regulator is a very difficult problem, even in the case of cyclotomic number fields.  The standard approach to the Borel regulator is via comparison with the Beilinson regulator \cite{BG02}, which in turn is expressed via polylogarithms; moreover, Zagier's conjectures \cite{Zagier91}, which generalize classical results of Bloch \cite{Bloch00, Grayson81}, allow one to map the higher Bloch group $\mathcal{B}_{n+1}(F)$ modulo torsion to a lattice in $\mathbb{R}^{d_n}$. However, the identification of $\mathcal{B}_{n+1}(F)$ modulo torsion with a full sublattice of $K_{2n+1}(F)$ is delicate \cite{BGdJarxiv11}.

We present a new approach to computing $R_1(F)$ when $F$ is a cyclotomic field.   We first describe a set of elements $z_u \in K_3(F)$ corresponding to primitive roots of unity $u \in F$  (Section \ref{section:BigCycle}).  We then explain how to compute the covolume $V_1(F)$ of the lattice generated by the images of these elements in the real vector space $\mathbb{R}^{d_1}$.  If the elements $z_u \in K_3(F)$ form a basis modulo torsion, then $R_1(F)=V_1(F)$ holds and we have computed the desired Borel regulator.  Otherwise $V_1(F)$ is an integer multiple of $R_1(F)$.

Determining for which cyclotomic fields the elements $z_u$ form a basis modulo torsion is beyond the scope of this article.  Nevertheless, our formula (Theorem \ref{thm:FinalSeries}) allows one in principle to compute $V_1(F)$ to any desired degree of accuracy.  In particular, one could numerically verify if $V_1(F) \neq 0$, in which case the elements $z_u$ would at least generate a full sub-lattice modulo torsion.

%by defining and showing how to compute $V_1(F)$ we hope to have laid foundations for the computation of $R_1(F)$ by these means.   

%Other elements in the K--theory of cyclotomic fields have been constructed 
%(by for example Beilinson, Deligne, Soul$\rm \acute{e}$ and by comparison with the Bloch group \cite{BJG})\todo{reference}. Yet
The elements $z_u \in K_3(F)$ have the advantage that we are able to express their images under the Hurewicz map explicitly in $H_3(GL(F))$.   Thus they are amenable to application of our formula for the universal Borel class (see below). Understanding how our elements relate to other elements in the $K$-theory of cyclotomic fields appears to be an intricate problem, and motivates further investigation.

Computing $V_1(F)$ breaks down into two stages.  Firstly (Section \ref{4}) we construct the images of the $z_u$ in $H_3(E(\mathbb{C}))$ as explicit chains ($E(\mathbb{C})$ denotes the discrete group of elementary matrices over the complex numbers). One of the difficulties in computing the Borel regulator of a field $F$ is that it is very hard to get such an explicit description of elements in the K--theory of $F$ or at least their images in $H_3(GL(F))$.  We overcome this (Section \ref{section:FreeFoxDifferentiation}) using novel techniques based on constructing a free differential calculus (motivated by the one invented by Fox as a tool in knot theory \cite{F60}).

Then  (Section \ref{SectionTwo}) we show how to apply the universal Borel class to these chains by expanding the Karoubi-Hamida integral \cite{Ha00} as an infinite power series (Theorem \ref{thm:FinalSeries}).  In fact, our formula works for any $n \ge 1$ and any number field $F$; it allows the computation of the Borel regulator class (defined in \S\ref{section:define}) applied to any element of $H_{2n+1}(GL(F))$ (Theorem \ref{thm:MainThm}).

At present the definition of $V_1(F)$ is limited to $F$ a cyclotomic field.  However, our introduction of a free differential and the methods based on it to describe elements in $K_3(F)$ are general (cf.~Remark \ref{rmk:generalFields}), and could in principle be applied to computing the Borel regulator of any field.  Further, the formula developed in Section \ref{SectionTwo} works for evaluating the universal Borel class on any number field (Theorem \ref{thm:MainThm}).  

Note that our elements lie in $K_3(F)$, so they do not allow the investigation of $R_n(F)$ for $n>1$.  In \S\ref{section:basis}, Remark \ref{Subsection:HighDim}, we indicate how one might generalize our construction to $n>1$, by considering the K--theory of spherical varieties. Observe that the formula for the Karoubi-Hamida integral works for any $n>1$ (Theorem \ref{thm:MainThm}). 

The theory developed in this paper has been implemented as a computer algorithm by the $1^{\rm st}$ author, in his PhD thesis \cite{ZackyPhD}.  This  gives in an actual estimate for $V_1(F)$ for the field  $F= \mathbb{Q}(e^{2\pi i/3})$, and it is described in Appendix \ref{section:Numerical}. We have also included some general computational aspects of our approach for anyone interested in an implementation (Appendix \ref{section:ComputerAlgorithm}).
%The main focus of the present article is on the theory behind this novel approach and not on an actual implementation, but we describe some computational aspects on 

\begin{remark}${}$
The $1^{\rm st}$ and $4^{\rm th}$ authors have results analogous to those in this article for the $p$-adic regulator \cite{CS10}.
\end{remark}

\section{Defining $V_1$ of a cyclotomic field} \label{section:BigCycle}

Let $F$ be the cyclotomic number field $\mathbb{Q}(\omega)$, where $\omega$ is a $q^{\rm th}$ root of unity ($q\geq3$).  A large part of the difficulty in computing the Borel Regulator of $F$ comes from the inaccessibility of elements of in the $K$-theory of $F$.  In this section we will define a canonical set of elements in $K_3(F)$, defined up to torsion (\S \ref{3.0}).   The images of this set under the Borel Regulator map generate a lattice and we define $V_1(F)$ to be its covolume (\S \ref{section:basis}).  

In Section \ref{4} we will then proceed to express our elements of $K_3(F)$ in the form necessary to compute $V_1(F)$.
We first recall the definition of the Borel regulator of a number field (\S\ref{section:define}).

%generated by the images of this set  (via the power series of \S\ref{section:formula3})

\subsection{The Borel regulator}\label{section:define}
Let $F$ be a number field, $\mathcal{O}_F$ its ring of integers and $q_1, q_2$ the number of real embeddings, respectively conjugate pairs of complex embeddings, $F\hookrightarrow\mathbb{C}$.
The \emph{Borel regulator maps} are homomorphisms 
\begin{equation}\label{eq:BorelRegulatorMap}
  K_{2n+1}(\mathcal{O}_F) \cong K_{2n+1}(F)  \longrightarrow \mathbb{R}^{d_n} \qquad (n \ge 1)
\end{equation}
from the odd algebraic $K$-theory groups of $F$ (or $\mathcal{O}_F$) to $\mathbb{R}^{d_n}$, where $d_n$ is $q_1 + q_2$ if $n$ is even and $q_2$ if $n$ is odd. They can be defined in the following way (cf.~\S3.1.3 in \cite{Goncharov05}).

The Hurewicz homomorphism induces the following homomorphism from K-theory into the homology (with integral coefficients) of the discrete group $GL(\mathbb{C})$:
$$
\xymatrix{h_{2n+1}\colon K_{2n+1}(\mathbb{C}) = \pi_{2n+1}(BGL(\mathbb{C})^+) \ar[r] & H_{2n+1}(BGL(\mathbb{C})^+) \cong H_{2n+1}(GL(\mathbb{C}))\,.}
$$
\begin{remark}\label{remark:SuslinStability}
Suslin's stability result \cite[Corollary 2.5.3]{Kn01}, gives that if $N  \geq 2n+1$ then
\[
   H_{n}(GL(\mathbb{C})) \cong H_{n}(GL_N(\mathbb{C}))\,.
\]
\end{remark}
\noindent{}Let $\mathbb{R}(n) = (2\pi i)^n \mathbb{R}$ for $n \ge 1$. There exists a \emph{universal Borel class}  (see \cite{BG02} for a definition)
\[
	b_n \in H_c^{2n+1}\left( GL(\mathbb{C}); \mathbb{R}(n)\right)
\]
in the continuous cohomology of $GL(\mathbb{C}) = \text{colim}_r\, GL_r(\mathbb{C})$.   Application of $b_n$ induces a map $H_{2n+1}(GL(\mathbb{C})) \to \mathbb{R}(n)$.  The \emph{universal Borel regulator map}
\[
	r_n \colon K_{2n+1}(\mathbb{C}) \longrightarrow \mathbb{R}(n)
\]	
is defined to be the composition $r_n = b_ n\circ h_{2n+1}$.

For the definition on an arbitrary number field $F$, we compose with the maps induced on $K$-theory by the different embeddings $F \hookrightarrow \mathbb{C}$:
\[
	K_{2n+1}(F) \longrightarrow \bigoplus_{\text{Hom}(F,\mathbb{C})} K_{2n+1}(\mathbb{C}) \longrightarrow X_F \otimes \mathbb{R}(n),
\]
where $X_F = \mathbb{Z}[{\text{Hom}(F,\mathbb{C})}]$.  The image of this map is invariant under complex conjugation acting on both $\text{Hom}(F,\mathbb{C})$ and $\mathbb{R}(n)$.   Hence we have a map
\[
	K_{2n+1}(F) \longrightarrow \left(X_F \otimes \mathbb{R}(n)\right)^c
\]
where $(\ )^c$ denotes the subgroup of invariants under complex conjugation.   If $n$ is odd, we take a basis of $\left(X_F \otimes \mathbb{R}(n)\right)^c$ consisting of $\psi \otimes i -\overline{\psi}\otimes i$ for each conjugate pair of complex embeddings $\psi,\overline{\psi}\colon F \to \mathbb{C}$.   If $n$ is even we take a basis of $\left(X_F \otimes \mathbb{R}(n)\right)^c$ consisting of $\psi \otimes 1+\overline{\psi}\otimes 1$ for each conjugate pair of complex embeddings $\psi,\overline{\psi}\colon F \to \mathbb{C}$, together with $\psi \otimes 1$ for each real embedding $\psi\colon F\to \mathbb{R}$.  Either way, this yields a natural identification of $\left(X_F \otimes \mathbb{R}(n)\right)^c$ with $\mathbb{R}^{d_n}$. 

Borel proved that for $n \ge 1$ the Borel regulator map (\ref{eq:BorelRegulatorMap}) is an embedding of $K_{2n+1}(F)$ modulo torsion in $\mathbb{R}^{d_n}$ and its image is a full lattice in $\mathbb{R}^{d_n}$.  The covolume of this lattice is called the \emph{Borel regulator} for $F$, written $R_n(F)$.

\subsection{A canonical set of elements in $K_3(F)$} \label{3.0}

Let $R$ be a ring and let $E(R)$ denote the elementary matrices of $R$.  We will make use of the following result:

\begin{lem}\label{tinkerbell}  Let $R$ be a ring. Then the Hurewicz map induces a surjective map: $$
	\xymatrix{h_3\colon K_3(R) = \pi_3(BE(R)^+)\longrightarrow H_3(BE(R)^+)\cong H_3(BE(R)) \cong H_3(E(R))\,.}
$$
\end{lem}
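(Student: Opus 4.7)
My plan first addresses the three isomorphisms on the right of the statement, then tackles the substantive surjectivity claim via a comparison with the Steinberg group.

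For the identifications: the plus construction preserves homology, giving $H_3(BE(R)^+)\cong H_3(BE(R))$, and $H_3(BE(R))\cong H_3(E(R))$ is the standard identification of the singular homology of the classifying space of a discrete group with its group homology. The identification $K_3(R)=\pi_3(BE(R)^+)$ comes from recognising $BE(R)^+$ as the universal cover of $BGL(R)^+$: by Whitehead's lemma $E(R)=[GL(R),GL(R)]$ is perfect, and Quillen's $+$-construction then produces a simply connected space with the same higher homotopy groups as $BGL(R)^+$, namely $K_n(R)$ for $n\geq 2$.

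For the surjectivity, I would compare with the universal central extension given by the Steinberg group:
\[
1\longrightarrow K_2(R)\longrightarrow St(R)\longrightarrow E(R)\longrightarrow 1.
\]
Since $St(R)$ is super-perfect ($H_1St=H_2St=0$), the space $BSt(R)^+$ is $2$-connected, so by Hurewicz $\pi_3(BSt(R)^+)\cong H_3(St(R))$, and by the Gersten--Wagoner theorem this is $K_3(R)$. The induced map $BSt(R)^+\to BE(R)^+$ is then a model for the $2$-connected cover, and in particular a $\pi_3$-isomorphism. By naturality of the Hurewicz map, $h_3$ is identified with the map $H_3(St(R))\to H_3(E(R))$ induced by the Steinberg projection, so it suffices to prove surjectivity of this latter map.

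The plan here is to analyse the Lyndon--Hochschild--Serre spectral sequence
\[
E^2_{p,q}=H_p\bigl(E(R);H_q(K_2(R))\bigr)\Longrightarrow H_{p+q}(St(R)),
\]
whose edge homomorphism in total degree $3$ is precisely the map in question, with image $E^\infty_{3,0}$, the intersection of the kernels of the outgoing differentials from $E^r_{3,0}$. The $d_2$ lands in $H_1(E(R);K_2(R))=K_2(R)\otimes H_1(E(R))=0$ since $E(R)$ is perfect, and so vanishes automatically. The hard part will be the vanishing of the $d_3$ differential $H_3(E(R))\to E^3_{0,2}$, where $E^3_{0,2}$ is a subquotient of $H_2(K_2(R))=\Lambda^2 K_2(R)$; this is the main obstacle, and I would expect it to follow from exploiting $H_2(St(R))=0$, which forces cancellations on the $p+q=2$ line of the spectral sequence, in combination with the universal property of the Steinberg central extension.
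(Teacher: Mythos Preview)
Your preliminary identifications are fine, but you have massively overcomplicated the surjectivity argument. The paper's proof is one sentence: by construction $BE(R)^+$ is simply connected (you note this yourself, since $E(R)$ is perfect and the plus construction kills $\pi_1$), and for any simply connected space the Hurewicz map $h_3\colon \pi_3\to H_3$ is already surjective. This is the standard refinement of the Hurewicz theorem: if $X$ is $(n-1)$-connected with $n\geq 2$, then $h_n$ is an isomorphism and $h_{n+1}$ is an epimorphism; equivalently, it is the low-degree part of Whitehead's exact sequence $\Gamma(\pi_2 X)\to\pi_3 X\to H_3 X\to 0$.

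Your detour through $St(R)$ and the Lyndon--Hochschild--Serre spectral sequence is not wrong in spirit, but it is both unnecessary and, as stated, incomplete: you correctly isolate the vanishing of $d_3\colon H_3(E(R))\to E_3^{0,2}$ as the obstruction and then stop. Filling that in would amount to re-deriving the elementary Hurewicz surjectivity statement by much heavier machinery. The moral is that once you have observed $BE(R)^+$ is simply connected, you should stop there---the Hurewicz theorem already gives exactly what is needed.
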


\begin{proof}
By construction, the space $BE(R)^+$ is simply connected, so the surjectivity of $h_3$ follows immediately from the Hurewicz Theorem.
\end{proof}

Now let $F=\mathbb{Q}(\omega)$, where $\omega$ is a $q^{\rm th}$ root of unity ($q\geq3$).  
Let $E(F)$ denote the group of elementary matrices over $F$.   The kernel of $h_3\colon K_3(F)\to H_3(E(F)) $ is contained in the kernel of the Borel regulator map which contains only torsion elements.  Thus given  an element in $H_3(E(F))$, there exists a preimage in $K_3(F)$ (Lemma \ref{tinkerbell}), and it is unique up to torsion.  

We may therefore specify elements of $K_3(F)$ up to torsion, by giving cycles in the inhomogeneous bar resolution of $E(F)$ tensored with $\mathbb{Z}$, which we write as

$$ \cdots\stackrel d\longrightarrow B_n \stackrel d\longrightarrow B_{n-1} \stackrel d\longrightarrow \cdots
$$
For later reference we recall that the boundary map $d\colon B_3 \to B_2$ is given by $$ d( [g_{1} | g_{2} |  g_{3}])  =   [g_{2} |  g_{3}]    - [g_{1}  g_{2} |  g_{3}] +[g_{1} | g_{2}   g_{3}]-[g_1|g_2] $$
and the boundary map $d\colon B_2 \to B_1$ is given by $$ d( [g_{1} | g_{2}])  =   [g_{2}]  - [g_{1}g_{2}] +[g_{1}]. $$

Let $A=\mathbb{Z}[t, t^{-1}]$ be the ring of Laurent polynomials over $\mathbb{Z}$.  For any primitive $q^{\rm th}$ root of unity $u\in F$, we have a ring homomorphism $\alpha_u\colon A \to F$, given by $x \mapsto x|_{t=u}$, the element of $F$ obtained by evaluating $t$ at $u$.

For a unit $\lambda \in A$, let $g_{ij}^\lambda$ denote the matrix which differs from the identity in the $(i,i)^{\rm th}$ and $(j,j)^{\rm th}$ entries only, which are $\lambda$, $\lambda^{-1}$ respectively.  Let $a,b\in E(A)$ be 
\[ a = g_{12}^t=\left(
         \begin{array}{ccc}
           t & 0 & 0 \\
           0 & t^{-1} & 0 \\
           0 & 0 & 1 \\
         \end{array}
       \right),~
       b = g_{13}^{-t}=\left(
         \begin{array}{ccc}
           -t & 0 & 0 \\
           0 & 1 & 0 \\
           0 & 0 & -t^{-1} \\
         \end{array}
       \right).
\]
We write the inhomogeneous bar resolution of $E(A)$ as
$$ \cdots\stackrel {d'}\longrightarrow B_n' \stackrel {d'}\longrightarrow B_{n-1}' \stackrel {d'}\longrightarrow \cdots
$$
As $a,b$ commute, we have $d'([a|b]-[b|a])=0$.  Thus $[a|b]-[b|a]$ is a cycle and represents an element of $H_2(E(A))\cong K_2(A)$.  In fact (\cite[ pp. 71--75]{M71}) this element is trivial, so the cycle is actually the boundary of some $Z_1 \!\in\! B'_3$.  That is 
$$
	d'(Z_1)= [a|b]-[b|a].
$$

Let $h_3'\colon K_3(A) \to H_3(E(A))$ denote the surjective homomorphism of Lemma \ref{tinkerbell}.

\begin{lem} Given any other $Z_1'$ such that $d'(Z_1')= [a|b]-[b|a]$, the difference $Z_1'-Z_1$ is a cycle and represents a class $h_3'(y)\in H_3(E(A))$,  for some element $y \in K_3(A)$.  
\end{lem}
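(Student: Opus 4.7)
The plan is to verify each of the three assertions in the statement in order, each of which follows almost immediately from what has already been set up.

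First, I would observe that since $d'$ is $\mathbb{Z}$-linear and, by hypothesis, $d'(Z_1') = [a|b]-[b|a] = d'(Z_1)$, we get
\[
	d'(Z_1' - Z_1) = d'(Z_1') - d'(Z_1) = 0,
\]
so $Z_1' - Z_1$ is a $3$-cycle in the inhomogeneous bar complex of $E(A)$. This takes care of the first claim.

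Next, any such cycle represents a well-defined homology class $[Z_1'-Z_1] \in H_3(E(A))$. To produce the required element of $K_3(A)$, I would invoke Lemma \ref{tinkerbell} applied to the ring $R = A$: the Hurewicz map
\[
	h_3'\colon K_3(A) \longrightarrow H_3(E(A))
\]
is surjective. Therefore there exists $y \in K_3(A)$ with $h_3'(y) = [Z_1' - Z_1]$, which is exactly the final assertion.

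There is essentially no obstacle here: the first step is a one-line application of linearity of the boundary, and the second is a direct appeal to the surjectivity statement already proved as Lemma \ref{tinkerbell}. The only subtlety worth flagging explicitly is that $y$ is not unique — it is only determined modulo $\ker(h_3')$, which (as noted in the paragraph preceding the lemma) consists of torsion elements; this matches the paper's standing convention that elements of $K_3(F)$ of interest are specified only up to torsion, so no further work is required.
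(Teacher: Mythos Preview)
Your proof is correct and follows essentially the same two-step argument as the paper: first observe that $d'(Z_1'-Z_1)=0$ by linearity, then invoke the surjectivity of $h_3'$ from Lemma~\ref{tinkerbell} to produce $y$. One minor remark: your closing comment about $\ker(h_3')$ consisting of torsion refers to the discussion of $h_3\colon K_3(F)\to H_3(E(F))$, not $h_3'$ over $A$; the torsion statement for $A$ is established separately (via $K_3(A)\cong \mathbb{Z}/2\oplus\mathbb{Z}/48$) in the theorem that follows, but as you note it is not needed here.
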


\begin{proof}
Clearly $d'(Z_1'-Z_1)=0$, so  $Z_1'-Z_1$ represents a homology class.  As $h_3'$ is surjective, there exists $y \in K_3(A)$ such that $h_3'(y)$ is this homology class.
\end{proof}

Thus $Z_1$ is a well defined chain modulo (chains representing) the image of $K_3(A)$.  An important point to note is that the construction of $Z_1$ made no reference to the cyclotomic field $F$ so we may make the following definition:

\begin{df}\label{def:Z1}
The \emph{universal chain} (defined up to chains representing elements in the image of $K_3(A)$) is the chain $Z_1 \in B_3'$ satisfying $d'(Z_1)= [a|b]-[b|a]$.
\end{df}
\begin{df}\label{Z2}
Let $q \ge 3$. We define $Z_2(q) \in B_3'$ as
\begin{equation*} 
Z_2(q) = \sum_{r=0}^{q-1} \left(   
  [a^{r}|a|b] 
-  [a^{r}|b|a]  
+   [b|a^{r}|a]   \right). 
\end{equation*}  
\end{df}
\noindent{}Then (writing $I$ for the identity matrix) we have
\begin{eqnarray*}
d'Z_2(q) &=&  \sum_{r=0}^{q-1} \left ( 
( [a|b] -[b|a]) -
 ([a^{r+1}|b]- [a^{r}|b])+(  [b|a^{r+1}]  - [b|a^{r}] )
\right) \\
&=&
q( [a|b] -[b|a]) -  ([a^{q}|b]-[I|b]) +
(  [b|a^{q}]  - [b|I] ).
\end{eqnarray*}
Note that $Z_2(q)$ is given explicitly above, whereas for $Z_1$ we have only proven existence and uniqueness up to the image of $K_3(A)$ under $h_3'$.

Let $\alpha_u$ denote the induced chain map:
\begin{eqnarray*}
\cdots\stackrel {d'}\longrightarrow &B_n'& \stackrel {d'}\longrightarrow B_{n-1}' \stackrel {d'}\longrightarrow \cdots\\
&\downarrow&\!\!\!\!\!\!\alpha_u   \qquad \downarrow \alpha_u \\
 \cdots\stackrel d\longrightarrow &B_n& \stackrel d\longrightarrow B_{n-1} \stackrel d\longrightarrow \cdots
\end{eqnarray*}
Note we use $\alpha_u$ to denote the induced maps on matrices, chains and elements of K-theory.
We have $$d \alpha_u(Z_2(q)) = \alpha_u d'Z_2(q) = q\alpha_u( [a|b] -[b|a])$$
as $\alpha_u(a)^q =I$.  

Let $Z=qZ_1-Z_2(q)$.  We then have $\alpha_u(Z) \in B_3$ is a cycle representing a homology class $\mathcal{Z}_u=[Z|_{t=u}]\in H_3(E(F))$. Its preimage under $h_3$, denoted $z_u\in K_3(F)$, is then determined up to torsion.

\begin{thm}
For each primitive $q^{\rm th}$ root of unity in the cyclotomic field $F=\mathbb{Q}(\omega)$, there exists  $z_u\in K_3(F)$, unique up to torsion, satisfying $h_3(z_u)=[(qZ_1-Z_2(q))|_{t=u}]$, where $Z_1$ is a representative of the universal chain (Definition \ref{def:Z1})  and $Z_2(q)$ is as given in Definition \ref{Z2}.
\end{thm}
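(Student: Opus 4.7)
The plan is to assemble the pieces already constructed in \S\ref{3.0} in three short steps, since the statement is essentially a summary of that discussion.

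\emph{Step 1: $\alpha_u(qZ_1 - Z_2(q))$ is a $3$-cycle in $B_3$.} Because $\alpha_u$ is a chain map, $d\alpha_u(Z_1) = \alpha_u d'(Z_1) = \alpha_u([a|b] - [b|a])$. For $Z_2(q)$, I would read off the displayed calculation of $d'Z_2(q)$, namely $q([a|b]-[b|a]) - [a^q|b] + [I|b] + [b|a^q] - [b|I]$, and observe that after applying $\alpha_u$ one has $\alpha_u(a)^q = g_{12}^{u^q} = I$ since $u$ is a $q^{\rm th}$ root of unity. Thus $[a^q|b]$ and $[b|a^q]$ become $[I|\alpha_u(b)]$ and $[\alpha_u(b)|I]$ respectively and cancel the $[I|b]$, $[b|I]$ terms, leaving $d\alpha_u(Z_2(q)) = q\alpha_u([a|b]-[b|a])$. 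Subtracting gives $d\alpha_u(qZ_1 - Z_2(q)) = 0$.

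\emph{Step 2: Existence of $z_u$.} The cycle represents a class $\mathcal{Z}_u \in H_3(E(F))$. Applying Lemma~\ref{tinkerbell} with $R = F$ gives that $h_3\colon K_3(F) \to H_3(E(F))$ is surjective, so $\mathcal{Z}_u$ admits a preimage $z_u \in K_3(F)$ with $h_3(z_u) = [(qZ_1 - Z_2(q))|_{t=u}]$, as required.

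\emph{Step 3: Uniqueness up to torsion.} Two preimages differ by an element of $\ker h_3$. I would invoke the observation at the start of \S\ref{3.0} that $\ker h_3$ lies inside the kernel of the Borel regulator $r_1$, combined with Borel's theorem (\S\ref{section:define}) identifying the latter with the torsion subgroup of $K_3(F)$. Hence $z_u$ is unique modulo torsion.

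I do not anticipate a genuine obstacle: the chain $Z_2(q)$ was engineered precisely so that its boundary, after evaluation at a $q^{\rm th}$ root of unity, matches $q$ times the prescribed boundary of $Z_1$. The only substantive subtlety — that the homology class $\mathcal{Z}_u$ might depend on the choice of representative $Z_1$ — is addressed in the paragraph immediately preceding the theorem (where the residual ambiguity, lying in $q\alpha_u(h_3'(K_3(A)))$, is absorbed into torsion); since the theorem statement takes $Z_1$ as fixed, this issue does not enter the formal proof itself.
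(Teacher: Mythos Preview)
Your Steps 1--3 are all correct, but you have misread which ``uniqueness'' the theorem is asserting. The statement says $Z_1$ is \emph{a} representative of the universal chain, and the sentence immediately following the theorem draws the conclusion that the $z_u$ form a \emph{canonical} set modulo torsion. The content of the theorem is precisely that $z_u$ is independent, up to torsion, of which representative $Z_1$ is chosen --- not merely that the preimage of a fixed homology class under $h_3$ is unique up to torsion (which, as you note, was already observed at the start of \S\ref{3.0}).

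The paper's proof accordingly does not repeat your Steps 1--3 (those facts are established in the discussion preceding the theorem). Instead it takes a second representative $Z_1'$, uses the earlier lemma to write $\mathcal{Z}_u' - \mathcal{Z}_u = h_3(\alpha_u(y))$ for some $y \in K_3(A)$, and then invokes the computation $K_3(A) \cong K_3(\mathbb{Z}) \oplus K_2(\mathbb{Z}) \cong \mathbb{Z}/48 \oplus \mathbb{Z}/2$ to conclude $\alpha_u(y)$ is torsion in $K_3(F)$. You allude to exactly this mechanism in your closing paragraph but claim it was ``addressed in the paragraph immediately preceding the theorem''; it was not --- the finiteness of $K_3(A)$ appears only in the proof itself, and that is the step you are missing.
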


\begin{proof}
Given a different choice of representative of the universal chain, $Z_1'$, we would have a homology class $\mathcal{Z}_u'=[\alpha_u(qZ_1'-Z_2(q))]$. Then $$\mathcal{Z}_u'-\mathcal{Z}_u =\alpha_u(h_3'(y))=h_3(\alpha_u(y))$$ for some $y \in K_3(A)$.  We may take $z_u'= \alpha_u(y)+z_u$ to be a preimage of $\mathcal{Z}_u'$ under $h_3$.

However, $$K_3(A) \cong K_3(\mathbb{Z}) \oplus K_2(\mathbb{Z})\cong \mathbb{Z}/2 \oplus \mathbb{Z}/48 \qquad{\rm (\cite[Theorem\, 5.3.30]{Rosenberg94})}.$$ Thus $K_3(A)$  is finite and  $\alpha_u(y) \in K_3(F)$ is an element of torsion as required.
\end{proof}

Thus modulo torsion we have a well-defined  canonical set of elements $z_u \in K_3(F)$, corresponding to the primitive $q^{\rm th}$ roots of unity $u \in F$.

\begin{remark}\label{rmk:generalFields}
Even if $F$ is a number field other than a cyclotomic field, we still have an element $z_u \in K_3(F)$ for each primitive root of unity $u \in F$.  However the number of pairs $u, u^{-1}$  of such roots of unity no longer coincides with the number of pairs of conjugate embeddings $ F\hookrightarrow \mathbb{C}$.  Thus we do not have a natural candidate for a basis for $K_3(F)$ (modulo torsion).
\end{remark}

\subsection{The covolume $V_1(F)$} \label{section:basis}

Let $l=\frac{\varphi(q)}2$ (where $\varphi$ is the Euler totient function)  and let $\pm v_1,\cdots, \pm v_l$ be the units of $\mathbb{Z}/q\mathbb{Z}$.  Also let $\xi= e^{2\pi i /q}$.   We have $l$ conjugate pairs of embeddings 
$\psi_j,\overline{\psi_j}\colon F = \mathbb{Q}(\omega) \hookrightarrow \mathbb{C}$, given by $\omega \mapsto \xi^{v_j}$ and $\omega \mapsto \xi^{-v_j}$ for $j=1,\cdots,l$.  We write $\psi_j$ to denote the induced map on chains and elements of K-theory.
The rank of $K_3(F)$ is $d_1= q_2=l$.

We also have $l$ conjugate pairs of primitive $q^{\rm th}$ roots of unity, $u_k, \overline{u_k}=\omega^{\pm v_k}\in F$ for $k=1,\cdots,l$.  Correspondingly we have elements $z_{u_k}, z_{\overline{u_k}}\in K_3(F)$ for $k=1, \cdots,l$. 

For $m$ a unit in $\mathbb{Z}/q\mathbb{Z}$, let $\beta_{m} \colon A \to \mathbb{C}$ be the ring homomorphism sending $t \mapsto \xi^{m}$.  Again we write $\beta_m$ to denote maps on chains and elements of K-theory.  Note that: $$\psi_j \circ \alpha_{u_k}= \beta_{v_jv_k}$$

Let $\left(L_{jk}\right)_{j,k}$ be the matrix of coordinates of the images of the $z_{u_k}$ under the Borel regulator map, $K_3(F) \to \mathbb{R}^{d_1} =  \left(X_F \otimes \mathbb{R}(n)\right)^c$, with respect to the basis $$\{\psi_j\otimes i - \overline{\psi}_j\otimes i\in\mathbb{R}^{d_1}\vert\,\, j=1,\cdots,l\}.$$

Let $Z=qZ_1-Z_2(q)$ as in \S\ref{3.0}.
\begin{lem}
We have $L_{jk}=b_1([Z|_{t=\xi^{v_jv_k}}])/i$.
\end{lem}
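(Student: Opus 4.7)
The plan is to simply unpack the definitions carefully and apply the functoriality statement $\psi_j \circ \alpha_{u_k} = \beta_{v_j v_k}$ already recorded in the text.

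First I would write the Borel regulator map on $K_3(F)$ as the composition
\[
K_3(F) \longrightarrow \bigoplus_{\phi \in \text{Hom}(F,\mathbb{C})} K_3(\mathbb{C}) \xrightarrow{\,\oplus r_1\,} \bigoplus_{\phi} \mathbb{R}(1) \hookrightarrow X_F \otimes \mathbb{R}(1),
\]
so that the image of $z_{u_k}$ is
\[
\sum_{j=1}^{l} \Bigl(\psi_j \otimes r_1(\psi_j(z_{u_k})) + \overline{\psi}_j \otimes r_1(\overline{\psi}_j(z_{u_k}))\Bigr).
\]
Next, I would compute $r_1(\psi_j(z_{u_k})) = b_1(h_3(\psi_j(z_{u_k})))$. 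Using the naturality of the Hurewicz map together with the identity $h_3(z_{u_k}) = [\alpha_{u_k}(Z)]$ from \S\ref{3.0}, this becomes $b_1(\psi_j[\alpha_{u_k}(Z)]) = b_1([(\psi_j \circ \alpha_{u_k})(Z)])$. The relation $\psi_j \circ \alpha_{u_k} = \beta_{v_j v_k}$ then gives $r_1(\psi_j(z_{u_k})) = b_1([Z|_{t=\xi^{v_j v_k}}])$, and analogously $r_1(\overline{\psi}_j(z_{u_k})) = b_1([Z|_{t=\xi^{-v_j v_k}}])$.

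Since $b_1$ takes values in $\mathbb{R}(1) = i\mathbb{R}$, I would set $iL_{jk} := b_1([Z|_{t=\xi^{v_j v_k}}])$, noting $L_{jk} \in \mathbb{R}$. Complex conjugation compatibility (which is implicit in landing inside $(X_F \otimes \mathbb{R}(1))^c$) forces $b_1([Z|_{t=\xi^{-v_j v_k}}]) = -iL_{jk}$, which can also be seen directly from the fact that $\overline{\psi}_j$ is obtained from $\psi_j$ by complex conjugation and $b_1 \in H^3_c(GL(\mathbb{C}); \mathbb{R}(1))$ is equivariant. Substituting back yields
\[
\sum_{j} iL_{jk}\,\psi_j - iL_{jk}\,\overline{\psi}_j \;=\; \sum_{j} L_{jk}\bigl(\psi_j \otimes i - \overline{\psi}_j \otimes i\bigr),
\]
which exhibits $L_{jk}$ as the $(j,k)$-coordinate with respect to the chosen basis, proving the lemma.

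The only mildly delicate point is verifying that the sign/imaginary-unit bookkeeping between the conjugate embeddings works out so that the answer truly is $b_1([Z|_{t=\xi^{v_j v_k}}])/i$ rather than, say, $-ib_1(\cdots)$ or twice this quantity; once the conjugation-equivariance of $b_1$ and the chosen basis $\psi_j \otimes i - \overline{\psi}_j \otimes i$ are lined up correctly, the formula follows with no further computation.
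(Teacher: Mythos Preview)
Your proof is correct and follows essentially the same approach as the paper: both compute $r_1(\psi_j(z_{u_k}))$ via the chain $b_1(h_3(\psi_j(z_{u_k}))) = b_1(\psi_j(h_3(z_{u_k}))) = b_1([\beta_{v_jv_k}(Z)]) = b_1([Z|_{t=\xi^{v_jv_k}}])$ using naturality of the Hurewicz map and the identity $\psi_j\circ\alpha_{u_k}=\beta_{v_jv_k}$. The paper simply asserts at the outset that $L_{jk} = r_1(\psi_j(z_{u_k}))/i$ by reference to the definition in \S\ref{section:define}, whereas you spell out the conjugation-equivariance bookkeeping that justifies this identification with the chosen basis---a harmless (and arguably helpful) elaboration.
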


\begin{proof}
The coefficient $L_{jk}$ is  $r_1(\psi_j(z_{u_k}))/i$  (see \S \ref{section:define}).
We have
$$
\psi_j(\mathcal{Z}_{u_k})=[\psi_j\alpha_{u_k}(Z)]=[\beta_{v_jv_k}(Z)] \in H_3(E(\mathbb{C})).
$$
Thus
\begin{eqnarray*}
r_1(\psi_j(z_{u_k}))=b_1(h_3(\psi_j(z_{u_k})))=b_1(\psi_j(h_3(z_{u_k})))=b_1(\psi_j(\mathcal{Z}_{u_k}))\\=b_1[\beta_{v_jv_k}(Z)]=b_1([Z|_{t=\xi^{v_jv_k}}]).
\end{eqnarray*}
Hence for  $ 1\le j,k \le l$ we have $L_{jk}=r_1(\psi_j(z_{u_k}))/i=b_1([Z|_{t=\xi^{v_jv_k}}])/i$.  
\end{proof}

\begin{lem} \label{Humphrey}
Applying the Borel regulator map to $z_{u_k},\,\,z_{\overline{u_k}}\in K_3(F)$ gives elements of $\mathbb{R}^{d_1}$ which differ by a sign.  Thus up to torsion $z_{u_k},\,\,z_{\overline{u_k}}\in K_3(F)$ differ by a sign.  
\end{lem}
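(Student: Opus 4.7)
The plan is to reduce the claim to the anti-invariance of the universal Borel class under complex conjugation.  First I would invoke the previous lemma, which gives the $j$-th coordinate of the image of $z_{u_k}$ in $\mathbb{R}^{d_1}$ as $L_{jk}=b_1([Z|_{t=\xi^{v_jv_k}}])/i$.  By identical reasoning, the $j$-th coordinate of the image of $z_{\overline{u_k}}$ is $b_1([Z|_{t=\xi^{-v_jv_k}}])/i$, using that $\overline{u_k}=\omega^{-v_k}$ so that $\psi_j\circ\alpha_{\overline{u_k}}$ is the ring homomorphism $t\mapsto\xi^{-v_jv_k}$.  It therefore suffices to show these two values are negatives of each other for every $j$.

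The crucial observation is that $\xi^{-m}=\overline{\xi^{m}}$.  Since $Z=qZ_1-Z_2(q)\in B_3'$ is a $\mathbb{Z}$-linear combination of bar symbols whose matrix entries lie in $\mathbb{Z}[t,t^{-1}]$, substituting $t=\xi^{-v_jv_k}$ in each entry produces the entry-wise complex conjugate of the matrix obtained by substituting $t=\xi^{v_jv_k}$.  Letting $c\colon E(\mathbb{C})\to E(\mathbb{C})$ denote the group automorphism given by entry-wise complex conjugation, this would give the chain-level identity $Z|_{t=\xi^{-v_jv_k}}=c_{\#}(Z|_{t=\xi^{v_jv_k}})$, and hence $[Z|_{t=\xi^{-v_jv_k}}]=c_*[Z|_{t=\xi^{v_jv_k}}]$ in $H_3(E(\mathbb{C}))\cong H_3(GL(\mathbb{C}))$.

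I would then appeal to the fact that the universal Borel class is $c$-equivariant, with $c$ acting on the coefficients $\mathbb{R}(1)=2\pi i\mathbb{R}$ by multiplication by $-1$; consequently $b_1(c_*x)=-b_1(x)$ for every $x\in H_3(GL(\mathbb{C}))$.  Combined with the previous paragraph, this yields $b_1([Z|_{t=\xi^{-v_jv_k}}])=-b_1([Z|_{t=\xi^{v_jv_k}}])$, so the coordinate vectors of the images of $z_{u_k}$ and $z_{\overline{u_k}}$ are negatives of each other, proving the first assertion.  The second assertion would then follow from Borel's theorem (recalled at the end of \S\ref{section:define}): since the regulator map is an embedding of $K_3(F)$ modulo torsion, $z_{u_k}+z_{\overline{u_k}}$ is forced to be torsion, i.e.~$z_{\overline{u_k}}=-z_{u_k}$ modulo torsion.

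The step I expect to be most delicate is the equivariance property $b_1\circ c_*=-b_1$.  While it is natural given that $b_1$ is a continuous cohomology class valued in $\mathbb{R}(1)$, a fully rigorous justification requires unpacking an explicit model of $b_1$ (for example the Karoubi-Hamida model used later in the paper), so one may prefer to cite the property from \cite{BG02} rather than reprove it in place.
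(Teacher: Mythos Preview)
Your argument is correct and rests on the same underlying fact as the paper's, namely the behaviour of $b_1$ under complex conjugation; but the paper packages it more economically and avoids the step you flag as delicate.

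Instead of working at the chain level and invoking $b_1\circ c_*=-b_1$ directly, the paper observes an identity of ring homomorphisms $A\to\mathbb{C}$: since $\psi_j(\omega)=\xi^{v_j}$ and $\overline{u_k}=\omega^{-v_k}$, one has
\[
\psi_j\circ\alpha_{\overline{u_k}}=\beta_{-v_jv_k}=\overline{\psi}_j\circ\alpha_{u_k}.
\]
Hence $r_1(\psi_j(z_{\overline{u_k}}))=b_1[\beta_{-v_jv_k}(Z)]=r_1(\overline{\psi}_j(z_{u_k}))$.  The needed sign then comes for free from the structural fact already recorded in \S\ref{section:define}: the image of any $z\in K_3(F)$ in $X_F\otimes\mathbb{R}(1)$ is invariant under complex conjugation acting simultaneously on $\Hom(F,\mathbb{C})$ and on $\mathbb{R}(1)$, which with respect to the chosen basis reads $r_1(\overline{\psi}_j(z))=-r_1(\psi_j(z))$.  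This is of course equivalent to your equivariance statement, but by quoting it in this form the paper sidesteps having to unpack a cocycle model for $b_1$.  Your route is equally valid; the paper's just hides the analytic input inside the definition of the regulator map.
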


\begin{proof}
The invariance under complex conjugation of the image of $z_{u_k}$ in $ \left(X_F \otimes \mathbb{R}(n)\right)^c$, implies that  $r_1(\overline{\psi}_j(z_{u_k}))=-r_1(\psi_j(z_{u_k}))$ (see \S \ref{section:define}).   Thus 
\[
r_1(\psi_j(z_{\overline{u_k}}))= b_1[\beta_{-v_jv_k}(Z)]= r_1(\overline{\psi}_j(z_{u_k}))=-r_1(\psi_j(z_{u_k}))
.\qedhere
\]
\end{proof}

From Lemma \ref{Humphrey} we have that the following are well defined, independently of the choice of signs on the $v_k$:

\begin{df}
We define  $\textup{ind}(F)\in \mathbb{Z}$ to be the index of the lattice in $K_3(F)$ modulo torsion, generated by the $z_{u_k},\,\,\, k=1,\cdots,l$.  
\end{df}

\begin{df}
We define $V_1(F)$ to be the covolume of the lattice in $\mathbb{R}^{d_1}$ generated by the images of the  $z_{u_k},\,\,\, k=1,\cdots,l$, under the Borel Regulator map.
\end{df}
\begin{remark}
It is not a priori clear that the $z_{u_k}$ are non-zero elements of $K_3(F)$, let alone linearly independent.  However if one computes a non-zero value of $V_1(F)$ (see Appendix \ref{section:Numerical} for a reference to a computation which appears to to do this in the case $q=3$) it follows that 
the $z_{u_k}$ are linearly independent and generate a finite index subgroup of $K_3(F)$; in particular, $\textup{ind}(F) \neq 0$.
%\todo{So far only done for $q=3$}
\end{remark}

\begin{thm} \label{almost there}
If $V_1(F) \neq 0$ then $\textup{ind}(F)\neq 0$ and the Borel regulator satisfies $R_1(F)=V_1(F)/\textup{ind}(F)$.
\end{thm}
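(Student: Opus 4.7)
The plan is to exploit two facts established earlier: that the Borel regulator map descends to an injection of $K_{2n+1}(F)$ modulo torsion into $\mathbb{R}^{d_n}$, and that the covolume of a sublattice of a full lattice $L \subset \mathbb{R}^{d_1}$ equals the index of the sublattice times the covolume of $L$.

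First I would unpack the definitions. Let $L_0 \subset K_3(F)/\textup{Tor}$ denote the subgroup generated by the images of $z_{u_1},\ldots,z_{u_l}$, and let $L = K_3(F)/\textup{Tor}$, which by Borel's theorem is a free abelian group of rank $d_1 = l$ whose image $r_1(L) \subset \mathbb{R}^{d_1}$ is a full lattice of covolume $R_1(F)$. The image $r_1(L_0)$ is by definition the subgroup of $\mathbb{R}^{d_1}$ generated by the images of the $z_{u_k}$ under the Borel regulator map, and its covolume (as a subgroup of the linear span of its elements) is $V_1(F)$.

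Next I would argue that the hypothesis $V_1(F)\neq 0$ forces $r_1(L_0)$ to have full rank $l$ in $\mathbb{R}^{d_1}$: otherwise the images would lie in a proper linear subspace and the covolume would vanish. Since $r_1$ is injective on $L$ modulo torsion, this in turn forces $L_0$ to have rank $l$, and hence to be a finite-index sublattice of $L$. Therefore $\textup{ind}(F)=[L:L_0]$ is a positive (in particular nonzero) integer.

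Finally I would conclude by the standard formula for covolumes of sublattices. Because $r_1$ restricted to $L$ is a lattice embedding into $\mathbb{R}^{d_1}$, the index of $r_1(L_0)$ inside $r_1(L)$ equals $[L:L_0]=\textup{ind}(F)$, and so
\[
V_1(F) \;=\; \textup{covol}\bigl(r_1(L_0)\bigr) \;=\; [r_1(L):r_1(L_0)]\cdot \textup{covol}\bigl(r_1(L)\bigr) \;=\; \textup{ind}(F)\cdot R_1(F).
\]
Rearranging gives $R_1(F)=V_1(F)/\textup{ind}(F)$.

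The only nontrivial input is the injectivity of $r_1$ modulo torsion (Borel's theorem, quoted in \S\ref{section:define}); everything else is lattice bookkeeping. There is no serious obstacle, so the proof is essentially a short deduction from the definitions once the hypothesis $V_1(F)\neq 0$ is promoted to linear independence of the $z_{u_k}$.
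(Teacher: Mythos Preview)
Your proof is correct. The paper does not give an explicit proof of this theorem; it is stated without a proof environment, presumably as an immediate consequence of the definitions together with Borel's theorem (quoted in \S\ref{section:define}). The key step---that $V_1(F)\neq 0$ forces the $z_{u_k}$ to be linearly independent and hence to span a finite-index subgroup of $K_3(F)$ modulo torsion---is already spelled out in the remark immediately preceding the theorem, and your argument is a faithful elaboration of exactly that reasoning plus the standard covolume-of-a-sublattice formula.
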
  

In particular, whenever $\textup{ind}(F)=1$ (that is, whenever the $z_{u_k}$ generate $K_3(F)$ modulo torsion), we are able to compute the Borel regulator $R_1(F)$.  Thus if criteria were found to determine for which cyclotomic fields $\textup{ind}(F)=1$, then this approach would allow one to compute the Borel regulator for those fields.

\begin{thm} \label{mothergoose} For a cyclotomic field $F= \mathbb{Q}(\omega)$, we may compute $V_1(F)$ in terms of the universal Borel class $b_1$:
\[
	V_1(F) = \left|\det \left(b_1 \left([(qZ_1-Z_2(q))|_{t=\xi^{v_jv_k}}]\right)\right)_{j,k}\right|
\]
where $Z_1$ is the universal chain (Definition \ref{def:Z1}) and $Z_2(q)$ as in Definition \ref{Z2}.
\end{thm}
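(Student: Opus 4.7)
The plan is to unwind the definition of $V_1(F)$ and then apply, essentially verbatim, the coordinate formula supplied by the lemma preceding Lemma \ref{Humphrey}. By Definition of $V_1(F)$ in \S\ref{section:basis}, $V_1(F)$ is the covolume of the lattice in $\mathbb{R}^{d_1}$ generated by the images of the $z_{u_k}$ ($k=1,\dots,l$) under the Borel regulator map. Since $d_1=l$ and we have chosen the explicit basis $\{\psi_j\otimes i -\overline{\psi}_j\otimes i\}_{j=1}^{l}$ to identify $(X_F\otimes \mathbb{R}(1))^c$ with $\mathbb{R}^{d_1}$, this covolume is computed as $|\det(L_{jk})_{j,k}|$, where $(L_{jk})_{j,k}$ is precisely the matrix of coordinates introduced in \S\ref{section:basis} (with the usual convention that $|\det|$ is zero when the generators fail to be linearly independent).

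Next I would invoke the lemma that computes these coordinates explicitly, namely $L_{jk}=b_1([Z|_{t=\xi^{v_j v_k}}])/i$ with $Z=qZ_1-Z_2(q)$. Substituting into the covolume formula gives
\[
V_1(F)=\bigl|\det(L_{jk})_{j,k}\bigr|=\left|\det\!\left(\tfrac{1}{i}\,b_1([Z|_{t=\xi^{v_j v_k}}])\right)_{j,k}\right|=|i|^{-l}\left|\det\!\left(b_1([Z|_{t=\xi^{v_j v_k}}])\right)_{j,k}\right|,
\]
and since $|i|^{-l}=1$, pulling the scalar $1/i$ out of each row and taking the absolute value leaves exactly the stated expression with $Z=qZ_1-Z_2(q)$.

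There is no real obstacle: both preceding lemmas have done the substantive work. The only point requiring a word of comment is consistency of the normalizations. The class $b_n$ lives in cohomology with coefficients in $\mathbb{R}(n)=(2\pi i)^n\mathbb{R}$, so for $n=1$ the values $b_1([Z|_{t=\xi^{v_j v_k}}])$ are purely imaginary; division by $i$ lands us in $\mathbb{R}$, consistent with writing the covolume as a real positive number. The outer absolute value in the statement makes the factor of $i^{-l}$ harmless and frees the formula from any sign or phase ambiguity, so the final identity is simply a restatement of the covolume of the lattice spanned by the coordinate vectors $(L_{jk})_{j}$.
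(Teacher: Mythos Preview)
Your proof is correct and follows essentially the same approach as the paper: the covolume is $|\det(L_{jk})|$, the preceding lemma gives $L_{jk}=b_1([Z|_{t=\xi^{v_jv_k}}])/i$, and the factor of $i$ disappears under the absolute value. The paper's proof is simply a terser version of what you wrote.
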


\begin{proof}We know that  $V_1(F)$ is given by the absolute value of the determinant of the matrix $\left(L_{jk}\right)_{j,k}$ of coordinates of the images of the $z_{u_k}$. We need only note that factors of $i$ do not affect the absolute value of the determinant.
\end{proof}

Thus to compute $V_1(F)$ we must evaluate the $b_1([qZ_1-Z_2(q)]|_{t=\xi^{v_jv_k}})$.  This comprises two independent stages which are dealt with in Sections \ref{section:FreeFoxDifferentiation} and \ref{SectionTwo}.

Firstly, we need to explicitly construct the chain $qZ_1-Z_2(q)$.  As $Z_2(q)$ is given explicitly (Definition \ref{Z2}) it remains to find a representative of the universal chain $Z_1$.  Motivated by ideas in knot theory, in Section \ref{section:FreeFoxDifferentiation} we devise techniques for constructing elements in the bar resolution of a group, with specified boundary.  Using this, in Section \ref{4} we obtain an expression for $Z_1$. This expression consists of 6844 terms
 (chains of the form $(g_1|g_2|g_3)$ with each $g_i \in M_5(\mathbb Z[t,t^{-1}])$) and it is available as a PDF (for visualisation) or Matlab (for compuation) file\footnote{see ancillary files \textsf{anc/universalchain.tex} and \textsf{anc/universalchain.mat}}.  Although the construction of this expression is an intricate procedure (explained in \S4), one may easily calculate its boundary with the help of a computer and thus independently verify that $d'(Z_1)= [a|b]-[b|a]$, ie.~$Z_1$ is indeed a representative of the universal chain (cf.~Definition \ref{def:Z1}).

Secondly, in Section \ref{SectionTwo} we will show how the universal Borel class $b_n$ of \S\ref{section:define} can be computed for arbitrary $n\geq 1$ and any number field by making the Karoubi-Hamida integral \cite{Ha00} explicit.  In particular we describe a formula for computing $b_1$ (Theorem \ref{thm:FinalSeries}). 

%at request from the authors in Maple format.%\todo{check last two paragraphs}

%\sout{Based on the results in this article (specifically the definition of $V_1(F)$, the procedure in Section 4 for constructing $Z_1$ and Theorem \ref{thm:FinalSeries}) code was implemented by the $1^{\rm st}$ author in the course of producing his doctoral thesis \cite{ZackyPhD}.  This gave an estimate of $V_1(F)$ in the case $q=3$ (see Appendix \ref{section:Numerical}).}

%\todo[inline]{Add self-contained definition of $V_1(F)$\\
%Let $F = \mathbb{Q}(\omega)$ be a cyclotomic field, where $\omega$ is a $q^\text{th}$ root of unity. Let $l = \frac{\phi(n)}{2}$ and $u_k, \overline{u_k}$, $1\le k \le l$ be the $l$ pairs of primitive $q^\text{th}$ roots of unity. Let $b_1 \colon H_3(\GL(\mathbb{C})) \to \mathbb{R}(1)$ be the universal Borel class of \S\ref{section:define}. Let $Z_1, Z_2$ the chains of \S\ref{3.0}. Denote by $|_{t=u}$ evaluation $A=\mathbb{Z}[t,t^{-1}] \to F$ at $u\in F$. Let $\mathcal{Z}_k$ the cycle $qZ_1-Z_2 |_{t=u_k} \in B_3$. Then
%$$
%	V_1(F) = \left|\det \left(b_1(\mathcal{Z}_k) \right)_{j,k}\right|.	
%$$
%Uhm \ldots something missing \ldots too tired
%}
%\begin{remark}
%\textcolor{red}{In Section \ref{SectionTwo} we will describe an algorithm to compute $b_n(\mathcal{Z})$ to an arbitrary precision, for any homology class $\mathcal{Z} \in H_{2n+1}(GL(\mathbb{C}))$. In particular, we have implemented the algorithm for $n=1$ (\S\ref{section:formula3}).}
%\end{remark}

\medskip
\begin{remark} \label{zedone}
An important point to note is that the universal chain $Z_1$ is defined independently of the integer $q$.  That is, having once computed it (as we do in Section \ref{4}), we may use it to compute $V_1(F)$ for any cyclotomic field $F$, merely by substituting in different roots of unity for the indeterminate $t$.

\end{remark}

\medskip
\begin{remark} \label{Subsection:HighDim}
To compute $R_n(F)$ for $n>1$ we would need to construct a basis modulo torsion of $K_{2n+1}(F)$. 
A potential idea for generalizing our methods in the $n=1$ case  is to let $S=F[x_{0}, \ldots , x_{r}]/(x_{0}x_{1} \ldots x_{r}( 1  -  \sum_{i=0}^{r} \ x_{i}))$, the coordinate ring of an algebraic sphere over $F$.  Then if we could construct elements in $K_3(S)$ we could  apply the natural homomorphism $K_3(S) \to KV_3(S)$ to obtain elements in Karoubi-Villamayor K-theory (\cite[\S3]{Weib82}).
There is an isomorphism \cite{DW80}:  $ KV_{3}(S)  \cong  K_{3}(F) \oplus  K_{3+r}(F)$. Projecting onto the second summand would yield elements of $K_{3+r}(F)$.
One may speculate that a procedure along these lines could be devised to find a basis (modulo torsion) for $K_{3+r}(F)$.  This would allow our  low dimensional group homology methods (\S\ref{2.6}) to be used for computing $R_n$, $n>1$.
\end{remark}

\section{Boundary relations in the bar resolution}\label{section:FreeFoxDifferentiation} 

As mentioned earlier, part of the intangibleness of the Borel Regulator of a field $F$ comes from the difficulty of explicitly constructing elements in the $K$-theory of $F$ in the required form: their images under the Hurewicz map represented by chains in the bar resolution of $GL(F)$.  In this section we describe a novel approach to doing precisely that, based on ideas from knot theory (specifically the concept of a free derivative).

In the Section \ref{4}, we will apply these general methods to the case of explicitly constructing the universal chain $Z_1$ (Definition \ref{def:Z1}), a necessary step in the calculation of $V_1(F)$ for a cyclotomic field, as explained before.

\subsection{A free Fox type derivative}\label{subsection:FreeFox}

Let $G$ be a discrete group and write $B_nG$ for the degree $n$ part of the inhomogeneous  bar resolution \cite[\S6.5]{W}. Therefore $B_nG$ is the free left ${\mathbb Z}[G]$-module with basis consisting of $n$-tuples $[g_{1} | g_{2} | \ldots | g_{n}]$ with each $g_{i} \in G$. 
The boundary map is given by
\begin{eqnarray*}
d( [g_{1} | g_{2} | \ldots | g_{n}]) &= &  g_{1}[g_{2} | \ldots | g_{n}] + \sum_{i=1}^{n-1}  \ (-1)^{i} [g_{1} | g_{2} | \ldots | g_{i}g_{i+1} | \ldots | g_{n}]   \\
& + & (-1)^{n} [g_{1} | g_{2} | \ldots | g_{n-1}] .  
\end{eqnarray*} 
Thus in particular the boundary map $B_3G \to B_2G$ is
\begin{eqnarray*}
d( [g_{1} | g_{2} |  g_{3}]) & = &  g_{1}[g_{2} |  g_{3}]    - [g_{1}  g_{2} |  g_{3}] +[g_{1} | g_{2}   g_{3}]-[g_1|g_2] .
\end{eqnarray*}

The study of knot theory motivated the idea of a free differential \cite{F60}:  a map from a set of words to an abelian object, satisfying a variant of Leibniz's rule.  We now construct the relevant such differential.  Let $F_G$ denote the free group on symbols $s_x$ with $x \in G$ and let $\phi \colon F_G \rightarrow G$ be the group homomorphism mapping $\phi(s_x) \mapsto  x$.  

\begin{df}\label{2.1}
There exists a free derivative:
\[   \partial\colon F_G \longrightarrow     {\mathbb Z}  \otimes_{{\mathbb Z}[G]}   B_{\,2}G    \] 
uniquely characterized by the properties:
\begin{itemize}
\item[(i)]  \  $\partial( e )=0$    \hspace{10pt}   ($e$ is the identity element of $F_G$) and

\item[(ii)]  \    $\partial( u s_x) = \partial(u) +   1   \otimes_{{\mathbb Z}[G]} [ \phi(u) | x ] $ for $u \in F_{G}$.
\end{itemize}
\end{df}

\begin{lem}{$_{}$}\label{2.2}
For $1 \leq i \leq r$ suppose $x_{i} \in G$ and that $\epsilon_{i} = \pm 1$. Set $z_{i} = x_{i}^{-1}$ if $\epsilon_{i}=-1$ and $z_{i} = 1$ otherwise. Then for any map $\partial$ satisfying (i) and (ii) as above, we have:
\begin{eqnarray}  
\partial(s_{x_{1}}^{\epsilon_{1}} s_{x_{2}}^{\epsilon_{2}} \ldots s_{x_{r}}^{\epsilon_{r}})
=   \sum_{i=1}^{r}  \  \epsilon_{i}    \otimes_{{\mathbb Z}[G]}   [ x_{1}^{\epsilon_{1}} x_{2}^{\epsilon_{2}} \ldots x_{i-1}^{\epsilon_{i-1}}z_{i} | x_{i} ].  \label{eqn:FoxSum}
\end{eqnarray}
Thus $\partial$ is uniquely defined on $F_{G}$, as claimed in the definition.
\end{lem}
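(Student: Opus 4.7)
I would prove this by induction on the word-length $r$. The base case $r=0$ is the empty word, which represents the identity $e \in F_G$; property (i) gives $\partial(e)=0$, matching the empty sum on the right-hand side of (\ref{eqn:FoxSum}).

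For the inductive step, set $u = s_{x_{1}}^{\epsilon_{1}} \cdots s_{x_{r-1}}^{\epsilon_{r-1}}$, so that $\phi(u) = x_{1}^{\epsilon_{1}} \cdots x_{r-1}^{\epsilon_{r-1}}$, and split on the sign of $\epsilon_r$. If $\epsilon_r = +1$, property (ii) applies directly to $u \cdot s_{x_r}$: it yields $\partial(u\, s_{x_r}) = \partial(u) + 1 \otimes_{\mathbb{Z}[G]} [\phi(u) \mid x_r]$. Since $z_r = 1$ in this case, combining the inductive hypothesis for $\partial(u)$ with this new term produces exactly the $r$-term sum in (\ref{eqn:FoxSum}).

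The case $\epsilon_r = -1$ is the main obstacle, because property (ii) only tells us how to append a positive generator, not $s_{x_r}^{-1}$. The idea is to turn the relation around: writing $u = (u\, s_{x_r}^{-1}) \cdot s_{x_r}$ and applying (ii) to this factorization gives
\[
\partial(u) = \partial(u\, s_{x_r}^{-1}) + 1 \otimes_{\mathbb{Z}[G]} [\phi(u\, s_{x_r}^{-1}) \mid x_r],
\]
and then $\phi(u\, s_{x_r}^{-1}) = \phi(u)\, x_r^{-1} = x_{1}^{\epsilon_{1}} \cdots x_{r-1}^{\epsilon_{r-1}} x_r^{-1}$. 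Solving for $\partial(u\, s_{x_r}^{-1})$ and using the inductive hypothesis for $\partial(u)$ gives the first $r-1$ terms of (\ref{eqn:FoxSum}) together with $-1 \otimes [x_{1}^{\epsilon_{1}} \cdots x_{r-1}^{\epsilon_{r-1}} x_r^{-1} \mid x_r]$. Since $\epsilon_r = -1$ and $z_r = x_r^{-1}$, this is precisely the $i=r$ summand.

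The small technical point to be careful about is well-definedness in the free group: any element of $F_G$ has a unique reduced expression, so the decomposition $w = u\, s_{x_r}^{\epsilon_r}$ used in the induction is unambiguous and the recursion determines $\partial$ on every reduced word. If one wanted to allow non-reduced expressions, one would additionally need to check that the right-hand side of (\ref{eqn:FoxSum}) is invariant under cancellation of an adjacent pair $s_x s_x^{-1}$; a short direct computation shows the two corresponding summands cancel, confirming consistency. Uniqueness of $\partial$ then follows immediately, since properties (i) and (ii) force the value on every element of $F_G$ to be the one given by (\ref{eqn:FoxSum}).
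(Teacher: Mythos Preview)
Your proof is correct and follows essentially the same approach as the paper: derive the rule $\partial(u\,s_x^{-1}) = \partial(u) - 1\otimes_{\mathbb{Z}[G]}[\phi(u)x^{-1}\mid x]$ from property (ii) by writing $u = (u\,s_x^{-1})\,s_x$, then induct on word length, and finally check that formula~(\ref{eqn:FoxSum}) is invariant under insertion or deletion of adjacent pairs $s_y s_y^{-1}$, $s_y^{-1}s_y$ to confirm well-definedness on $F_G$. The only cosmetic difference is that you first phrase well-definedness via unique reduced words and then mention the cancellation check, whereas the paper goes straight to the cancellation check.
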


\begin{proof}
 Firstly, note that (ii) implies 
\[ \partial( u) = \partial(u s_x^{-1} s_x ) = \partial(u s_x^{-1}) + 
1   \otimes_{{\mathbb Z}[G]}  [ \phi(u)x^{-1}  |  x ] \]
which means that
\[   \partial(u s_x^{-1}) = \partial( u) -  1   \otimes_{{\mathbb Z}[G]}  [ \phi(u)x^{-1}  |  x ]  .  \]
Induction on $r$ gives that 
$ \partial(s_{x_{1}}^{\epsilon_{1}} s_{x_{2}}^{\epsilon_{2}} \ldots s_{x_{r}}^{\epsilon_{r}})$ must be given by (\ref{eqn:FoxSum}).  It remains to verify that (\ref{eqn:FoxSum}) yields a well defined map $\partial\colon F_G \rightarrow     {\mathbb Z}  \otimes_{{\mathbb Z}[G]}   B_{\,2}G$, satisfying (i) and (ii).  

Two words represent the same element of $F_G$ precisely when they differ by a series of insertions and deletions of strings $s_ys_y^{-1}$ and $s_y^{-1}s_y$.  Direct calculation shows that (\ref{eqn:FoxSum}) is independent of such insertions or deletions.  Finally, direct calculation verifies that (\ref{eqn:FoxSum}) satisfies (i) and (ii).
\end{proof}

\begin{df}\label{2.3}
Given a word $w$ written in the form:
\[ w =  u_1 (s_{x_1} s_{y_1} s_{x_1 y_1}^{-1})^{n_1} u_1^{-1}   u_2 (s_{x_2} s_{y_2} s_{x_2 y_2}^{-1})^{n_2} u_2^{-1} \ldots  u_k (s_{x_k} s_{y_k} s_{x_k y_k}^{-1})^{n_k} u_k^{-1}   \]
with $x_i,y_i \in G$, $u_i \in F_G$ and $n_i \in \mathbb{Z}$, we define $W(w) \in    {\mathbb Z}   \otimes_{{\mathbb Z}[G]}  B_3(G) $ by the formula
\[ W(w) = \sum_{i=1}^k    \   n_i    \otimes_{{\mathbb Z}[G]}   [\phi( u_i)  | x_i  | y_i  ]  .  \]
Note $W$ is not a well defined function on $F_{G}$ (unlike the free derivative which is).
\end{df}

\begin{lem}{$_{}$} \label{2.4}  
The boundary of such a chain is given by the formula 
\[ (1 \otimes_{{\mathbb Z}[G]} d)(W(w)) =   \left({\sum_{i=1}^k   \   n_i \otimes_{_{{\mathbb Z}[G]}}  
[x_i | y_i]}\right) - \partial(w)   .  \]
\end{lem}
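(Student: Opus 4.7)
The plan is to compute both sides directly and compare, exploiting the fact that every ``relator'' block in $w$ has trivial image under $\phi$, so that ``prefix'' group elements pile up predictably.

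First, I would establish a product rule: for $u,v \in F_{G}$,
\[
  \partial(uv) = \partial(u) + T_{\phi(u)}\bigl(\partial(v)\bigr),
\]
where $T_{g}\colon \mathbb{Z} \otimes_{\mathbb{Z}[G]} B_{2}G \to \mathbb{Z} \otimes_{\mathbb{Z}[G]} B_{2}G$ is the $\mathbb{Z}$-linear map sending $1 \otimes [h_{1}|h_{2}]$ to $1 \otimes [gh_{1}|h_{2}]$ (well defined since the $1 \otimes [h_{1}|h_{2}]$ form a $\mathbb{Z}$-basis of the tensor product). This is immediate from formula (\ref{eqn:FoxSum}) applied to the concatenation $s_{x_{1}}^{\epsilon_{1}} \cdots s_{x_{r}}^{\epsilon_{r}} s_{y_{1}}^{\delta_{1}} \cdots s_{y_{s}}^{\delta_{s}}$: the first $r$ terms reproduce $\partial(u)$, while the last $s$ terms reproduce $T_{\phi(u)}\partial(v)$. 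As a consequence, $\partial(e)=0$ yields $T_{\phi(u)}\partial(u^{-1}) = -\partial(u)$.

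Next I would reduce $\partial(w)$ to a sum over the blocks. Writing $c_{i} = (s_{x_{i}}s_{y_{i}}s_{x_{i}y_{i}}^{-1})^{n_{i}}$, one has $\phi(c_{i}) = e$ and hence $\phi(u_{i}c_{i}u_{i}^{-1})=e$. Iterated use of the product rule gives $\partial(w) = \sum_{i} T_{\phi(p_{i})}\partial(u_{i}c_{i}u_{i}^{-1})$ where $p_{i}$ is the prefix of $w$ before the $i$-th block; but $\phi(p_{i})=e$ since each earlier block lies in $\ker\phi$, so $T_{\phi(p_{i})}$ is the identity. Within a single block,
\[
  \partial(u_{i}c_{i}u_{i}^{-1}) = \partial(u_{i}) + T_{\phi(u_{i})}\partial(c_{i}) + T_{\phi(u_{i}c_{i})}\partial(u_{i}^{-1}) = T_{\phi(u_{i})}\partial(c_{i}),
\]
using $\phi(c_{i})=e$ and the cancellation $T_{\phi(u_{i})}\partial(u_{i}^{-1}) = -\partial(u_{i})$.

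It remains to compute $\partial(c_{i})$. Since $\phi(s_{x_{i}}s_{y_{i}}s_{x_{i}y_{i}}^{-1}) = e$, the product rule gives $\partial(c_{i}) = n_{i}\,\partial(s_{x_{i}}s_{y_{i}}s_{x_{i}y_{i}}^{-1})$, and a direct three-term expansion yields
\[
  \partial(c_{i}) = n_{i}\bigl([e|x_{i}] + [x_{i}|y_{i}] - [e|x_{i}y_{i}]\bigr).
\]
Applying $T_{\phi(u_{i})}$ and summing,
\[
  \partial(w) = \sum_{i=1}^{k} n_{i}\bigl([\phi(u_{i})|x_{i}] + [\phi(u_{i})x_{i}|y_{i}] - [\phi(u_{i})|x_{i}y_{i}]\bigr).
\]

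Finally, I would compute $(1\otimes d)(W(w))$ from the formula for the boundary $B_{3}G \to B_{2}G$. Expanding $d[\phi(u_{i})|x_{i}|y_{i}]$ and using that $1\otimes \phi(u_{i})[x_{i}|y_{i}] = 1\otimes[x_{i}|y_{i}]$ in $\mathbb{Z}\otimes_{\mathbb{Z}[G]}B_{2}G$, one gets
\[
  (1\otimes d)(W(w)) = \sum_{i} n_{i}\bigl([x_{i}|y_{i}] - [\phi(u_{i})x_{i}|y_{i}] + [\phi(u_{i})|x_{i}y_{i}] - [\phi(u_{i})|x_{i}]\bigr),
\]
and recognising the last three terms (with overall sign) as $-\partial(w)$ finishes the proof. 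There is no serious obstacle here; the only care needed is the bookkeeping of the prefix operator $T_{g}$ and the repeated use of $\phi(c_{i})=e$ to guarantee that these prefixes collapse exactly as required.
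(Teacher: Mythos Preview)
Your proof is correct and follows essentially the same route as the paper's: both establish the product rule $\partial(uv)=\partial(u)+T_{\phi(u)}\partial(v)$, use $\phi$ of each block $u_i c_i u_i^{-1}$ being trivial to reduce $\partial(w)$ to a sum over blocks, compute each block contribution, and then compare with the direct computation of $(1\otimes d)W(w)$. The only cosmetic difference is that the paper computes $\partial(u_i c_i u_i^{-1})$ by expanding $u_i$ letter-by-letter via formula~(\ref{eqn:FoxSum}) and observing term-by-term cancellation, whereas you obtain the same cancellation more cleanly from the product rule and $T_{\phi(u_i)}\partial(u_i^{-1})=-\partial(u_i)$.
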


\begin{proof}
From (\ref{eqn:FoxSum}), we may make the following observation: \begin{eqnarray}  
\partial(uv)=\partial(u)+ \phi(u)\cdot \partial(v),\label{Lieb} \end{eqnarray}   
where for $g,h,k \in G$, it is understood that $g \cdot 1 \otimes_{_{{\mathbb Z}[G]}}  
[h | k]=1 \otimes_{_{{\mathbb Z}[G]}}  
[gh | k] $.

In the case of a segment  $u_i (s_{x_i} s_{y_i} s_{x_i y_i}^{-1})^{n_i} u_i^{-1}$, we have that:
\[   \phi(u_i (s_{x_i} s_{y_i} s_{x_i y_i}^{-1})^{n_i} u_i^{-1}) = \phi(u_i ) ({x_i} {y_i} {(x_i y_i)}^{-1})^{n_i} \phi(u_i )^{-1}   = e   \]

\noindent Regarding $w$ as the product of such segments and applying (3) we then have:
\[ \partial(w) = \sum_{i=1}^k  \   n_i   \partial(u_i (s_{x_i} s_{y_i} s_{x_i y_i}^{-1}) u_i^{-1}).   \] 
Let $u_i$ be written out as $\prod_{j=1}^{l_i}
s_{z_j}^{m_j}$, where $m_j$ is either 1 or $-1$.  Then by (\ref{eqn:FoxSum})
\begin{eqnarray*}
\lefteqn{\partial(u_i (s_{x_i} s_{y_i} s_{x_i y_i}^{-1}) u_i^{-1})}  \\
&=& \sum_{j  |   {m_j=1}}  \   1 \otimes_{{\mathbb Z}[G]}  [  \prod_{p=1}^{j-1}    z_p
^{m_p} | z_j ]   -    \sum_{j   |    {m_j=-1}}  \    1 \otimes_{{\mathbb Z}[G]}  [\prod_{p=1}^{j}    z_p ^{m_p} | z_j ]    \\
 &+& 1 \otimes_{{\mathbb Z}[G]} [\phi( u_i)  | x_i ] +  1 \otimes_{{\mathbb Z}[G]}   [ \phi( u_i) x_i  |  y_i]
  -  1 \otimes_{{\mathbb Z}[G]}    [\phi(u_i)  |  x_i y_i ]  \\
 &-& \sum_{j |  {m_j=1}} \    1 \otimes_{{\mathbb Z}[G]}  [  \prod_{p=1}^{j-1}    z_p
^{m_p}  |  z_j ]   +   \sum_{j  |  {m_j=-1}}   \   1 \otimes_{{\mathbb Z}[G]}  [ 
\prod_{p=1}^{j}    z_p ^{m_p}   |  z_j ]    \\
&=&1 \otimes_{{\mathbb Z}[G]}  [\phi( u_i)  | x_i ] +  1 \otimes_{{\mathbb Z}[G]}   ( \phi( u_i) x_i  |  y_i]
  -  1 \otimes_{{\mathbb Z}[G]}    [\phi(u_i)  |  x_i y_i ] .
\end{eqnarray*}
Thus we have
\begin{eqnarray*}
 \partial(w) 
  &=& \sum_{i=1}^k  \  n_i(1 \otimes_{{\mathbb Z}[G]}  [\phi( u_i)  | x_i ] +  1 \otimes_{{\mathbb Z}[G]}   [( \phi( u_i) x_i  |  y_i]
  -  1 \otimes_{{\mathbb Z}[G]}    [\phi(u_i)  |  x_i y_i ] ) 
\end{eqnarray*}
and
\vspace{-2ex}
\begin{eqnarray*}
(1 \otimes_{{\mathbb Z}[G]} d)(W(w)) &=& \sum_{i=1}^k  \  n_i(1 \otimes_{{\mathbb Z}[G]}  [x_i  | y_i ]- 1 \otimes_{{\mathbb Z}[G]}   [( \phi( u_i) x_i  |  y_i]\\
&+&      1 \otimes_{{\mathbb Z}[G]}    [\phi(u_i)  |  x_i y_i ] -1 \otimes_{{\mathbb Z}[G]}  [\phi( u_i)  | x_i ])\\ 
&=& \left(\sum_{i=1}^k  \  n_i(1 \otimes_{{\mathbb Z}[G]}  [x_i  | y_i ]\right) - \partial(w)
\end{eqnarray*}
as required. 
\end{proof}

\subsection{Constructing boundary relations}\label{2.6}

We now describe a method, based on Lemma \ref{2.4} for constructing boundary relations: identities of the form $(1 \otimes_{{\mathbb Z}[G]} d)\alpha = \beta$, for $\alpha \in  {\mathbb Z}   \otimes_{{\mathbb Z}[G]}  B_3(G)$ and $\beta \in  {\mathbb Z}   \otimes_{{\mathbb Z}[G]}  B_2(G)$.  

\begin{lem} \label{lemma:RelForm} Given $u \in F_G$ we may write 
\[u =  (s_{x_1} s_{y_1} s_{x_1 y_1}^{-1})^{n_1}  (s_{x_2} s_{y_2} s_{x_2 y_2}^{-1})^{n_2}  \ldots   (s_{x_k} s_{y_k} s_{x_k y_k}^{-1})^{n_k} s_{\phi(u)} \]
where $x_i,y_i \in G$.
\end{lem}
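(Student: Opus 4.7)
I would prove this lemma by induction on the length of $u$ as a word in the generators $s_x^{\pm 1}$ of $F_G$.

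For the base case $u = e$ (empty word), note that $\phi(u) = e$ and we can write
\[ e = (s_e s_e s_e^{-1})^{-1} s_e, \]
which has the required form with $k=1$, $x_1 = y_1 = e$, $n_1 = -1$. (Alternatively one may take $u = s_x^{\pm 1}$ as the base case, where either $s_x = s_{\phi(s_x)}$ directly, or $s_x^{-1} = (s_x s_{x^{-1}} s_e^{-1})^{-1} s_e s_x s_{x^{-1}}^{-1} \cdots$; but the empty-word base case is cleanest.)

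For the inductive step, write $u = u' s_x^{\epsilon}$ with $\epsilon = \pm 1$, and apply the inductive hypothesis to $u'$ to get
\[ u' = C' \cdot s_{\phi(u')}, \]
where $C'$ is a product of terms of the form $(s_{x_i} s_{y_i} s_{x_i y_i}^{-1})^{n_i}$. The task is then to absorb the final $s_x^{\epsilon}$ into an additional cocycle factor while replacing $s_{\phi(u')}$ by $s_{\phi(u)}$.

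If $\epsilon = +1$, set $y = \phi(u')$ so that $\phi(u) = yx$. Then
\[ s_y s_x = \bigl(s_y s_x s_{yx}^{-1}\bigr) s_{yx} = \bigl(s_y s_x s_{\phi(u)}^{-1}\bigr)^{1} s_{\phi(u)}, \]
giving $u = C' \bigl(s_y s_x s_{\phi(u)}^{-1}\bigr) s_{\phi(u)}$ as required. If $\epsilon = -1$, then $\phi(u') = \phi(u) x$, and a direct computation gives
\[ s_{\phi(u')} s_x^{-1} = s_{\phi(u) x} s_x^{-1} = \bigl(s_{\phi(u)} s_x s_{\phi(u) x}^{-1}\bigr)^{-1} s_{\phi(u)}, \]
so $u = C' \bigl(s_{\phi(u)} s_x s_{\phi(u) x}^{-1}\bigr)^{-1} s_{\phi(u)}$, again of the required form.

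There is no real obstacle: the whole content is the observation that appending one letter $s_x^{\pm 1}$ to a word already in the desired normal form can be compensated by attaching exactly one more cocycle factor at the right, which shifts the trailing $s_{\phi(u')}$ to $s_{\phi(u)}$. The statement is purely formal and involves no interaction with the group structure of $G$ beyond the multiplication appearing inside the subscripts.
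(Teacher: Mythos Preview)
Your proof is correct and essentially identical to the paper's: both argue by induction on the length of $u$, use the base case $e=(s_e s_e s_e^{-1})^{-1}s_e$, and handle the two cases $u=u's_x^{\pm 1}$ by exactly the same cocycle manipulations (the paper phrases the induction as a minimal-counterexample argument, but the content is the same).
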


\begin{proof}
For contradiction let $l$ be the smallest integer such that there is some $u\in F_G$ of length $l$ contradicting the lemma.  As $e=(s_es_es_e^{-1})^{-1}s_e$, we know $l>0$.  Then either $u=u's_a$ or $u=u's_a^{-1}$ with $u'$ of length $l-1$ and $a \in G$.  Thus either 
$$u=Ps_{(\phi(u')}s_a= P(s_{(\phi(u')}s_a s_{\phi(u)}^{-1})s_{\phi(u)}$$
or 
$$u=Ps_{(\phi(u')}s_a^{-1}=P(s_{(\phi(u')}s_a^{-1}  s_{\phi(u)}^{-1})s_{\phi(u)}=P(s_{\phi(u)}s_a  s_{(\phi(u')}^{-1})^{-1}s_{\phi(u)},$$ 
where $P$ is a product of $(s_{x_i} s_{y_i} s_{x_i y_i}^{-1})^{n_i}$.
\end{proof}

\begin{df}\label{2.5}
A \emph{relator} is an element of ${\rm Ker}(\phi \colon  F_{G} \longrightarrow  G)$.
\end{df}

\begin{lem} \label{lem:rel}Given a relator $R \in F_G$ we may write 
\[R =  (s_{x_1} s_{y_1} s_{x_1 y_1}^{-1})^{n_1}  (s_{x_2} s_{y_2} s_{x_2 y_2}^{-1})^{n_2}  \ldots   (s_{x_k} s_{y_k} s_{x_k y_k}^{-1})^{n_k}\,.  \]
\end{lem}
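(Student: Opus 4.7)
The plan is to derive this as an immediate corollary of Lemma \ref{lemma:RelForm}. Applying that lemma to $R$ directly yields a factorisation
\[ R = (s_{x_1} s_{y_1} s_{x_1 y_1}^{-1})^{n_1} (s_{x_2} s_{y_2} s_{x_2 y_2}^{-1})^{n_2} \cdots (s_{x_k} s_{y_k} s_{x_k y_k}^{-1})^{n_k} \, s_{\phi(R)}. \]
Since $R$ is a relator, by Definition \ref{2.5} we have $\phi(R) = e$, so the trailing factor is exactly $s_e$.

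The only remaining point is to absorb this stray $s_e$ into a factor of the prescribed shape. For this I would exploit the special case $x = y = e$ of the factor type: with $x_{k+1} = y_{k+1} = e$ (so that $x_{k+1} y_{k+1} = e$) and $n_{k+1} = 1$, one gets
\[ \bigl(s_{x_{k+1}} s_{y_{k+1}} s_{x_{k+1} y_{k+1}}^{-1}\bigr)^{n_{k+1}} = s_e s_e s_e^{-1} = s_e \]
as an equality in the free group $F_G$. Appending this extra factor to the previous product gives the desired expression for $R$ (now with $k+1$ factors and no trailing $s_{\phi(R)}$).

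I do not expect any real obstacle here, since all the substantive work is already carried out in Lemma \ref{lemma:RelForm}; the present statement is a cosmetic specialisation which merely observes that when $\phi(u) = e$ the extra factor $s_{\phi(u)}$ is redundant, being itself of the standard form $s_{x} s_{y} s_{xy}^{-1}$ with $x = y = e$.
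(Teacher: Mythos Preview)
Your proof is correct and follows essentially the same approach as the paper: apply Lemma \ref{lemma:RelForm}, use $\phi(R)=e$ to reduce the trailing factor to $s_e$, and then rewrite $s_e = s_e s_e s_e^{-1}$ as a factor of the required form.
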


\begin{proof}
We have 
\begin{eqnarray*}
\qquad \quad R &=&  (s_{x_1} s_{y_1} s_{x_1 y_1}^{-1})^{n_1}  (s_{x_2} s_{y_2} s_{x_2 y_2}^{-1})^{n_2}  \ldots   (s_{x_k} s_{y_k} s_{x_k y_k}^{-1})^{n_k}s_e \\ &=& (s_{x_1} s_{y_1} s_{x_1 y_1}^{-1})^{n_1}  (s_{x_2} s_{y_2} s_{x_2 y_2}^{-1})^{n_2}  \ldots   (s_{x_k} s_{y_k} s_{x_k y_k}^{-1})^{n_k}(s_es_es_e^{-1}).  \qquad \quad \qedhere 
\end{eqnarray*}
\end{proof}

Now let $C_1,\cdots,C_{k+1}$, $C_1',\cdots C_{l+1}'$  be products of commutators of the form $[R,u]^{\pm 1}=(RuR^{-1}u^{-1})^{\pm 1}$, where $R$ is a relator and $u\in F_G$.  Given an identity in $F_{G}$ of the form
\begin{multline}
\lefteqn{C_1v_1(s_{x_1}s_{y_1}s_{x_1y_1}^{-1})^{n_1}v_1^{-1} \cdots C_kv_k(s_{x_k}s_{y_k}s_{x_ky_k}^{-1})^{n_k}v_k^{-1} C_{k+1}}\\
 = C_1'v_1'(s_{x_1'}s_{y_1'}s_{x_1'y_1'}^{-1})^{n_1'}v_1'^{-1} \cdots C_l'v_l'(s_{x_l'}s_{y_l'}s_{x_l'y_l'}^{-1})^{n_l'}v_l'^{-1} C_{l+1}'
\label{eqn:FreeId}
\end{multline}
we may use Lemma \ref{lem:rel} to express each relator $R$ as a product of $(s_a s_b s_{ab}^{-1})^m$ and each $uR^{-1}u^{-1}$ as a product of $u(s_a s_b s_{ab}^{-1})^m u^{-1}$.  Thus we may express the left and right hand sides of 
(\ref{eqn:FreeId}) as words $w_1$, $w_2$ respectively, to which we may apply $W$.  We get
\[ 
	(1 \otimes_{{\mathbb Z}[G]} d)(W(w_1)) =   \left({\sum_{i=1}^k   \   n_i \otimes_{_{{\mathbb Z}[G]}}  
	[x_i | y_i]}\right) - \partial(w_1) 
\]
and
\[ 
	(1 \otimes_{{\mathbb Z}[G]} d)(W(w_2)) =   \left({\sum_{i=1}^l   \   n_i' \otimes_{_{{\mathbb Z}[G]}}  
	[x_i' | y_i']}\right) - \partial(w_2) 
\] 
as the remaining terms coming from each relator $R$ are canceled by the corresponding terms from each $uR^{-1}u^{-1}$.  

From (\ref{eqn:FreeId}) we have that $w_1=w_2$ as elements of $F_G$, so  $\partial(w_1) -\partial(w_2)=0$.
Thus:

\begin{thm} \label{BoundaryRelation}
We have a boundary relation:
\begin{eqnarray*} 
(1 \otimes_{{\mathbb Z}[G]} d)(W(w_1)-W(w_2)) =    \left({\sum_{i=1}^k   \   n_i \otimes_{_{{\mathbb Z}[G]}}  
[x_i | y_i]}\right) - \left({\sum_{i=1}^l   \   n_i' \otimes_{_{{\mathbb Z}[G]}} [x_i' | y_i']}\right) .
\end{eqnarray*}
\end{thm}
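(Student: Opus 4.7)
The plan is to execute the outline sketched in the paragraph immediately preceding the statement: turn each side of the identity \eqref{eqn:FreeId} into a word in the standard form of Definition \ref{2.3}, apply $W$ to each, and then compare the two instances of Lemma \ref{2.4}. So I would begin by using Lemma \ref{lem:rel} to expand every relator $R$ appearing in a commutator $[R,u]=RuR^{-1}u^{-1}$ as a product of triples $(s_a s_b s_{ab}^{-1})^m$; the inverse $R^{-1}$ expands to the inverse product, and conjugation by $u$ produces triples of the form $u'(s_a s_b s_{ab}^{-1})^{m'}u'^{-1}$. Once all relators are rewritten this way, the left and right sides of \eqref{eqn:FreeId} become words $w_1, w_2$ of precisely the shape covered by Definition \ref{2.3}, so $W(w_1)$ and $W(w_2)$ make sense.

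Next I would apply Lemma \ref{2.4} to each of $w_1, w_2$. The sum $\sum n\otimes[x|y]$ produced by the lemma now decomposes into three types of contributions: (a) the designated triples $(s_{x_i}s_{y_i}s_{x_iy_i}^{-1})^{n_i}$ and $(s_{x_i'}s_{y_i'}s_{x_i'y_i'}^{-1})^{n_i'}$ that already appear in \eqref{eqn:FreeId}; (b) triples obtained from expanding each relator $R$ inside a commutator $[R,u]$; and (c) triples obtained from the conjugate $uR^{-1}u^{-1}$. The key observation is that the triples coming from (b) and (c) form paired packets carrying opposite signs, and although they differ by a conjugating element, after passing to $\mathbb{Z}\otimes_{\mathbb{Z}[G]} B_2(G)$ the left $G$-action is quotiented out and the two packets cancel exactly. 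Consequently only the type-(a) terms survive, giving
\[
(1\otimes d)(W(w_j)) \;=\; \Big(\textstyle\sum n \otimes [x|y]\text{ for side } j\Big) - \partial(w_j) \qquad (j=1,2).
\]

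Finally, since $w_1$ and $w_2$ are equal as elements of $F_G$ by hypothesis \eqref{eqn:FreeId}, and since Lemma \ref{2.2} established that $\partial$ descends to a well-defined map on $F_G$ (not merely on words), we have $\partial(w_1)=\partial(w_2)$. Subtracting the two displayed identities then yields the claimed formula for $(1\otimes d)(W(w_1)-W(w_2))$.

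The only step that needs genuine care is the cancellation in the second paragraph: one must verify both that the triples arising from the expansion of $R$ and of $uR^{-1}u^{-1}$ contribute with opposite signs under the $W$-construction, and that the differing conjugators become irrelevant after tensoring with $\mathbb{Z}$ over $\mathbb{Z}[G]$. This is a routine but slightly intricate bookkeeping exercise; the rest of the argument is a direct assembly of Lemmas \ref{2.2}, \ref{2.4}, and \ref{lem:rel}.
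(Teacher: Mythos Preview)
Your proposal is correct and follows essentially the same route as the paper: expand the relators via Lemma~\ref{lem:rel}, apply Lemma~\ref{2.4} to each side, observe that the triple-contributions from $R$ and from $uR^{-1}u^{-1}$ cancel in the sum $\sum n_i\otimes[x_i|y_i]$, and then use $\partial(w_1)=\partial(w_2)$ to finish. One small sharpening: the cancellation does not really require invoking the quotient by the $G$-action---the pairs $(x_i,y_i)$ arising from a triple $(s_as_bs_{ab}^{-1})^{m}$ and from its conjugate-inverse $u(s_as_bs_{ab}^{-1})^{-m}u^{-1}$ are literally the same $(a,b)$ with exponents $m$ and $-m$, since the conjugator $u_i$ never enters the $[x_i|y_i]$ term of Lemma~\ref{2.4}.
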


\subsection{Examples}\label{subsection:examples}

Let $G$ be a group and let $x,y \in G$ commute.  Then $w=[s_x,s_y]$ is a relator and we have $w=(s_xs_ys_{xy}^{-1})(s_ys_xs_{yx}^{-1})^{-1}$.  Then $W(w)= 1    \otimes_{{\mathbb Z}[G]}   [e  | x  | y  ] - 1    \otimes_{{\mathbb Z}[G]}   [e  | y  | x  ]$ and \[(1 \otimes_{{\mathbb Z}[G]}  d)W(w)= 1    \otimes_{{\mathbb Z}[G]}   [ x  | y  ] - 1    \otimes_{{\mathbb Z}[G]}   [ y  | x  ] - \partial(w).\]  We use the notation  $\{ x ,  y \}$ to denote $1  \otimes_{{\mathbb Z}[G]}   [x|y]   -  1  \otimes_{{\mathbb Z}[G]}   [y|x]$. 

\begin{ex}\label{eg:IdCom}
As our first example we consider the identity \[(s_es_ys_y^{-1})(s_ys_es_y^{-1})^{-1} =[s_e,s_y].\]  Letting $w_1,w_2$ denote the left and right sides of this identity as before, we get
\[
	W(w_1)-W(w_2)= 1    \otimes_{{\mathbb Z}[G]}   [e  | e  | y  ] - 1    \otimes_{{\mathbb Z}[G]}   [e  | y  | e  ] - 1    \otimes_{{\mathbb Z}[G]}   [e  | e  | e  ] + 1    \otimes_{{\mathbb Z}[G]}   [y  | e  | e  ]\,. 
\]   
Thus by Theorem \ref{BoundaryRelation}
\[
	(1 \otimes_{{\mathbb Z}[G]}  d)(W(w_1)-W(w_2))= 1    \otimes_{{\mathbb Z}[G]}   [ e  | y  ]
-1    \otimes_{{\mathbb Z}[G]}   [ y  | e  ]= \{ e ,  y \}\,.
\]
In fact  $(1 \otimes_{{\mathbb Z}[G]}  d)\left(1    \otimes_{{\mathbb Z}[G]}   [e  | e | y  | e  ] + 1    \otimes_{{\mathbb Z}[G]}   [e |e  | e  | e  ]\right)
=1    \otimes_{{\mathbb Z}[G]}   [e  | y  | e  ] + 1    \otimes_{{\mathbb Z}[G]}   [e  | e  | e  ]$.   So adding this boundary of a 4-chain to $W(w_1)-W(w_2)$ leaves the boundary relation
\[  (1 \otimes_{{\mathbb Z}[G]}  d)\left(1    \otimes_{{\mathbb Z}[G]}   [ e | e  | y  ]
+1    \otimes_{{\mathbb Z}[G]}   [ y  | e | e  ]\right) = \{ e ,  y \}.\]
\end{ex}

\begin{ex}\label{2.7}
Let $x, y,c \in G$ satisfy $[x,c]=[y,c]=e$.  Consider the identity
\[   s_{x}  [  s_{y} , s_{c}] s_x^{-1}  [ s_{x} , s_{c}] = [  s_{x} s_{y}, s_c]=(s_xs_ys_{xy}^{-1})[s_{xy},s_c]s_c(s_xs_ys_{xy}^{-1})^{-1}s_c^{-1}.  \]
Let $w_1,w_2$ denote the left and right hand sides of this identity. Then
\begin{eqnarray*}
 W(w_1)    &=& 1  \otimes_{{\mathbb Z}[G]}  [x|y|c]  -   1  \otimes_{{\mathbb Z}[G]}  [x|c|y)]  
      +  1   \otimes_{{\mathbb Z}[G]}   [e|x|c]   -   1   \otimes_{{\mathbb Z}[G]}  [e|c|x]  
\end{eqnarray*}
and
\begin{eqnarray*}
W(w_2)    &=&      1  \otimes_{{\mathbb Z}[G]}  [e|xy|c]  -  1   \otimes_{{\mathbb Z}[G]}  [e|c|xy] 
       +  1  \otimes_{{\mathbb Z}[G]}   [e|x|y]    -  1  \otimes_{{\mathbb Z}[G]}  [c|x|y] .
\end{eqnarray*}
Thus by Theorem \ref{BoundaryRelation} we have the boundary relation
\[  ( 1  \otimes_{{\mathbb Z}[G]}  d)(W(w_1)-W(w_2)) =   \{ x ,  c \} +  \{ y ,  c \} -  \{ xy ,  c \}  . \]
Adding the boundary of the 4-chain \[
 1   \otimes_{{\mathbb Z}[G]}  [e | x | y | c] +1   \otimes_{{\mathbb Z}[G]}  [e | c |x | y] - 1  \otimes_{{\mathbb Z}[G]}  [e | x | c | y]
\]
to $W(w_1)-W(w_2)$ we get a 3-chain with the same boundary:
\begin{align*}
 ( 1  \otimes_{{\mathbb Z}[G]}  d)\left( 1  \otimes_{{\mathbb Z}[G]}  [x|y|c]  -   1  \otimes_{{\mathbb Z}[G]}  [x|c|y)]    +  1  \otimes_{{\mathbb Z}[G]}   [c|x|y] \right)   \\
   =   \{ x ,  c \} +  \{ y ,  c \} -  \{ xy ,  c \}  .  
\end{align*}
\end{ex}

\medskip

\begin{ex}\label{2.9}
Let $x, y, c \in G$ satisfy $cxc^{-1} = y, cyc^{-1} = x, xy = yx$.  Then we have
\begin{eqnarray*}
\lefteqn{s_{c}(s_{x} s_{y} s_{xy}^{-1}) (  s_{y} s_{x} s_{yx}^{-1})^{-1}  s_c^{-1}}\\
&=& [ (s_{c}s_{x}s_{cx}^{-1})(s_ys_cs_{yc}^{-1})^{-1} ,   s_{y} s_{c} s_{y}s_{c}^{-1} s_{y}^{-1}  ]
( s_{y}s_{x}s_{yx}^{-1}) ( s_{x} s_{y} s_{xy}^{-1})^{-1} \\ 
&& [ s_{x} s_{y}s_{x}^{-1} , ( s_{c} s_{y}s_{cy}^{-1})(s_xs_cs_{xc}^{-1})^{-1}  ]  .
\end{eqnarray*}
Again let $w_1,w_2$ denote the left and right hand sides of this identity.  Theorem \ref{BoundaryRelation} gives \[    ( 1  \otimes_{{\mathbb Z}[G]}  d)(W(w_1) - W(w_2)) =  2  \{ x ,  y \}\,.   \]
Applying $W$ to $w_1,w_2$ gives
\begin{eqnarray*}
 W(w_1)   &=&  1  \otimes_{{\mathbb Z}[G]}  [c|x|y]  -   1  \otimes_{{\mathbb Z}[G]}  [c|y|x)]   \\
W(w_2)    &=&       1  \otimes_{{\mathbb Z}[G]}  [e|c|x]  -  1   \otimes_{{\mathbb Z}[G]}  [e|y|c]  \\
& & -\  1  \otimes_{{\mathbb Z}[G]}   [x|c|x]    +  1  \otimes_{{\mathbb Z}[G]}  [x|y|c] \\
&&+\   1  \otimes_{{\mathbb Z}[G]}   [e|y|x]    -  1  \otimes_{{\mathbb Z}[G]}  [e|x|y] \\
&&+\  1  \otimes_{{\mathbb Z}[G]}   [y|c|y]    -  1  \otimes_{{\mathbb Z}[G]}  [y|x||c] \\
&& - \  1  \otimes_{{\mathbb Z}[G]}   [e|c|y]    +  1  \otimes_{{\mathbb Z}[G]}  [e|x|c] \,.
\end{eqnarray*}
We have a 4-chain 
\begin{eqnarray*}
   T &=& 1  \otimes_{{\mathbb Z}[G]}  [e|y|x|c] +1  \otimes_{{\mathbb Z}[G]}[e|x|c|x] +1  \otimes_{{\mathbb Z}[G]}[e|c|x|y] \\
&& -\ 1  \otimes_{{\mathbb Z}[G]}  [e|x|y|c] -1  \otimes_{{\mathbb Z}[G]}[e|y|c|y] -1  \otimes_{{\mathbb Z}[G]}[e|c|y|x]\,.
\end{eqnarray*}
Let
\begin{eqnarray*}
P &=&  W(w_1) - W(w_2) +( 1  \otimes_{{\mathbb Z}[G]}  d)T\\
&=& 1\!  \otimes_{{\mathbb Z}[G]} \! [c | x|y] - 1\!  \otimes_{{\mathbb Z}[G]}\!  [x | y | c]  -  1 \! \otimes_{{\mathbb Z}[G]}\!  [c | y | x] \\ 
&& +\  1\!  \otimes_{{\mathbb Z}[G]}\!  [y | x | c]  + 1\!  \otimes_{{\mathbb Z}[G]}\! [x | c | x ]  
  - 1 \! \otimes_{{\mathbb Z}[G]}\![y | c | y] .   
\end{eqnarray*}
\noindent Thus we have a boundary relation $( 1  \otimes_{{\mathbb Z}[G]}  d)(P)= 2\{x,y\}$.
\end{ex}

\section{Constructing the universal chain}\label{4}

As in \S\ref{3.0} let $A=\mathbb{Z}[t,t^{-1}]$ and $a=g_{12}^t$, $b=g_{13}^{-t}$.  Our aim in \S\ref{3.1} is to describe the idea behind the construction of the universal chain $Z_1$, which satisfies $d'(Z_1)=[a|b]-[b|a]$.  Then in \S\ref{3.2} we describe the actual process of constructing it.  Recall (Remark \ref{zedone} on page \pageref{zedone}) that once constructed, we may use it to compute $V_1(F)$ for any cyclotomic field $F$, by substituting in the relevant roots of unity for the indeterminate $t$.

\subsection{The strategy for the universal chain}\label{3.1}

In order to construct this boundary relation we will employ the method given in \S\ref{2.6} and used in Examples \ref{eg:IdCom}, \ref{2.7}, \ref{2.9} of \S\ref{subsection:examples}.  In this case our group is $E(A)$ and we seek an identity in the letters of the free group generated by elements of $E(A)$: 
\begin{align} C_1(s_as_b s_{ab}^{-1})(s_bs_a s_{ab}^{-1})^{-1}C_2=C_3 \label{eqn:BigCycle}\end{align}
where, as before $C_1,C_2,C_3$ are products of commutators $[R,u]^{\pm 1}$, with $R$ a relator.

In this subsection we will describe the idea behind the construction of (\ref{eqn:BigCycle}).  Then in \S\ref{3.2} we will go though the stages in its construction.

For $i \neq j$ and $\mu \in A$, let $E_{ij}^\mu \in E(A)$ differ from the identity in $E(A)$ in the $(i,j)^{\rm th}$ entry only, which is $\mu$.  Let $F_E\!\subset\! F_{E(A)}$ be the subgroup generated by the $s_{E_{ij}^\mu }$.

The Steinberg group $St(A)$ is defined to be the group generated by letters $X_{ij}^{\mu}$, subject to the Steinberg relations $S_{ij}^{\mu,\nu}= T_{ijk}^{\mu,\nu}=U_{ijkl}^{\mu,\nu}=e$, where
\[
\begin{array}{rcll}
S_{ij}^{\mu,\nu} &=& (X_{ij}^{\mu+\nu})^{-1} X_{ij}^\mu X_{ij}^\nu,   & i \neq j \\
T_{ijk}^{\mu,\nu} &=&X_{ij}^\mu X_{jk}^\nu (X_{ij}^{\mu})^{-1} (X_{jk}^{\nu})^{-1} (X_{ik}^{\mu\nu})^{-1},&  i,j,k \, {\rm distinct}\\
U_{ijkl}^{\mu,v} &=&X_{ij}^\mu X_{kl}^\nu (X_{ij}^{\mu})^{-1}
(X_{kl}^{\nu})^{-1},&  i \neq l, j \neq k, i \neq j, k \neq l .
\end{array}
\]

The homomorphism $\psi\colon St(A) \to E(A)$ mapping $X_{ij}^{\mu} \mapsto E_{ij}^{\mu}$ is clearly surjective and its kernel $K_2(A)$ is central in $St(A)$ (\cite[p.40, Theorem 5.1]{M71}).  Thus given $x,y \in E(A)$ and preimages $x',y' \in St(A)$, the commutator $[x',y']$ is independent of the choice of preimages, and may be denoted $\{x,y\}$ (this differs from our earlier notation).

The first step in our construction is to express $\{a,b\}$ explicitly as a product of the $X_{ij}^{\mu}$.  This is done by writing $a$ and $b$ as products of the $E_{ij}^{\mu}$, then replacing each $E_{ij}^{\mu}$ with $X_{ij}^{\mu}$, to get $a', b' \in St(A)$.  Then $\{a,b\}$ is represented by the word $[a',b']$.

In  \cite[ pp. 71--75]{M71} it is shown that $\{a,b\}=e$ in $St(A)$.   This means that we have an identity in the free group generated by the letters $X_{ij}^\mu$:
\begin{eqnarray}
[a',b']= (w_1 R_1 ^{\pm 1} w_1^{-1} ) (w_2 R_2 ^{\pm 1} w_2^{-1} ) \cdots (w_m R_m ^{\pm 1} w_m^{-1} ) \label{eqn:SteinId}
\end{eqnarray}
where the $R_i$ are of the form $S_{ij}^{\mu,\nu}$, $T_{ijk}^{\mu,\nu}$ or $U_{ijkl}^{\mu,\nu}$.

Let $R_E \subset F_E$ be the subgroup ker$(\phi\vert_{F_E})$, and let $\hat{\theta}\colon F_E \to St(A)$ be given by $s_{E_{ij}^\mu} \mapsto X_{ij}^\mu$.  Then $\hat{\theta}$ is surjective and by construction the following diagram commutes:\[
\xymatrix{
F_E\ar[dr]_-{\phi\vert_{F_E}}\ar[r]^-{\hat{\theta}}&St(A)\ar[d]^-\psi\\
&E(A)\,.
}\]
Hence $\hat\theta(R_E) \subset {\rm ker}(\psi)$ which is, as we have said, central.  Hence $\hat\theta([F_E,R_E])=\{e\}$ so $\hat \theta$ induces a well defined map $\theta\colon F_E/[F_E,R_E] \to St(A)$ mapping $s_{E_{ij}^\mu} \mapsto X_{ij}^\mu$.  

Note that $[F_E,R_E]\subset R_E$ so $\phi_{F_E}$ induces a well defined map $\phi \colon F_E/[F_E,R_E]\to E(A)$ and the following diagram also commutes:\[
\xymatrix{
F_E/[F_E, R_E]\ar[dr]_-{\phi}\ar[r]^-{{\theta}}&St(A)\ar[d]^-\psi\\
&E(A)
}\]
where $\phi$ here is understood to denote the map induced by the restriction.

The kernel of $\theta$ is contained in ker$({\psi\theta})=R_E/[F_E,R_E] $, so is central in $F_E/[F_E,R_E]$.  Thus $\theta$ is a central extension.  From \cite[pp.48--51]{M71}  applied to $\theta$ we get:

\begin{lem}  

i) The element $[s_{E_{ik}^1}, s_{E_{kj}^\mu}]\in F_E/[F_E,R_E]$ is independent of $k\neq i,j$.

 ii) The map $\theta$ has a section $c$, given by $c(X_{ij}^\mu)= [s_{E_{ik}^1}, s_{E_{kj}^\mu}]$, $k\neq i,j$.
\end{lem}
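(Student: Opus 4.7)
The plan is to mimic Milnor's classical argument for the Steinberg group (\cite[pp.~48--51]{M71}), treating the central extension structure of $\theta$ as the essential input. Since $\ker\theta = R_E/[F_E, R_E]$ is central in $F_E/[F_E, R_E]$, any lift of a Steinberg-group computation to $F_E/[F_E, R_E]$ is well-defined up to a central element; crucially, each of the universal relators $S_{ij}^{\mu,\nu}$, $T_{ijk}^{\mu,\nu}$, $U_{ijkl}^{\mu,\nu}$ maps to $\ker\theta$, so it can be moved past any other element in a commutator computation without penalty.

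For part (i), given two admissible indices $k, k'$, both distinct from $i$ and $j$, I would show that $[s_{E_{ik}^1}, s_{E_{kj}^\mu}]$ and $[s_{E_{ik'}^1}, s_{E_{k'j}^\mu}]$ agree in $F_E/[F_E, R_E]$. The standard Milnor trick introduces a fourth index (possible as long as there are at least four rows available, which is ensured by $GL$ being taken in the colimit) and rewrites one commutator as the other by a sequence of Hall--Witt style manipulations: in $St(A)$ one has the identity $X_{ij}^\mu = [X_{ik}^1, X_{kj}^\mu]$ for every valid $k$, and this equality can be traced through the free group $F_E$ by manipulating only Steinberg relators. Each such relator maps to $\ker\theta$, so the two candidate expressions differ in $F_E/[F_E, R_E]$ by a product of commutators of elements with relators, all of which vanish modulo $[F_E, R_E]$.

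For part (ii), I would verify that the assignment $X_{ij}^\mu \mapsto [s_{E_{ik}^1}, s_{E_{kj}^\mu}]$ (well-defined by (i)) respects each of the three families of Steinberg relations. Each verification is a direct commutator computation in $F_E/[F_E, R_E]$: expand both sides, use centrality of $\ker\theta$ to commute the pieces past one another, and reduce the difference to a product of Steinberg relators, which is trivial. That the resulting $c$ is a section of $\theta$ is then immediate, since $\theta\bigl([s_{E_{ik}^1}, s_{E_{kj}^\mu}]\bigr) = [X_{ik}^1, X_{kj}^\mu] = X_{ij}^\mu$ by the Steinberg relation $T_{ikj}^{1,\mu}$.

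The main obstacle is the combinatorial bookkeeping: one has to be careful to track which identities hold on the nose in $F_E$ and which hold only modulo $[F_E, R_E]$, and to justify every appeal to centrality by exhibiting a relator on one side of each commutator. Once this bookkeeping is arranged as in Milnor, both parts follow mechanically; the content of the lemma is essentially that the classical construction of the Steinberg section survives the passage from $St(A)$ to the universal central extension $F_E/[F_E, R_E]$.
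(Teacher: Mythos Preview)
Your proposal is correct and matches the paper's approach exactly: the paper does not give its own proof but simply states ``From \cite[pp.~48--51]{M71} applied to $\theta$ we get'' the lemma, which is precisely the Milnor argument you outline. Your sketch is in fact more detailed than what the paper provides.
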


In particular $c$ respects the Steinberg relations.  Thus each $c(R_i)=e\in F_E/[F_E,R_E]$ and may be expressed as a product of commutators $[u,v]^{\pm 1}$, with $v\in R_E$.   

We apply $c$ to (\ref{eqn:SteinId}) and take conjugation by $c(w_i)$ inside the commutators to get
\[[c(a'),c(b')]= C_3\] 
where $C_3$ has the required form of a product of commutators, each involving a relator.  We have $\phi c (a')=\psi\theta c(a')=\psi(a')=a$ and $\phi c (b')=\psi\theta c (b')=\psi(b')=b$, so $K_a=c(a')s_a^{-1}$ and $K_b=c(b')s_b^{-1}$ are relators.  We have
\[ [c(a'), c(b') ] =  [K_a s_a, K_b s_b]=[K_a, s_a s_bs_a^{-1}](s_a s_b
s_{ab}^{-1})(s_b s_a s_{ab}^{-1})^{-1}[s_b K_a s_a s_b^{-1},K_b]. \] 
Let $C_1=[K_a, s_a s_bs_a^{-1}]$ and $C_2=[s_b K_a s_a s_b^{-1},K_b]$.   Then we have constructed (\ref{eqn:BigCycle}) as
\begin{eqnarray} C_1(s_a s_b
s_{ab}^{-1})(s_b s_a s_{ab}^{-1})^{-1}C_2=[c(a'), c(b') ]=C_3. \label{eqn:LongWord}\end{eqnarray}

\subsection{Computing the free group identity }\label{3.2}
We shall now describe how to implement the strategy of \S\ref{3.1}.  The first step is to find $a',b' \in St(A)$.  For $\lambda$ a unit in $A$, let $Y_{ij}^\lambda =
X_{ij}^\lambda X_{ji}^{-(\lambda^{-1})} X_{ij}^\lambda$.  Factorizing $g_{ij}^\lambda$ into matrices of the form $E_{ij}^\mu$ and replacing each $E_{ij}^\mu$ with $X_{ij}^\mu$, we get $Y_{ij}^\lambda Y_{ij}^{-1}$, which we will denote $h_{ij}^\lambda$.  Thus $\psi(h_{ij}^\lambda)=g_{ij}^\lambda$ and in particular $a'=h_{12}^t, b'=h_{13}^{-t}$.

In \cite[pp.~71--75]{M71}, it is shown that the commutator $[h_{12}^t,h_{13}^{-t}]$ may be reduced via the Steinberg relations to $e$.  This proof depends on the identities $h_{13}^{-t}Y_{12}^\lambda (h_{13}^{-t})^{-1} = Y_{12}^{-t \lambda}$
for $\lambda = t$ or $-1$, and $Y_{12}^t Y_{12}^{-1} Y_{12}^{-t}
= Y_{12}^{-t^2}$, which themselves depend on several identities proved in \cite[Lemma 9.2]{M71}.  The proof of each of these is given by taking words in the $X_{ij}^\mu$ and simplifying them using the Steinberg identities. 

Thus the entire proof may be written out as a sequence of equalities in the Steinberg group, where at each step a simplification is made using one of the Steinberg identities.  Of course such a proof would be extremely long, as each step in proving an identity needs to be repeated every time the identity is used to prove a consequential identity.  If the consequential identity is used several times  to prove an identity higher up the chain of consequences then one can appreciate how the length of such a proof grows exponentially with the length of the proof given in \cite{M71}.

Thus we have a long chain of equalities in the Steinberg group $[h_{12}^t,h_{13}^{-t}]=\cdots=e$, where at each step we essentially factor off a conjugate of one of the relators $S_{ij}^{\mu,\nu}$, $T_{ijk}^{\mu,\nu}$,  $U_{ijkl}^{\mu,\nu}$ or their inverses.   Next we must write the entire proof out again, this time not suppressing these factors, so we have a sequence of equalities in the free group on the letters $X_{ij}^\mu$.  This long nested sequence of operations, together with the vast amount of data needed to store all these factors, made it natural to employ a computer to construct the resulting identity:
\begin{eqnarray} [a',b']=[ h_{12}^{t}, h_{13}^{-t} ] = \cdots= \prod_{i=1}^m (w_i R_i ^{\pm 1}  w_i^{-1} ) \label{eqn:ProofCom} \end{eqnarray}
where the $R_i$ are words of the form $S_{ij}^{\mu,\nu}$, $T_{ijk}^{\mu,\nu}$ or $U_{ijkl}^{\mu,\nu}$ and $m = 392$. The sequence of letters in the product would fill about 40 pages. We note that none of the
relations used required us to include extra indices, so only 1, 2, and 3
were used.

We next apply the homomorphism $c$.  Explicitly, this means replacing each $X_{ij}^{\mu}$ in (\ref{eqn:ProofCom}) by $[s_{E_{i4}^{1}}, s_{E_{4j}^\mu} ]$.  This is now a free group identity in $F_E\subset F_{E(A)}$:
\begin{eqnarray} [c(a'),c(b')]= \prod_{i=1}^m (c(w_i) c(R_i) ^{\pm 1}  c(w_i)^{-1} )\,. \label{eqn:FreeIdCom} \end{eqnarray}

From \cite[pp.~48-51]{M71} we know that $c\colon St(A) \to F_E/[F_E,R_E]$ is a well defined homomorphism.  
Thus as the words $S_{ij}^{\mu,\nu}$, $T_{ijk}^{\mu,\nu}$ and $U_{ijkl}^{\mu,\nu}$ represent $e \in  St(A)$, we have that the words $c(S_{ij}^{\mu,\nu}), c(T_{ijk}^{\mu,\nu}),c(U_{ijkl}^{\mu,\nu}) \in F_E$ represent  $e \in F_E/[F_E,R_E]$.

The proofs of these three identities \cite[pp.49-51]{M71} can be written as a sequence of equalities in $F_E/[F_E,R_E]$, where at each step we factor off a word in $[F_E,R_E]$.  As before, by not suppressing these factors we obtain identities in the free group $F_E$, equating the $c(S_{ij}^{\mu,\nu}), c(T_{ijk}^{\mu,\nu}),c(U_{ijkl}^{\mu,\nu}) \in F_E$ with elements of $[F_E,R_E]$.  For example:
\begin{eqnarray*}
c(U_{ijkl}^{\mu,\nu})=\left[ [s_{E_{i4}^{1}}, s_{E_{4j}^\mu} ], [s_{E_{k4}^{1}}, s_{E_{4l}^\nu} ]\right]\qquad  \qquad \qquad\qquad\qquad\qquad\qquad\qquad\\
=\quad [ L', s_{E_{ij}^\mu} [s_{E_{k4}^1}, s_{E_{4l}^\nu}]( s_{E_{ij}^\mu})^{-1}] 
\qquad [L'',s_{E_{k4}^1} [s_{E_{ij}^\mu}, s_{E_{4l}^\nu}]s_{E_{4l}^\nu} (s_{E_{k4}^1})^{-1}]\quad\\  \qquad
[L''', s_{E_{k4}^1} s_{E_{4l}^\nu}(s_{E_{k4}^1})^{-1} (s_{E_{4l}^\nu})^{-1} (s_{E_{k4}^1})^{-1} ]
\end{eqnarray*}
where $L'$ is the relator $[s_{E_{i4}^1}, s_{E_{4j}^\mu}] (s_{E_{ij}^{\mu}})^{-1}$, $L''$ is the relator 
$[s_{E_{ij}^\mu}, s_{E_{k4}^1}]$ and $L'''$ is the relator $s_{E_{k4}^1}[s_{E_{ij}^\mu}, s_{E_{4l}^\nu}](s_{E_{k4}^1})^{-1}$.  Thus $c(U_{ijkl}^{\mu,\nu})$ may be expressed as the product of 3 commutators, each involving a relator.  Similarly, we may derive expressions for $c(S_{ijkl}^{\mu,\nu})$, $c(T_{ijkl}^{\mu,\nu})$ expressing them as the product of 4 and 18 commutators respectively, all involving a relator.    During the expansion of $T_{ijkl}^{\mu,\nu}$ it was necessary to introduce the index 5, so from now on we work with $5 \times 5$ matrices.

Each $c(R_i)$ in (\ref{eqn:FreeIdCom}) may now be expressed as a product:
$c(R_i)= \prod_{j=1}^{m_i} [L_{ij},y_{ij}]^{\pm1}\!$,
so 
\begin{eqnarray*}(c(w_i) c(R_i) ^{\pm 1}  c(w_i)^{-1} )= c(w_i)\left(\prod_{j=1}^{m_i} \left[L_{ij},y_{ij}\right]^{\pm1}\right)^{\pm1}c(w_i)^{-1}\\=
\left(\prod_{j=1}^{m_i} \left[c(w_i)L_{ij}c(w_i)^{-1},\,\,c(w_i)y_{ij}c(w_i)^{-1}\right]^{\pm1}\right)^{\pm 1}
\end{eqnarray*}
and from (\ref{eqn:FreeIdCom})
\begin{eqnarray*}
[c(a'),c(b')] &=& \prod_{i=1}^m\left(\prod_{j=1}^{m_i} \left[c(w_i)L_{ij}c(w_i)^{-1},\,\,c(w_i)y_{ij}c(w_i)^{-1}\right]^{\pm1}\right)^{\pm1}=C_3.
\end{eqnarray*}
Here $C_3$ is the product of 2392 commutators, each involving a lengthy relator and word.  Conversely  $C_1=[K_a, s_a s_bs_a^{-1}]$ and $C_2=[s_b K_a s_a s_b^{-1},K_b]$ are much shorter. 

In order to apply the operation $W$ to both sides of  (\ref{eqn:LongWord}) we must rewrite the relator in each commutator in $C_1,C_2,C_3$ as a product of words of the form $(s_xs_ys_{xy}^{-1})^{\pm1}$.  We may use the inductive process from the proof of Lemma \ref{lemma:RelForm} to do this.  The number of terms of the form  $(s_xs_ys_{xy}^{-1})^{\pm1}$ needed to express each relator will be approximately the length of the relator, and each such term will add 2 terms to the 3-cycle produced by the application of $W$ (one coming from the relator and the other from its inverse).  

With the help of a computer we work through the left and right hand sides of (\ref{eqn:LongWord}) (which we will denote $u_1,u_2$  respectively) applying $W$.  Clearly this would produce millions of terms.  Whenever a new term was produced, we stored it in memory, and thereafter merely kept a running total of the coefficient on it.   Thus we obtained $W(u_2)$ as a 3-chain with 11123 terms, which between them involve 3691 distinct $5 \times 5$ matrices.  Subtracting from the much smaller $W(u_1)$ we apply Theorem \ref{BoundaryRelation} to get:
\[d'\left(W(v_1)-W(v_2)\right)=\{a,b\}.\]

We next ran an algorithm on $W(u_1)-W(u_2)$ searching for boundaries of 4-cycles which could be added or subtracted to shorten it.  Let $Z_1$ denote the result after adding and subtracting those boundaries.  This has merely 6844 distinct terms, involving between them 3265 matrices.   Thus we have attained the desired boundary relation:
\begin{equation}\label{eqn:boundaryZ1}
	d'Z_1=[a|b]-[b|a].
\end{equation}
\begin{remark}
The operator $d'$ was applied to $Z_1$  by computer, to independently verify this identity.
\end{remark}
\begin{remark}  
Clearly it is not possible to include in this article all the steps required to produce our expression for $Z_1$.  However, this expression is available from the authors and one can check that it satisfies Equation (\ref{eqn:boundaryZ1}) (see the end of \S\ref{section:basis}), that is, it is indeed a representative of the universal chain (cf.~Definition \ref{def:Z1}).
\end{remark}

\section{Computing the universal Borel class}\label{SectionTwo}
As an independent result we show how to compute the universal Borel class $b_n$ evaluated in the homology group $H_{2n+1}\left(GL(\mathbb{C})\right)$ by expanding the Karoubi-Hamida integral of \cite{Ha00}. This approach allows the calculation of the Borel regulator map for any number field after evaluation of the Hurewicz homomorphism. In particular for $n=1$ and $F$ a cyclotomic field, we can compute $V_1(F)$ (Theorem \ref{mothergoose}).

%However, to compute the Borel regulator covolume, one would need homology classes in the image of the Hurewicz map corresponding to a basis of the lattice $K_{2n+1}(F)/\text{Tor} \cong \mathbb{R}^{d_n}$. Finding these homology classes is in general a difficult problem. On the other hand, for $n=1$ and $F$ a cyclotomic field, the elements $\mathcal{Z}_u \in H_3(EF)$ described in \S\ref{3.0} are candidates for such homology classes; they lie in the image of the Hurewicz map and hence their covolume $V_1(F)$ is an integer multiple of the Borel regulator $R_1(F)$ (\S\ref{section:basis}).}

This section is organised as follows. We begin by recalling the definition of the Karoubi-Hamida integral (\S\ref{section:HamidaIntegral}). We expand this integral in an arbitrary odd dimension as an infinite series (\S\ref{section:PowerSeriesFormula}). The formula requires a certain matrix $A$ to have norm less than 1; in \S\ref{section:HomologicalTrick} we explain how to guarantee this condition. We then simplify the formula in dimension 3 (\S\ref{section:formula3}) in a way that can be implemented straightaway  in a computer algorithm. 

%Finally we discuss some computational aspects of this algorithm (\S\ref{section:ComputerAlgorithm}).

\subsection{Karoubi-Hamida's integral}\label{section:HamidaIntegral}

By a result of Hamida \cite{Ha00}, the universal Borel class $b_m$ has the following description as an integral of differential forms. Let $n=2m+1$ and let $X_0, \ldots, X_{n} \in GL_N(\mathbb{C})$, for some $N \ge 2n+1$ (cf.~Remark \ref{remark:SuslinStability}). Let $\Delta_n$ be the standard $n$-simplex
\begin{equation}\label{eqn:SimplexInRn1} 
   \Delta_n = \Big\{ (x_0, \ldots, x_{n}) \in \mathbb{R}^{n+1} \;\big|\; x_i \geq 0, \sum_i x_i = 1 \Big\}. 
\end{equation}
Define for every point $\mathbf{x} = (x_0, \ldots, x_{n}) \in \Delta_n$
\begin{equation}\label{eqn:nu}
	\nu(\mathbf{x}) = x_0X_0^*X_0 + \ldots + x_{n}X_{n}^*X_{n}\,,
\end{equation}
where $X^*$ denotes the conjugate transpose of $X$. Thus $\nu$ is a matrix of
0-forms (complex functions) on the $n$-manifold $\Delta_n$.  For any $\mathbf{x}$ and any non-zero vector $\vec{u}\in \mathbb{C}^N$, we have $\vec{u}^* \nu(\mathbf{x})\vec{u}$ a positive real number.
That is  $\nu(\mathbf{x})$ is positive definite hermitian and in particular invertible. %Thus $\nu^{-1}$ is a well-defined matrix of 0-forms on $\Delta_n$.
Consider the matrix of differential $n$-forms
$(\nu^{-1}d\nu)^{n}$, where $\nu^{-1}$ denotes matrix inversion, 
$d$ is the exterior derivative applied to each entry of $\nu$ and we multiply individual differential forms using the wedge product. Define
\begin{equation}
  \label{eq:HamidaIntegral}
    \varphi(X_0, X_1, \ldots, X_n) = \Tr \int_{\Delta^{n}} (\nu^{-1} d\nu)^{n}\,,
\end{equation}
the trace of a matrix of integrals of differential $n$-forms.

\begin{thm}[Hamida \cite{Ha00}] \label{thm:Hamida}
Let $m \ge 1$ and $n = 2m+1$.
The map defined on tuples in the homogeneous bar resolution, sending
\begin{equation}\label{eqn:HamidaCocycle}
    (X_0, \ldots, X_{n}) \,\mapsto \,\frac{(-1)^{m + 1}}{2^{3m+1}(\pi i)^{m}}
    \:\varphi(X^\ast_0, X^\ast_1, \ldots, X^\ast_{n})
\end{equation}
 is a cocycle representing the universal Borel class $b_m \colon H_{2m+1}\left( GL(\mathbb{C}) \right) \to \mathbb{R}(m)$.
%$b_m \in H_c^{n}\left( GL(\mathbb{C}), \mathbb{R}(m)\right)$.
%$b_m\colon  H_{n}(GL_N(\mathbb{C})) \longrightarrow \mathbb{C}$.
\end{thm}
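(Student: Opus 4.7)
The plan has two largely independent parts: first, verify that the formula defines a cocycle on the homogeneous bar complex; second, identify its cohomology class with the universal Borel class $b_m$ as normalised in \cite{BG02}.

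For the cocycle property I would set $\omega = \nu^{-1}d\nu$, a matrix of $1$-forms on a neighbourhood of $\Delta_n$ in the affine hyperplane $\{\sum x_i=1\}$. Differentiating the identity $\nu\nu^{-1}=I$ gives the Maurer--Cartan relation $d\omega=-\omega\wedge\omega$, so the graded Leibniz rule combined with matrix associativity yields
\[
d(\omega^n)\;=\;\sum_{i=0}^{n-1}(-1)^i\omega^i\wedge d\omega\wedge\omega^{n-1-i}\;=\;-\omega^{n+1}\sum_{i=0}^{n-1}(-1)^i,
\]
which, for $n=2m+1$ odd, simplifies to $-\omega^{n+1}$. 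Taking trace and invoking the graded cyclicity $\Tr(\alpha\wedge\beta)=(-1)^{|\alpha|\,|\beta|}\Tr(\beta\wedge\alpha)$ shows that $\Tr(\omega^k)=0$ whenever $k$ is even, so in particular $d\Tr(\omega^n)=0$. To obtain the cocycle identity I would then assemble $(X_0,\dots,X_{n+1})$ into a $\nu$ on $\Delta_{n+1}$ by the same convex-combination formula, note that restriction to the $j$-th face recovers the $\nu$ associated to the $n$-tuple with $X_j$ removed, and apply Stokes' theorem:
\[
0\;=\;\int_{\Delta_{n+1}}d\Tr(\omega^n)\;=\;\int_{\partial\Delta_{n+1}}\Tr(\omega^n)\;=\;\sum_{j=0}^{n+1}(-1)^j\varphi(X_0^*,\dots,\widehat{X_j^*},\dots,X_{n+1}^*),
\]
which is precisely the coboundary of the proposed cochain in the homogeneous bar complex. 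The $GL_N(\mathbb{C})$-equivariance needed to descend to group cohomology follows from the transformation rule $\nu\mapsto g\nu g^*$ combined with cyclicity of the trace; the starred arguments in (\ref{eqn:HamidaCocycle}) are arranged exactly so that the resulting cochain is invariant under the left action on the $X_i$.

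For the identification of the class I would pass through the Van Est isomorphism between continuous cohomology of $GL_N(\mathbb{C})$ and relative Lie algebra cohomology $H^*(\mathfrak{gl}_N(\mathbb{C}),\mathfrak{u}(N);\mathbb{R})$. The assignment $X\mapsto X^*X$ realises the symmetric space of positive definite Hermitian matrices as $GL_N(\mathbb{C})/U(N)$, and $\Tr((\nu^{-1}d\nu)^n)$ is $GL_N(\mathbb{C})$-invariant, so it descends to an invariant top form on each affine simplex in this symmetric space. Integration of such an invariant form against simplices is precisely the chain-level Van Est map, so the class of $\varphi$ corresponds to the primitive invariant polynomial $A\mapsto\Tr(A^n)$ on the tangent space at the basepoint. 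This is the standard generator of the odd primitive part of the cohomology of the symmetric space, and hence represents a non-zero scalar multiple of $b_m$.

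The main obstacle, and the bulk of the work in Hamida's original argument in \cite{Ha00}, lies in pinning down the scalar $(-1)^{m+1}/(2^{3m+1}(\pi i)^m)$. One must reconcile three separate normalisation conventions: Borel's definition of $b_m$ via a primitive class in the cohomology of the compact dual (carrying the $(2\pi i)^m$ twist coming from $\mathbb{R}(m)=(2\pi i)^m\mathbb{R}$), the Karoubi-type conventions used to set up this chain-level cocycle on $BGL$, and a factor of $2$ which enters when one linearises $X\mapsto X^*X$ at the identity and so doubles the infinitesimal generator. Once these conventions are matched, the theorem reduces to the chain-level calculation sketched above.
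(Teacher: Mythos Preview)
The paper does not prove this theorem; it is quoted from Hamida \cite{Ha00} and used as a black box. There is therefore no ``paper's own proof'' to compare against. What you have written is a correct outline of the standard argument behind Hamida's result, and it is the argument one would expect: the cocycle property via the Maurer--Cartan identity $d\omega=-\omega^2$ and Stokes on $\Delta_{n+1}$, the class identification via the Van Est isomorphism and invariant forms on $GL_N(\mathbb{C})/U(N)$, and the normalisation deferred to \cite{Ha00}.

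Two minor points. First, in your displayed Stokes identity you write $\varphi(X_0^*,\dots,\widehat{X_j^*},\dots,X_{n+1}^*)$, but the Stokes computation as you set it up (with $\nu=\sum x_i X_i^*X_i$) proves that $\varphi$ itself is a cocycle; the cocycle property of $(X_0,\dots,X_n)\mapsto\varphi(X_0^*,\dots,X_n^*)$ then follows formally because $(\cdot)^*$ is a bijection of $GL_N(\mathbb{C})$ intertwining the simplicial structure. Second, in your chain of equalities you write $d(\omega^n)=-\omega^{n+1}\sum_{i=0}^{n-1}(-1)^i$, tacitly using that $\omega^i(\omega^2)\omega^{n-1-i}=\omega^{n+1}$ regardless of $i$; this is fine since all factors are the same $\omega$, but it is the step where a careless reader might stumble, so it is worth a word. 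Otherwise your sketch is sound and matches what one finds in the literature around \cite{Ha00} and \cite{BG02}.
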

\begin{remark}
Note that our convention $X_i^\ast{}X_i$ is different from \cite{Ha00}: the definition of $\nu(\mathbf{x})$  there is $\sum_i x_i X_iX_i^\ast$. %For instance, we need to conjugate transpose every $X_i$ in \ref{eqn:HamidaCocycle}.
\end{remark}

\begin{remark}\label{glutus:maximus}
In fact $\varphi$ is an alternating map.
\end{remark}

\begin{remark}\label{rmk:HomoUni}
The cocycle (\ref{eqn:HamidaCocycle}) is homogeneous and unitarily normalized \cite{Ha00}, that is,
\begin{eqnarray}
  \varphi(X_0g, \ldots, X_ng) &=& \varphi(X_0, \ldots, X_n) \quad \textup{for all } g \in GL_N(\mathbb{C}),\label{homogeneous}\\
   \varphi(u_0X_0, \ldots, u_nX_n) &=& \varphi(X_0, \ldots, X_n) \quad \textup{for all } u_i \in U_N(\mathbb{C}).\label{unitary}
\end{eqnarray}
In particular, we can assume $X_n=1$ by (\ref{homogeneous}), and all the $X_i$ to be positive definite hermitian matrices by (\ref{unitary}), via the polar decomposition: every invertible matrix $X$ can be written as $X=UP$ where $U$ is unitary and $P$ is positive definite hermitian. Indeed, $X^*X=P^*P=P^2$; compare with (\ref{eqn:nu}).
\end{remark}

\subsection{The infinite series formula}\label{section:PowerSeriesFormula}
Our goal is to make the computation of the Karoubi-Hamida integral (\ref{eq:HamidaIntegral}) explicit. Namely, we will transform the integral into an infinite series whose value we can arbitrarily approximate. 

\paragraph{Step 1} \textit{Express the integrand in terms of $n$ coordinates rather than $n+1$}

\smallskip 

\noindent We have a homeomorphism from the $n$-simplex in ${\mathbb R}^{n}$
\[  
	\Delta^{n} = \{ (y_{1}, y_{2}, \ldots , y_{n})  \  |  \  y_{i} \geq 0 \  {\rm for \ all } \ i \ ,  \sum_{j=1}^{n} \ y_{j} \leq 1 \}    
\]
to $\Delta_n \subset{\mathbb R}^{n+1 }$ %(\ref{eqn:SimplexInRn1}), 
given by the map 
\[  
	(y_{1}, \ldots , y_{n}) \mapsto ( 1 -  \sum_{j=1}^{n} \ y_{j} , y_{1}, y_{2}, \ldots , y_{n})  .  
\]
Therefore, in terms of the $y$-coordinates, we have
\begin{eqnarray}\label{eqn:nu1}
 	\nu & = &  X_{0}^{*}X_{0} +  y_{1}( X_{1}^{*}X_{1} -  X_{0}^{*}X_{0}) + \ldots   + y_{n} (X_{n}^{*}X_{n} -  X_{0}^{*}X_{0}). 
\end{eqnarray}

\paragraph{Step 2} \textit{Change of variables} \smallskip

\noindent Next, we perform a change of variables by means of the map
$$
	 T \colon [0, 1] \times \Delta^{n-1}  \longrightarrow  \Delta^{n}  
$$
given by
\[ T(t, s_{1}, s_{2}, \ldots , s_{n-1}) =  (  s_{1}t, s_{2}t, \ldots, s_{n-1}t, 1-t). \]

For each fixed non-zero value of $t$ the corresponding horizontal $(n-1)$-simplex is mapped diffeomorphically onto its image,
while $\{ 0 \} \times \Delta^{n-1}$ is collapsed to the vertex $(0,0, \ldots , 0, 1)$. 
The Jacobian of $T$ is equal to $(-1)^{n} t^{n-1}$.
Therefore when $t \not= 0$ we have $J(T) > 0$ if $n$ is even and $J(T) < 0$ if $n$ is odd. For any compact $n$-manifold with boundary $M \subset [0,1] \times \Delta^{n-1}$ having image $T(M)$ in the $n$-simplex and continuous map $f \colon T(M) \to \mathbb{R}$ we have the substitution rule \cite[p.~28]{BT}
\begin{multline*}
 \int_{M}  \  (f \circ T)(t, s_{1}, \ldots , s_{n-1}) \, t^{n-1} \, dt \,ds_{1} \ldots  d\,s_{n-1} \\
 =  \int_{T(M)}  \ f(y_{1}, \ldots , y_{n}) dy_{1} \ldots dy_{n}\,,   
\end{multline*}
since the absolute value of the Jacobian is just $t^{n-1}$.

We shall be interested in the integral  ${\rm Tr} \int_{\Delta^{n}} ((\nu)^{-1} d\nu )^{n} $ which may be written as the limit of integrals over manifolds of the form $T(M)$ as they tend towards the $n$-simplex. (For example, when $M_\alpha=[\alpha,1]\times \Delta^{n-1}$ and $\alpha \to 0^+$.) Therefore we can compute this integral as a limit of corresponding integrals over $M \subset [0,1] \times \Delta^{n-1}$, provided that this limit exists. However the integral over $M$, involving the Jacobian of $T$, is merely the integral over $M$ where $\nu \circ T$ is $\nu$ written in terms of $t, s_{1}, \ldots , s_{n-1}$ and $d\nu$ is also computed in these coordinates. Thus from (\ref{eqn:nu1}) and the definition of $T$ we have
\begin{align*}
 	\nu \circ T & = X_{0}^{*}X_{0} + \sum_{j=1}^{n-1} t s_j (X_j^\ast X_j -  X_0^\ast X_0) + (1-t) (X_n^\ast X_n - X_0^\ast X_0)\\
 		           & = X_n^\ast X_n + t A(s_{1}, \ldots , s_{n-1}),
\end{align*}
where
\begin{align}\label{eqn:MatrixA}
	A(s_{1}, s_{2}, \ldots , s_{n-1})  =  (X_{0}^{*}X_{0} - X_n^\ast X_n) +
\sum_{j=1}^{n-1} \ s_{j}( X_{j}^{*}X_{j} -  X_{0}^{*}X_{0} ) . 
\end{align}
Therefore we shall compute
$$
	\Tr \int_M \left((\nu')^{-1} d\nu' \right)^{n} (-1)^{n}t^{n-1}
$$	
where $\nu'=\nu \circ T$, that is,
$$
	\nu'(t,s_1,\ldots,s_n) = X_n^\ast X_n + t A(s_{1}, \ldots , s_{n-1})
$$
for all $(t,s_1,\ldots,s_{n-1}) \in M$ where $M \subset [0,1] \times \Delta^{n-1}$ is an arbitrary compact $n$-manifold with boundary.

\paragraph{Step 3} \textit{Assume $X_n=1$} \smallskip

\noindent Taking $g = X_n^{-1}$ in  (\ref{homogeneous}) we may from now on assume $X_n = 1$ (we write 1 for the identity matrix whenever there is no possibility of confusion), and hence 
\begin{eqnarray}
	\nu(t, s_1, \ldots, s_{n-1}) &=& 1 + t A(s_1,\ldots, s_{n-1}), \nonumber\\
	A(s_1,\ldots, s_{n-1}) &=& (X_{0}^{*}X_{0} - 1) + \sum_{j=1}^{n-1} \ s_{j}( X_{j}^{*}X_{j} -  X_{0}^{*}X_{0} ) \label{eqn:MatrixASimplified}. 
\end{eqnarray}
\begin{remark}
Alternatively, Proposition \ref{prop:BoundA} allows us to always assume $X_n=1$.
\end{remark}
Therefore $d\nu = dt\, A + t\, dA$, with $dA = \sum_{j=1}^{n-1} \ ds_{j}( X_{j}^{*}X_{j} -  X_{0}^{*}X_{0} )$ and as $M$ varies, we must examine the integral
\begin{equation}\label{eqn:UnaIntegral} 
	\Tr \int_{M} t^{n-1} \left(\left( 1 + t A\right)^{-1} \left(dt\, A + t\, dA\right)\right)^{n} .
\end{equation}

\paragraph{Step 4} \textit{Commuting factors and cyclic permutations} \smallskip

\noindent We have
\begin{multline*}
 \Tr \int_{M} t^{n-1}\left(\left( 1 + t A\right)^{-1} \left(dt\, A + t\, dA\right)\right)^{n}\\  =
\Tr \int_{M} t^{n-1} \left(\left( 1 + t A\right)^{-1} dt\, A +\left( 1 + t A\right)^{-1} t\, dA\right)^{n}.
\end{multline*} 
Write $Y = \left( 1 + t A\right)^{-1} dt\, A$ and $Z = \left( 1 + t A\right)^{-1} t\, dA$.  The 1-form $dt$ commutes with the 0-forms $\left( 1 + t A\right)^{-1}$ and $t$, and anticommutes with the 1-form $dA$.  In the expansion of $(Y+Z)^n$, any monomial involving  more than one $Y$ will vanish, as it contains $dt \, dt =0$. In addition, $Z^n=0$ as it is an $n$-form on $n-1$ variables $s_1, \ldots, s_{n-1}$. Consequently,
\begin{align}\label{eqn:CyclicSum}
	 \Tr \int_{M} \left(Y+Z\right)^n = \sum_{j=0}^{n-1} \Tr \int_{M} Z^jYZ^{n-1-j}.
\end{align}
Next we observe that if $W_{1}, \ldots ,  W_{n}$ are $m \times m$ matrix-valued functions then
\begin{equation}\label{eqn:CyclicPermutation}
	{\rm Tr}\left(W_{1} dx_{1} W_{2} dx_{2} \ldots W_{n} dx_{n}\right)  =  (-1)^{n-1} {\rm Tr}\left( W_{2} dx_{2} \ldots W_{n} dx_{n} W_{1} dx_{1}\right) 
\end{equation}
because the trace of a product of $n$ matrices is invariant under cyclic permutations but the $n$-form $dx_{1} \ldots dx_{n}$ changes by the sign of the $n$-cycle, which is $(-1)^{n-1}$. Accordingly each integral in the sum (\ref{eqn:CyclicSum}) equals $(-1)^{n-1}$ the next one. This implies that if $n$ is even the sum is zero, that is,
\[    
	\varphi(X_0, X_1, \ldots, X_{2n}) = \Tr \int_{\Delta^{2n}} (\nu^{-1} d\nu)^{2n} = 0. 
\]  
For the rest of this section we shall assume that $n \ge 3$ is odd (for the case $n=1$ see Remark \ref{remark:Casen1}). In this case all the summands in (\ref{eqn:CyclicSum}) are equal and consequently
\begin{eqnarray*}
\lefteqn{{\rm Tr} \int_{M} t^{n-1} \left(\left( 1 + t A\right)^{-1} \left(dt\, A + t\, dA\right)\right)^{n}} \\
& = & n \ {\rm Tr} \int_{M} t^{n-1} \left(1 +  t A \right)^{-1}  dt\, A \left(\left(1 + t A\right)^{-1} t\, dA \right)^{n-1}  \\
 & = & n \ {\rm Tr}\int_{M}  t^{2n-2} \left(1 + t A \right)^{-1} dt\, A  \left(\left(1 + t A \right)^{-1}  dA \right)^{n-1}.\label{eqn:nisone}
\end{eqnarray*}
Now $A$ commutes with $1+tA$ and hence with its inverse. Thus the previous integral equals
\begin{multline}
 n \ {\rm Tr}\int_{M}  t^{2n-2} dt\, A  \left(1 + t A \right)^{-1}  \left(\left(1 + t A \right)^{-1}  dA \right)^{n-1}\\
=n \ {\rm Tr}\int_{M}   t^{2n-2} dt\, A  \left(1 + t A \right)^{-2} \left(dA \left(1 + t A \right)^{-1}\right)^{n-2}dA. \label{eqn:LastIntegral}
\end{multline}

\paragraph{Step 5} \textit{Invert $1 + t A$} \medskip

\noindent Recall the geometric series formula for a matrix $A$ \cite[5.6.16]{Horn:MatrixAnalysis}: if $\|\cdot\|$ is a matrix norm and $\|A\| < 1$ then $1 - A$ is invertible and $\sum_{k=0}^\infty A^k = (1 - A)^{-1}$ with respect to $\|\cdot\|$. (A \emph{matrix norm} on $M_N(\mathbb{C})$ is a vector norm which satisfies $\|X Y\| \le \|X\| \|Y\|$.) In order to invert $1+tA$, we assume from now on that $\|A\| < 1$ through the domain of integration. There is a justification, explained in \S\ref{section:HomologicalTrick}, which allows us to do so.
\begin{remark}\label{rmk:NormA}
By $\|A\|<1$ we formally mean $\|A(s_1,\ldots,s_{n-1})\|<1$ for all $(s_1,\ldots,s_{n-1}) \in \Delta^{n-1}$. Equivalently, we may define $\|A\|$ as the maximum of this function over the compact set $\Delta^{n-1}$.
\end{remark}
\noindent{}The geometric series for $-tA$ gives
\[
	(1+t A)^{-1} = \sum_{k=0}^\infty (-tA)^k = \sum_{k=0}^\infty (-1)^k t^k A^k\, ,
\]
and it follows that
\[
	(1+t A)^{-2} = \sum_{k=0}^\infty (-1)^k (k+1) t^kA^{k}.
\]
Hence under the assumption $\|A\| < 1$ we may express the integral (\ref{eqn:UnaIntegral}) as a convergent infinite series in the following manner.
\begin{eqnarray*}
	 \lefteqn{ {\rm Tr} \int_M t^{n-1} \left(\left( 1 + t A\right)^{-1} \left(dt A + t dA\right)\right)^{n} } \\
& \stackrel{(\ref{eqn:LastIntegral} )}{=} & n \ {\rm Tr}\int_M t^{2n-2} dt\, A  (1 + t A )^{-2} dA (1 + t A )^{-1}  dA \ldots  (1 + t A )^{-1}  dA  \\
& = &  n  \ {\rm Tr}\int_M t^{2n-2} dt\, A \sum_{ m_{i} \ge 0 } (-1)^{m_1} (m_1+1) (tA)^{m_1} dA \ldots (-1)^{m_{n-1}} (tA)^{m_{n-1}} dA   \\
& = &   n \ {\rm Tr}\int_M  \sum_{ m_{i} \ge 0   } \ (-1)^{\underline{m}}\,
(m_1+1) \, t^{\underline{m}+2n-2} dt\, A^{m_1+1}\,dA\, A^{m_2} dA \ldots A^{m_{n-1}} dA \\
& = &  n \ {\rm Tr}\int_M  \sum_{ m_{i} \ge 0   } \ (-1)^{\underline{m}-1}\,
m_1 \, t^{\underline{m}+2n-3} dt\, A^{m_1}\,dA\, A^{m_2} dA \ldots A^{m_{n-1}} dA\, ,
\end{eqnarray*}
where $\underline{m}$ is our short notation for $\underline{m} (m_1,  \ldots , m_{n-1}) = m_{1} + \ldots + m_{n-1}$.

For $0 < \alpha < 1$ write $M_\alpha = [\alpha,1] \times \Delta^{n-1}$. The conditional convergence and Fubini theorems guarantee that the original integral over $\Delta^n$ equals
\begin{eqnarray*}
	\lefteqn{\lim_{\alpha\to 0^+} \int_{M_\alpha} \sum_{ m_{i} \ge 0   } \ (-1)^{\underline{m}-1}\,
m_1\,  t^{\underline{m}+2n-3} dt\, A^{m_1} dA\, A^{m_2} dA \ldots A^{m_{n-1}} dA}\\
	& = & \sum_{ m_{i} \ge 0   } \frac{(-1)^{\underline{m}-1}\, m_1}{\underline{m}+2n-2} \int_{\Delta^{n-1}} A^{m_1}\,dA\, A^{m_2} dA \ldots A^{m_{n-1}} dA\,,
\end{eqnarray*}
since 
$$
	\lim_{\alpha\to 0^+} \int_\alpha^1 t^{\underline{m}+2n-3}\, dt = \frac{1}{\underline{m}+2n-2}.
$$
%
%\medskip
%
%\begin{thm}\label{thm:MainThm}
%Let $n \ge 3$ odd,  $X_0, \ldots, X_n \in GL_N(\mathbb{C})$ with $N \ge 2n+1$ and $A$ defined as in (\ref{eqn:MatrixA}). If $\norm{A} < 1$ then Hamida's function $\varphi(X_0, \ldots, X_n)$ equals
% \begin{equation*}
%  %\label{eq:PowerSeries}
% %n \ \Tr \int_M \sum_{m_i \ge 0} (-1)^{\underline{m}} (m_1+1) t^{\underline{m}+n-1} dt\, A^{m_1+1}dA\ldots A^{m_{n-1}} dA\,, 
%n \sum_{ m_{i} \ge 0   } \frac{(-1)^{\underline{m}-1}\, m_1}{\underline{m}+2n-2} \Tr \int_{\Delta^{n-1}} A^{m_1}\,dA \ldots A^{m_{n-1}} dA\,,
%\end{equation*}
%where $\underline{m}$ stands for $\underline{m}(m_1, \ldots, m_{n-1}) = m_1 + \ldots + m_{n-1}$.
%\end{thm}

\paragraph{Step 6} \textit{Expand the powers of $A$} \medskip

\noindent{}%This expression can now be expanded as an infinite series by removing the remaining integral operator. 
Let us write
$$
	A  =  U_0 + \sum_{j=1}^{n-1} U_j s_j
$$
where $U_0 = X_0^\ast X_0 - 1$, $U_j = X_j^\ast X_j -  X_0^\ast X_0$ for $1 \le j \le n-1$ as in Eq.~(\ref{eqn:MatrixASimplified}). 
\begin{remark}
If $X_n \neq 1$ then we have $U_0 = X_n\left(X_0^\ast X_0 - 1\right)X_n^{-1}$ and, for $1 \le j \le n-1$, $U_j =X_n\left(X_j^\ast X_j -  X_0^\ast X_0\right)X_n^{-1}$.
\end{remark}

Then 
$$
	dA  =  \sum_{j=1}^{n-1} U_j ds_j
$$
and we can write
\begin{eqnarray*}
	A^{m_1}\, dA \ldots A^{m_{n-1}}\, dA &=& \prod_{i=1}^{n-1} \left( U_0 + \sum_{j=1}^{n-1} U_j s_j \right)^{m_i}
		\left(\sum_{j=1}^{n-1} U_j ds_j \right)\\
		&=& \sum_{|\mathbf{l}| \le |\mathbf{m}|} U(l_1,l_2,\ldots, l_{n-1})\,  s_1^{l_1}s_2^{l_2}\ldots s_{n-1}^{l_{n-1}} ds_1 ds_2 \ldots ds_{n-1}.
\end{eqnarray*}
Here the sum is over all nonnegative integer vectors $\mathbf{l} = (l_1,\ldots, l_{n-1})$ such that $|\mathbf{l}| = \sum_i l_i \le \sum_i m_i = |\mathbf{m}|$, and $U(l_1,\ldots, l_{n-1})$ is defined as the matrix coefficient of $s_1^{l_1}\ldots s_{n-1}^{l_{n-1}}$ in the expansion of the previous line. Since this matrix depends on both $\mathbf{l}$ and $\mathbf{m}=(m_1,\ldots,m_{n-1})$, we will sometimes write $U(\mathbf{m},\mathbf{l})$. It can be described more explicitly:
\begin{lem}\label{lem:U}
The matrix $U(\mathbf{m},\mathbf{l})$ equals the sum of all the matrices of the form
$$
	s\cdot V^1_1\ldots V^{1}_{m_1}W_1 V^{2}_{1} \ldots V^{2}_{m_2}W_2 \ldots V^{n-1}_{1}\ldots V^{n-1}_{m_{n-1}}W_{n-1}
$$
where 
\begin{enumerate}
	\item $V^i_j \in \{ U_k \colon 0\le k \le n-1\}$ for all $i,j$;
	\item $(W_1, \ldots, W_{n-1})$ is a permutation of $(U_1,\ldots,U_{{n-1}})$ of signature $s \in \{\pm 1\}$;
	\item %$l_i$ is the number of times the matrix $U_i$ appears among the type-$V$ matrices;  
$l_k$ is the cardinality of the set $\{ (i,j) \,|\,\ V^i_j = U_k\}$, for each $k$.
\end{enumerate}
\end{lem}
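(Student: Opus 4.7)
The proof is essentially a careful bookkeeping exercise. My plan is to expand the product
$$
A^{m_1}\,dA\cdots A^{m_{n-1}}\,dA
$$
by multiplying out each factor distributively, and then identify which terms contribute to the coefficient of the monomial $s_1^{l_1}\cdots s_{n-1}^{l_{n-1}}\,ds_1\cdots ds_{n-1}$.

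First, I would fix notation. From each of the $m_i$ copies of $A = U_0 + \sum_{k\ge 1} U_k s_k$ in the $i$-th block, I choose an index $k(i,j)\in\{0,1,\ldots,n-1\}$, producing a matrix factor $V^i_j := U_{k(i,j)}$ and a scalar factor equal to $1$ if $k(i,j)=0$ and $s_{k(i,j)}$ otherwise. From the $i$-th copy of $dA = \sum_{k\ge 1} U_k\,ds_k$, I choose an index $\sigma(i)\in\{1,\ldots,n-1\}$, producing a matrix factor $W_i := U_{\sigma(i)}$ and a differential factor $ds_{\sigma(i)}$. Thus a generic summand of the expansion has the form
$$
\Bigl(V^1_1\cdots V^1_{m_1}W_1\,V^2_1\cdots V^2_{m_2}W_2\cdots V^{n-1}_1\cdots V^{n-1}_{m_{n-1}}W_{n-1}\Bigr)\cdot \prod_{(i,j):k(i,j)\ne 0}\!\!\!s_{k(i,j)}\cdot ds_{\sigma(1)}\cdots ds_{\sigma(n-1)}.
$$

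Next I would extract the coefficient of $ds_1\cdots ds_{n-1}$. Since $ds_a\wedge ds_a=0$, the differential part $ds_{\sigma(1)}\cdots ds_{\sigma(n-1)}$ vanishes unless $\sigma$ is a permutation of $\{1,\ldots,n-1\}$, and in that case it equals $\operatorname{sgn}(\sigma)\, ds_1\cdots ds_{n-1}$. This forces condition (2) of the statement and produces the sign factor $s$.

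Finally I would extract the coefficient of $s_1^{l_1}\cdots s_{n-1}^{l_{n-1}}$. The scalar monomial in the displayed summand is $\prod_k s_k^{N_k}$ where $N_k := \#\{(i,j)\colon V^i_j = U_k\}$ for $k\ge 1$; equating exponents forces $N_k = l_k$ for every $k\ge 1$, which is condition (3). The remaining $V$-factors (those with $V^i_j = U_0$) are unconstrained in value but their count is fixed at $|\mathbf{m}|-|\mathbf{l}|$, which is nonnegative precisely when $|\mathbf{l}|\le|\mathbf{m}|$, consistent with the outer summation range in the preceding display. Summing over all allowed choices yields exactly the description of $U(\mathbf{m},\mathbf{l})$ in the lemma.

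There is no real obstacle beyond the care required in handling the wedge products and interpreting the scalar factors; the only subtle point is that $V$-factors equal to $U_0$ carry no $s$ but do contribute to the matrix product, so they must be left totally free in condition (1) of the statement while still respecting the total length $m_i$ of each $V$-block. Once these combinatorial choices are enumerated, the identification with the formula is immediate.
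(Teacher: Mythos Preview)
Your proposal is correct. The paper states this lemma without proof, treating it as an immediate unpacking of the definition of $U(\mathbf{m},\mathbf{l})$ as the matrix coefficient of $s_1^{l_1}\cdots s_{n-1}^{l_{n-1}}\,ds_1\cdots ds_{n-1}$ in the distributive expansion of $A^{m_1}dA\cdots A^{m_{n-1}}dA$; your argument carries out exactly that unpacking and is precisely what is intended.
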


\paragraph{Step 7} \textit{Remove the remaining integral} \medskip

\noindent{}Since the matrices $U(\mathbf{m},\mathbf{l})$ are constant with respect to the variables $s_i$, we have
\begin{eqnarray*}
	 \lefteqn{\int_{\Delta^{n-1}} A^{m_1}\,dA\, A^{m_2} dA \ldots A^{m_{n-1}} dA}  \\
	& = &  \sum_{|\mathbf{l}| \le |\mathbf{m}|} U(\mathbf{m},\mathbf{l})  \int_{\Delta^{n-1}} s_1^{l_1}s_2^{l_2}\ldots s_{n-1}^{l_{n-1}} ds_1 ds_2 \ldots ds_{n-1}. 
\end{eqnarray*}

%Finally, we integrate with respect to $s_1, \ldots, s_{n-1}$. We leave the proof of the following lemma as a multivariable calculus exercise.
\begin{lem}\label{lem:fact}
$$
	\int_{\Delta^{n-1}} s_1^{l_1} \ldots s_{n-1}^{l_{n-1}} ds_1 \ldots ds_{n-1} = \frac{l_1! l_2!\ldots l_{n-1}!}{(l_1+l_2+\ldots +l_{n-1}+n-1)!}\,.
$$
\end{lem}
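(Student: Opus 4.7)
The plan is to prove, by induction on $n \ge 1$, the slightly more general Dirichlet-type identity
\[
\int_{\Delta^{n-1}} s_1^{l_1} \cdots s_{n-1}^{l_{n-1}}\bigl(1 - s_1 - \cdots - s_{n-1}\bigr)^{l_n}\, ds_1 \cdots ds_{n-1} = \frac{l_1!\, l_2! \cdots l_n!}{(l_1 + \cdots + l_n + n - 1)!}\,,
\]
from which the lemma follows immediately by taking $l_n = 0$. It is convenient to prove this strengthened statement because, as the inductive step will reveal, integrating out the last variable naturally produces a power of $(1-s_1-\cdots-s_{n-2})$ even when one starts with $l_n=0$, so the more general form is essentially forced on us anyway.

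For the base case $n = 1$, the left-hand side is the classical Euler Beta integral $\int_0^1 s^{l_1}(1-s)^{l_2}\, ds = B(l_1+1, l_2+1) = l_1!\, l_2!/(l_1+l_2+1)!$, which can be obtained by a quick induction on $l_1$ using integration by parts. For the inductive step, assuming the formula in dimension $n-2$, I would perform the change of variables $s_{n-1} = (1 - s_1 - \cdots - s_{n-2})\, u$ with $u \in [0, 1]$. Under this substitution, $1 - s_1 - \cdots - s_{n-1} = (1 - s_1 - \cdots - s_{n-2})(1 - u)$ and $ds_{n-1} = (1 - s_1 - \cdots - s_{n-2})\, du$, so the iterated integral factors as the product of the one-dimensional integral $\int_0^1 u^{l_{n-1}}(1-u)^{l_n}\, du$ (again a Beta integral) and an $(n-2)$-dimensional integral over $\Delta^{n-2}$ of the same form as the inductive hypothesis, but with exponents $l_1, \ldots, l_{n-2}, l_{n-1}+l_n+1$ on $s_1,\ldots,s_{n-2}$ and $1 - s_1 - \cdots - s_{n-2}$ respectively.

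Applying the inductive hypothesis and the base case yields the product
\[
\frac{l_{n-1}!\, l_n!}{(l_{n-1}+l_n+1)!} \cdot \frac{l_1!\cdots l_{n-2}!\,(l_{n-1}+l_n+1)!}{(l_1+\cdots+l_n+n-1)!}\,,
\]
which telescopes to the claimed answer. There is no real obstacle here: the argument is a standard change of variables combined with algebraic cancellation, and the only care required is to verify that the substitution is a bijection onto $\Delta^{n-1}$ with the stated Jacobian, which is immediate.
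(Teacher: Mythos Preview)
Your approach is correct and entirely standard: proving the Dirichlet integral formula by induction via the substitution $s_{n-1}=(1-s_1-\cdots-s_{n-2})u$, which factors the integral into a Beta integral times a lower-dimensional instance of the same identity. The telescoping is exactly as you state.

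There is one small indexing slip: in your generalized identity the case $n=1$ is the integral over $\Delta^{0}$ (a single point) and is trivially $1=l_1!/l_1!$; the Euler Beta integral you write down is actually the case $n=2$. This does not affect the argument---just start the induction at $n=2$ (or keep $n=1$ as the trivial base and use the Beta evaluation inside the inductive step, as you in fact do).

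As for comparison with the paper: the paper gives no proof at all, simply declaring the lemma ``a multivariable calculus exercise.'' Your write-up is precisely the exercise the authors have in mind, so there is nothing to contrast.
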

\noindent{}To ease notation we write the number above as $\textrm{fact}(l_1,\ldots,l_{n-1})$ or $\textrm{fact}(\mathbf{l})$. We leave the proof of the lemma as a multivariable calculus exercise.

\smallskip

On the whole we have proven the following.
\begin{thm}\label{thm:MainThm}
Let $n \ge 3$ odd and  $X_0, \ldots, X_n \in GL_N(\mathbb{C})$. Let 
$$
	  A = U_0 + \sum_{j=1}^{n-1} U_j s_j 
$$ 
where $U_0 = X^\ast_n\left(X_0^\ast X_0 - 1\right)X_n^{-1}$ and $U_j =X_n\left(X_j^\ast X_j -  X_0^\ast X_0\right)X_n^{-1}$ for $1 \le j \le n-1$. Suppose that  $\norm{A} < 1$.
Then Hamida's function $\varphi(X_0, \ldots, X_n)$ equals the limit of the convergent series
 \begin{equation*}
n \sum_{ |\mathbf{m}|=0 }^\infty \frac{(-1)^{|\mathbf{m}|-1}}{|\mathbf{m}|+2n-2} \, m_1
\sum_{|\mathbf{l}| \le |\mathbf{m}|} \textup{fact}(\mathbf{l}) \, \Tr\left( U(\mathbf{m},\mathbf{l}) \right)\, 
\end{equation*}
where
\begin{enumerate}
	\item the outer sum is over all nonnegative integer vectors $\mathbf{m}=(m_1,\ldots,m_{n-1})$, that is, the limit when $k \to \infty$ of the finite sums over $|\mathbf{m}|\le k$;
	\item the (finite) inner sum is over all nonnegative integer vectors $\mathbf{l}=(l_1,\ldots,l_{n-1})$ such that $|\mathbf{l}| \le |\mathbf{m}|$;
	\item $\textup{fact}(\mathbf{l})$ is as defined in Lemma \ref{lem:fact};
	\item $U(\mathbf{m},\mathbf{l})$ is as defined in Lemma \ref{lem:U}.
\end{enumerate}
\end{thm}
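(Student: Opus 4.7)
The plan is to assemble the seven steps carried out in the preceding subsection into a single chain of equalities starting from Hamida's integral $\varphi(X_0,\ldots,X_n) = \Tr\int_{\Delta^n}(\nu^{-1}d\nu)^n$ and ending at the claimed series. First, parametrise the simplex $\Delta_n \subset \mathbb{R}^{n+1}$ by $n$ barycentric coordinates $y_1,\ldots,y_n$ (Step 1), so that $\nu$ becomes the affine expression (\ref{eqn:nu1}). Next apply the diffeomorphism $T(t,s_1,\ldots,s_{n-1}) = (s_1t,\ldots,s_{n-1}t,1-t)$ from $[0,1]\times\Delta^{n-1}$ onto $\Delta^n$ (Step 2); the substitution rule for manifolds with boundary handles the change of variables on $M_\alpha = [\alpha,1]\times\Delta^{n-1}$, and the original integral is the limit of these approximating integrals as $\alpha\to 0^+$. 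After using the homogeneity property (\ref{homogeneous}) to reduce to $X_n=1$ (Step 3), the integrand takes the clean form $\nu = 1 + tA$ with $A$ as in (\ref{eqn:MatrixASimplified}), giving the integral (\ref{eqn:UnaIntegral}).

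Then, exploit the splitting $d\nu = dt\,A + t\,dA$ together with the anticommutation of the one-forms $dt$ and $dA$ to reduce $(Y+Z)^n$ to a sum of $n$ cyclically equivalent terms $Z^j Y Z^{n-1-j}$ (Step 4). Cyclic invariance of the trace combined with the sign $(-1)^{n-1}$ produced by permuting the $n$-form $dx_1\cdots dx_n$ shows that (i) when $n$ is even the sum vanishes (recovering Hamida's known vanishing statement, which also justifies restricting attention to odd $n$) and (ii) when $n$ is odd the $n$ summands are equal, producing the factor $n$ in front and allowing one to pull $A$ past $(1+tA)^{-1}$ because they commute. This yields the intermediate form (\ref{eqn:LastIntegral}).

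Under the hypothesis $\|A\|<1$, apply the matrix geometric series to $(1+tA)^{-1}$ and $(1+tA)^{-2}$ (Step 5), substitute into (\ref{eqn:LastIntegral}), and expand; the $t$-integration over $[\alpha,1]$ is elementary and produces the coefficient $1/(|\mathbf{m}|+2n-2)$ in the limit $\alpha\to 0^+$. For the $s$-integration, write $A = U_0 + \sum_j U_j s_j$ and $dA = \sum_j U_j\,ds_j$, expand the product $A^{m_1}dA\cdots A^{m_{n-1}}dA$ as a polynomial in $s_1,\ldots,s_{n-1}$ with matrix coefficients $U(\mathbf{m},\mathbf{l})$ (Step 6, Lemma \ref{lem:U}), and integrate term by term using the Dirichlet formula of Lemma \ref{lem:fact} (Step 7). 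Assembling the pieces and taking the trace yields the claimed series.

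The main obstacle is justifying the interchange of the infinite sum, the trace, and the $\alpha\to 0^+$ limit of the $t$-integral. Here $\|A\|<1$ uniformly on $\Delta^{n-1}$ (Remark \ref{rmk:NormA}) gives a geometric majorant: the summand of index $\mathbf{m}$ is bounded in norm by $C \cdot m_1\|A\|^{|\mathbf{m}|}/(|\mathbf{m}|+2n-2)$ for a constant $C$ depending only on $\|dA\|$ and $N$, which is summable. Dominated convergence then legitimises pulling the sum outside the integral, and Fubini lets us iterate the $t$- and $s$-integrals. This completes the reduction to the closed-form series.
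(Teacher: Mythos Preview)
Your proposal is correct and follows essentially the same approach as the paper: the paper's proof of this theorem \emph{is} the sequence of Steps~1--7 in \S\ref{section:PowerSeriesFormula}, and the theorem is stated immediately afterwards with the phrase ``On the whole we have proven the following.'' Your write-up is a faithful summary of those steps, with the only difference being that you give a slightly more explicit dominated-convergence bound for the interchange of sum and integral where the paper simply invokes ``the conditional convergence and Fubini theorems.''
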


\begin{remark}[Case $n=1$] \label{remark:Casen1}
In this case we only need Steps 1 to 3 and the geometric series formula to invert $1+tA$. The  map $T$ can be taken to be the identity map,  $M = [0,1]$ and note that the matrix $A$ is constant. From (\ref{eqn:UnaIntegral}) and assuming $\|A\| < 1$,
\begin{align*}
	\Tr \int_{ \Delta^{1}}   ( \nu^{-1} d \nu)  & =   \Tr \int_{0}^{1}  (1 + t A)^{-1} A\, dt \\
&  = \Tr  \, \int_{0}^{1} \sum_{m \geq 0}  (-1)^{m} t^{m} A^{m} A\, dt  \\
&  = \Tr  \, \sum_{m \geq 0}  (-1)^{m} A^{m+1} \int_{0}^{1} t^{m} dt  \\
& =    \Tr \left( \log\left(1 + A\right)\right) .
\end{align*}
\end{remark}

In Section \ref{section:formula3} we will consider the case $n=3$ in more detail (the relevant case for the computation of $V_1(F)$). Before that, we explain how to ensure the condition $\norm{A} < 1$.

\subsection{Controlling the norm}\label{section:HomologicalTrick}
In order to use the geometric series to invert $1+tA$ (Step 5 in \S\ref{section:PowerSeriesFormula}) we need $\|A\| < 1$ for any matrix norm $\|\cdot\|$. We ensure this condition for the spectral norm by using a homological trick. First we briefly discuss matrix norms.

Let $\norm{\cdot}$ a \emph{matrix norm}, that is, a vector norm in $M_N(\mathbb{C})$ which satisfies $\|X Y\| \le \|X\| \|Y\|$. Our main example will be the \emph{spectral
norm}\label{def:SpectralNorm} \cite[5.6.6]{Horn:MatrixAnalysis}
$$
    \| X \|_{2} = \max\left\{ \sqrt{\lambda} \,:\,
    \lambda \textrm{ is an eigenvalue of } X^*X \right\}.
$$
(Note that $X^\ast X$ is positive semidefinite (hermitian) and hence all eigenvalues are real and nonnegative.)
This norm satisfies
\begin{itemize}
	\item[(i)] $\norm{X^\ast} = \norm{X}$ for all $X \in M_N(\mathbb{C})$; 
	\item[(ii)] $\norm{X^\ast X} = \norm{X}^2$ for all $X \in M_N(\mathbb{C})$;
	\item[(iii)] if $X$ is hermitian then $\norm{X} = \max\left\{ |\lambda| \,:\, \lambda \textrm{ is an eigenvalue of } X \right\}$;
	\item[(iv)] if $X$ is positive semidefinite then $\norm{X} = \lambda_\text{max}(X)$ the maximum eigenvalue.
\end{itemize}
If $X$ is hermitian, all its eigenvalues are real and we will write them in increasing order as
$$ \lambda_{\rm min}(X) = \lambda_1(X) \le \lambda_2(X) \le \cdots
\le \lambda_N(X) = \lambda_{\rm max}(X).$$
\begin{lem}\label{lem:Weyl}
Let $X, Y$ be hermitian matrices. Then $\lambda_{\rm max}(X+Y) \le \lambda_{\rm max}(X) + \lambda_{\rm max}(Y)$ and 
 $\lambda_{\rm min}(X+Y) \ge \lambda_{\rm min}(X) + \lambda_{\rm min}(Y)$.
\end{lem}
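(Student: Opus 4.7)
The plan is to prove both inequalities via the Rayleigh quotient characterization of the extreme eigenvalues of a hermitian matrix. Recall that for any hermitian $H \in M_N(\mathbb{C})$, the spectral theorem gives an orthonormal eigenbasis, and it follows from a direct diagonal computation that
\[
\lambda_{\max}(H) = \max_{\norm{v}=1} v^\ast H v \qquad \text{and} \qquad \lambda_{\min}(H) = \min_{\norm{v}=1} v^\ast H v.
\]
I would state this as a preliminary fact (or cite it from a standard reference such as \cite{Horn:MatrixAnalysis}); the maxima/minima are attained on unit eigenvectors for $\lambda_{\max}$/$\lambda_{\min}$.

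Next, since $X$ and $Y$ are hermitian, so is $X+Y$. The key observation is the pointwise decomposition $v^\ast(X+Y)v = v^\ast X v + v^\ast Y v$ for every unit vector $v$. For the first inequality I would take any unit $v_0$ achieving $\lambda_{\max}(X+Y)$ and bound
\[
\lambda_{\max}(X+Y) = v_0^\ast X v_0 + v_0^\ast Y v_0 \le \max_{\norm{v}=1} v^\ast X v + \max_{\norm{v}=1} v^\ast Y v = \lambda_{\max}(X) + \lambda_{\max}(Y).
\]
The second inequality follows symmetrically by replacing $\max$ with $\min$ and reversing the inequality, using $v^\ast X v \ge \lambda_{\min}(X)$ and $v^\ast Y v \ge \lambda_{\min}(Y)$ for any unit $v$.

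There is essentially no obstacle here — the statement is a special case of Weyl's monotonicity inequalities, and the two-line Rayleigh-quotient argument above avoids invoking the full Courant–Fischer min-max theorem, which would be overkill for just the extreme eigenvalues. The only thing to be slightly careful about is the attainment of the extrema (which is immediate by continuity on the compact unit sphere, or by exhibiting a unit eigenvector directly).
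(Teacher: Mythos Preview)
Your argument is correct. The paper itself does not prove this lemma at all: it simply states that it follows from Theorem 4.3.1 in \cite{Horn:MatrixAnalysis}, i.e.\ the general Weyl inequalities. Your Rayleigh-quotient proof is a self-contained version of exactly the special case needed here (only the extreme eigenvalues), so it is if anything more elementary than invoking the full Weyl theorem via citation; both approaches are standard and there is no substantive difference in content.
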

\noindent{}This Lemma follows from Theorem 4.3.1 in \cite{Horn:MatrixAnalysis}.

If $X$ is a matrix of functions over a set $\Delta \subset \mathbb{R}^n$ such that $X(\mathbf{s})$ is hermitian for each $\mathbf{s} \in \Delta$, we define
\begin{eqnarray*}
	\lambda_{\rm max}(X) &=& \sup_{\mathbf{s}\in \Delta} \lambda_{\rm max}(X(\mathbf{s})),\\
	\lambda_{\rm min}(X) &=& \inf_{\mathbf{s}\in \Delta} \lambda_{\rm min}(X(\mathbf{s})),\\
	\norm{X} &=& \sup_{\mathbf{s}\in \Delta} \norm{X(\mathbf{s})}.\\
\end{eqnarray*}
This definition is consistent with our previous notation (Remark \ref{rmk:NormA}).
Note that if $X(\mathbf{s})$ is positive definite for all $\mathbf{s}$ then $\norm{X}=\lambda_{\rm max}(X)$.

Let $X_0, \ldots, X_{n} \in GL_N(\mathbb{C})$ and consider for each $\mathbf{s}=(s_1,\ldots, s_{n}) \in \Delta^{n}$
\begin{eqnarray*}
	A(\mathbf{s}) &=&  X_0^\ast X_0 - I + \sum_{\substack{j=1}}^n  s_j \left(X_j^\ast X_j - X_0^\ast X_0 \right).
\end{eqnarray*}
(This is the matrix $A$ associated to the tuple $(X_0,X_1,\ldots,X_n,I)$ as in \S\ref{section:PowerSeriesFormula}.)
We may write $A(\mathbf{s}) = H(\mathbf{s}) - I$ where 
$$
	H(\mathbf{s}) = X_0^\ast X_0 \Big( 1 - \sum_{\substack{j=1}}^n s_j \Big) + \sum_{\substack{j=1}}^n s_j X_j^\ast X_j 
$$
is a positive definite hermitian matrix (a positive linear combination of positive definite hermitian matrices). %for each $\mathbf{s}\in \Delta^{n-1}$.
Define
\begin{eqnarray*}
	\lambda_{\textup{max}} &=& \max_{0\le j \le n} \lambda_{\textup{max}}(X_j^\ast X_j) = \max_{i,j} \lambda_{i}(X_j^\ast X_j) \quad \text{and}\\
	\lambda_{\textup{min}} &=& \min_{0\le j \le n} \lambda_{\rm min}(X_j^\ast X_j) = \min_{i,j} \lambda_{i}(X_j^\ast X_j).
\end{eqnarray*}
\begin{lem}\label{lem:Bounds}
We have
\begin{itemize}
	\item[(i)] $\norm{H} = \lambda_{\textup{max}}(H) = \lambda_{\textup{max}}$, $\lambda_{\textup{min}}(H) = \lambda_{\textup{min}}$;
	\item[(ii)] $\norm{A} = \max \{ |\lambda_{\rm max} - 1|, |\lambda_{\rm min} - 1|\}$;
	\item[(iii)] $\norm{A} < 1$ if and only if $\lambda_\textup{max} < 2$.
\end{itemize}
\end{lem}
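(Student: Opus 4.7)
The plan is to prove the three parts in order, exploiting the fact that $H(\mathbf{s})$ is a positive definite hermitian matrix for each $\mathbf{s}\in\Delta^n$, together with the observation that $A=H-I$ has eigenvalues $\lambda_i(H)-1$.

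For (i), since each $X_j^\ast X_j$ is positive definite hermitian and $H(\mathbf{s})$ is a convex combination of them (the coefficients $1-\sum_j s_j,\, s_1,\ldots,s_n$ are nonnegative and sum to $1$ on $\Delta^n$), one applies Lemma \ref{lem:Weyl} inductively to obtain, for every $\mathbf{s}\in\Delta^n$,
\[
\lambda_{\min} \;\le\; \lambda_{\min}(H(\mathbf{s})) \;\le\; \lambda_{\max}(H(\mathbf{s})) \;\le\; \lambda_{\max}.
\]
The outer bounds are attained at the vertex $\mathbf{s}_0$ of $\Delta^n$ with $H(\mathbf{s}_0)=X_{j_0}^\ast X_{j_0}$ for a $j_0$ realising $\lambda_{\max}$ (resp.\ $\lambda_{\min}$), which gives the equalities in (i). The equality $\norm{H}=\lambda_{\max}(H)$ is then immediate from property (iv) of the spectral norm, since $H(\mathbf{s})$ is positive definite for every $\mathbf{s}$.

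For (ii), the matrix $A(\mathbf{s})=H(\mathbf{s})-I$ is hermitian for each $\mathbf{s}$, so by property (iii) of the spectral norm,
\[
\norm{A(\mathbf{s})} \;=\; \max\bigl\{\, |\lambda_{\max}(H(\mathbf{s}))-1|,\; |\lambda_{\min}(H(\mathbf{s}))-1|\,\bigr\}.
\]
From the bounds in (i), every eigenvalue of $H(\mathbf{s})$ lies in $[\lambda_{\min},\lambda_{\max}]$, so $|\lambda-1|\le \max\{|\lambda_{\max}-1|,|\lambda_{\min}-1|\}$ for every such eigenvalue. Taking the supremum over $\mathbf{s}$ yields $\norm{A}\le \max\{|\lambda_{\max}-1|,|\lambda_{\min}-1|\}$. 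For the reverse inequality, the two vertices of $\Delta^n$ where $H$ realises $\lambda_{\max}$ and $\lambda_{\min}$ respectively show that $|\lambda_{\max}-1|$ and $|\lambda_{\min}-1|$ each occur as $\norm{A(\mathbf{s})}$ for some $\mathbf{s}$, giving the desired equality.

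For (iii), since each $X_j$ is invertible, $X_j^\ast X_j$ is positive definite and $\lambda_{\min}>0$. By (ii), the condition $\norm{A}<1$ is equivalent to $|\lambda_{\max}-1|<1$ and $|\lambda_{\min}-1|<1$, i.e.\ to $0<\lambda_{\max}<2$ and $0<\lambda_{\min}<2$. The positivity of both $\lambda_{\min}$ and $\lambda_{\max}$ is automatic, and $\lambda_{\min}\le\lambda_{\max}$, so the two upper bounds reduce to the single condition $\lambda_{\max}<2$. The only mild subtlety here is remembering that the positive definiteness (not just semi-definiteness) of the $X_j^\ast X_j$ is essential to discard the lower bound; there is no real obstacle, and the whole lemma follows from a careful bookkeeping of Weyl's inequality combined with the spectral-norm characterisation of eigenvalues of hermitian matrices.
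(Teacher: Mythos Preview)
Your proof is correct and follows essentially the same approach as the paper: Weyl's inequality for part (i), the spectral-norm characterisation of hermitian matrices for part (ii), and the positivity of $\lambda_{\min}$ for part (iii). If anything, you are slightly more explicit than the paper in noting that the extremal eigenvalues in (i) are actually attained at vertices of the simplex (the paper only writes the inequalities and declares part (i) proved), and in splitting (ii) into separate upper and lower bounds rather than the paper's one-line ``sup of distances to $1$'' observation; but these are cosmetic differences, not a different route.
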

\begin{proof}
Using Lemma \ref{lem:Weyl} we have
\[
\begin{array}{l}
	\lambda_{\textup{max}}\left(H(\mathbf{s})\right) \le \Big( 1 - \sum_{\substack{j=1}}^n s_j \Big)\lambda_{\textup{max}}(X_0^\ast X_0)  + \sum_{\substack{j=1}}^n s_j\, \lambda_{\textup{max}}(X_j^\ast X_j) \le \lambda_{\textup{max}}  \ \text{ and}\\
	\lambda_{\textup{min}}\left(H(\mathbf{s})\right) \ge \Big( 1 - \sum_{\substack{j=1}}^n s_j \Big)\lambda_{\textup{min}}(X_0^\ast X_0)  + \sum_{\substack{j=1}}^n s_j \, \lambda_{\textup{min}}(X_j^\ast X_j) \ge \lambda_{\textup{min}} 
\end{array}
\]
for all $\mathbf{s} = (s_1,\ldots,s_{n-1}) \in \Delta^{n-1}$. This proves part (i).\\
For part (ii), we have %(all suprema are understood to be over $\mathbf{s}\in \Delta^{n-1}$)
\[
	\norm{A} = \sup_\mathbf{s}\norm{H(\mathbf{s}) - I} = \sup_\mathbf{s} \max_i |\lambda_i (H(\mathbf{s}) - I)| = \sup_{\mathbf{s}, i}|\lambda_i (H(\mathbf{s})) - 1|.
\]
The supremum of the distances between a point and the points of a bounded subset of $\mathbb{R}$ equals the distance to either the supremum or the infimum of the set,
\[
	\norm{A} = \max \{ |\sup_{\mathbf{s},i} (\lambda_i (H(\mathbf{s}))) - 1|, |\inf_{\mathbf{s},i} (\lambda_i (H(\mathbf{s}))) - 1| \} = \max \{ |\lambda_{\textup{max}} - 1|, |\lambda_{\textup{min}} - 1| \},\\
\]
using part (i).\\
Part (iii) follows from (ii) observing that $0 < \lambda_\textup{min} \le \lambda_\textup{max}$.
%Since $H(\mathbf{s})$ is positive definite, all its eigenvalues are between 0 and $\lambda_\text{max}(H)=M<1$ and hence  (all suprema/infima are understood to be over $\mathbf{s}\in \Delta^{n-1}$)
%\[
%	\norm{A} = \sup \max_i |\lambda_i (H(\mathbf{s})) - 1| = \sup (1 - \lambda_\text{min}(H(\mathbf{s}))) = 1 - \inf \lambda_\text{min}(H(\mathbf{s}))) = 1 - m. \qedhere
%\]
\end{proof}
We now explain how to guarantee the condition $\norm{A} < 1$ by rescaling the matrices $X_i$. Let $\mu > 0$. The boundary of the tuple $(X_0, \ldots, X_{n}, \mu^{-1}I)$ is
$$
	\sum_{i=0}^{n} (-1)^i (X_0, \ldots, \widehat{X_i}, \ldots, X_{n}, \mu^{-1}I) + (-1)^{n+1} (X_0, \ldots, X_{n}).
$$
Since Hamida's function $\varphi$ is a cocycle, it vanishes on boundaries and thus 
\begin{eqnarray*}
	\varphi(X_0, \ldots,  X_{n}) &=& \sum_{i=0}^n (-1)^{n+i} \varphi(X_0, \ldots, \widehat{X_i}, \ldots, X_{n}, \mu^{-1}I)
\\ &=& \sum_{i=0}^n (-1)^{n+i} \varphi(\mu{}X_0, \ldots, \widehat{\mu{}X_i}, \ldots, \mu{}X_{n}, I)\,,
\end{eqnarray*}
the last equality coming from multiplying by a diagonal matrix with $\mu$ in the diagonal (Eq.(\ref{homogeneous}) in Remark \ref{rmk:HomoUni}).

We now prove that if $\mu$ is small enough, the matrix $A$ associated to any of the tuples on the right-hand side satisfies $\|A\|<1$ for the spectral norm. Hence the value of Hamida's function $\varphi$ at $(X_0, \ldots,  X_{n})$ can be computed as the alternating sum of the values at tuples satisfying the hypotheses of Theorem \ref{thm:MainThm}.
\begin{prop}\label{prop:BoundA}
Let $X_0, \ldots, X_{n} \in GL_N(\mathbb{C})$ and $\mu > 0$. Then
\begin{eqnarray*}
	\varphi(X_0, \ldots,  X_{n}) &=& \sum_{i=0}^n (-1)^{n+i} \varphi(\mu{}X_0, \ldots, \widehat{\mu{}X_i}, \ldots, \mu{}X_{n}, I)\,.
\end{eqnarray*}
Let $\lambda_{\textup{max}} = \max_{0\le j \le n} \lambda_{\textup{max}}(X_j^\ast X_j)$. If $0 < \mu < \sqrt{\frac{2}{\lambda_\textup{max}}}$ then for each tuple on the right-hand side, the associated matrix $A$ satisfies $\norm{A}<1$.
\end{prop}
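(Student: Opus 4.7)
The plan is to verify the two assertions separately; both are essentially bookkeeping consequences of what has already been set up.

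For the first identity, I would exploit the fact that $\varphi$ is a cocycle (Theorem \ref{thm:Hamida}) together with the right-homogeneity property (\ref{homogeneous}). Consider the $(n+2)$-tuple $(X_0, \ldots, X_n, \mu^{-1}I)$; its coboundary in the homogeneous bar resolution is
\[
	\sum_{i=0}^{n} (-1)^i (X_0,\ldots,\widehat{X_i},\ldots,X_n,\mu^{-1}I) \,+\, (-1)^{n+1}(X_0,\ldots,X_n),
\]
where the last term arises from deleting the final entry. Applying $\varphi$, using that it vanishes on coboundaries, and solving for $\varphi(X_0,\ldots,X_n)$ gives the identity but with the tuples ending in $\mu^{-1}I$. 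I then right-multiply every entry of each such tuple by the scalar matrix $\mu I \in GL_N(\mathbb{C})$; by (\ref{homogeneous}) this does not change the value of $\varphi$, while it transforms $(X_0,\ldots,\widehat{X_i},\ldots,X_n,\mu^{-1}I)$ into $(\mu X_0,\ldots,\widehat{\mu X_i},\ldots,\mu X_n, I)$, as required.

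For the norm bound, fix $i$ and consider the tuple $Y = (\mu X_0,\ldots,\widehat{\mu X_i},\ldots,\mu X_n, I)$, which has length $n+1$ and ends in $I$, so the matrix $A$ associated to it is precisely of the form treated in \S\ref{section:HomologicalTrick}. By Lemma \ref{lem:Bounds}(iii), $\norm{A}<1$ is equivalent to the condition $\lambda_{\textup{max}}^{(Y)} < 2$, where $\lambda_{\textup{max}}^{(Y)}$ denotes the maximum of the largest eigenvalues of $Z^\ast Z$ over the entries $Z$ of $Y$. The entries of $Y$ are either $\mu X_k$ for $k \neq i$, contributing $\lambda_{\textup{max}}((\mu X_k)^\ast(\mu X_k)) = \mu^2\lambda_{\textup{max}}(X_k^\ast X_k)$, or the identity, contributing $1$. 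Hence
\[
	\lambda_{\textup{max}}^{(Y)} \;\le\; \max\bigl(\mu^2 \lambda_{\textup{max}},\, 1\bigr),
\]
and under the hypothesis $\mu < \sqrt{2/\lambda_{\textup{max}}}$ this quantity is strictly less than $2$.

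There is no real obstacle; the only care needed is to check the sign and reindexing in the cocycle identity, and to observe that Lemma \ref{lem:Bounds} applies to each tuple on the right-hand side because its last entry is $I$, so the factor $1$ produced by $I^\ast I$ is harmless as long as $\mu^2 \lambda_{\textup{max}} < 2$.
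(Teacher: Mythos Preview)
Your proof is correct and follows essentially the same route as the paper: the identity is obtained exactly as you describe (cocycle relation applied to $(X_0,\ldots,X_n,\mu^{-1}I)$ followed by right-multiplication by $\mu I$ via (\ref{homogeneous})), and the norm bound is deduced from Lemma~\ref{lem:Bounds}(iii) by checking that the relevant maximal eigenvalue is below $2$. The only cosmetic difference is that in Lemma~\ref{lem:Bounds} the quantity $\lambda_{\textup{max}}$ ranges over the first $n$ entries of the tuple (excluding the final $I$), so your inclusion of the contribution $1$ from $I$ is unnecessary---but harmless, since $1<2$ automatically.
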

\begin{proof}
We are left with the proof of the second statement. 
Let $A$ be the matrix associated to $(\mu{}X_0, \ldots, \widehat{\mu{}X_i}, \ldots, \mu{}X_{n}, I)$ for some $0 \le i \le n$. That is,
\begin{eqnarray*}
	A(\mathbf{s}) &=& \mu^2 X_0^\ast X_0 - I + \mu^2\sum_{\substack{j=1}}^{i-1}  s_j \left(X_j^\ast X_j - X_0^\ast X_0 \right) + \mu^2\sum_{\substack{j=i+1}}^n  s_{j-1} \left(X_j^\ast X_j - X_0^\ast X_0 \right),
\end{eqnarray*}
for each $\mathbf{s}=(s_1,\ldots,s_{n-1}) \in \Delta^{n-1}$. Let us write $A(\mathbf{s}) = H(\mathbf{s}) - I$. By Lemma \ref{lem:Bounds}(i) we have
\[
	\lambda_{\textup{max}}(H) = \max_{j\neq i} \lambda_\textup{max}(\mu^2 X_j^\ast X_j) \le \mu^2 \lambda_\textup{max} < 2.
\]
The result follows now from Lemma \ref{lem:Bounds}(iii). 
\end{proof}

For computational purposes (cf.~\S\ref{section:ComputerAlgorithm}) we will be interested in minimazing $\norm{A}$. We record the relevant result here, followed by a remark.
\begin{lem} \label{optimal}
Let $X_0, \ldots, X_{n} \in GL_N(\mathbb{C})$, $\lambda_{\textup{max}} = \max_{0\le j \le n} \lambda_{\textup{max}}(X_j^\ast X_j)$, $\lambda_{\textup{min}} = \min_{0\le j \le n} \lambda_{\textup{min}}(X_j^\ast X_j)$. Given $\mu > 0$ write $A_\mu$ for the matrix associated to the tuple $(\mu X_0, \ldots, \mu X_n)$. Then the function $\mu \mapsto \norm{A_\mu}$ reaches a minimum value $\frac{\lambda_\textup{max}-\lambda_\textup{min}}{\lambda_\textup{max}+\lambda_\textup{min}}$ at $\mu = \sqrt{\frac{2}{\lambda_\textup{max}+\lambda_\textup{min}}}$.
\end{lem}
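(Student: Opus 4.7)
The plan is to reduce the statement to a straightforward one-variable min-max problem by combining the bounds already established in Lemma~\ref{lem:Bounds}. Rescaling $X_j \mapsto \mu X_j$ multiplies each $X_j^\ast X_j$ by $\mu^2$, so the defining formula for $A$ in \S\ref{section:HomologicalTrick} gives
\[
  A_\mu(\mathbf{s}) \;=\; \mu^2 H(\mathbf{s}) - I,
\]
where $H(\mathbf{s})$ is precisely the positive-definite hermitian convex combination of $X_0^\ast X_0,\ldots,X_n^\ast X_n$ from that section, and in particular is independent of $\mu$.

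The second step is to read off the norm. Lemma~\ref{lem:Bounds}(i) tells us that as $\mathbf{s}$ varies over $\Delta^n$ the quantities $\lambda_{\min}(H(\mathbf{s}))$ and $\lambda_{\max}(H(\mathbf{s}))$ achieve the values $\lambda_{\min}$ and $\lambda_{\max}$ respectively (at the vertices corresponding to the matrices attaining the global min/max eigenvalues). Hence the hermitian matrix $A_\mu(\mathbf{s}) = \mu^2 H(\mathbf{s}) - I$ has eigenvalues spanning $[\mu^2\lambda_{\min}-1,\,\mu^2\lambda_{\max}-1]$, and exactly as in the proof of Lemma~\ref{lem:Bounds}(ii),
\[
  \|A_\mu\| \;=\; \max\bigl\{\,|\mu^2\lambda_{\max}-1|,\;|\mu^2\lambda_{\min}-1|\,\bigr\}.
\]

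The final step is the one-variable optimization. Setting $x=\mu^2$ and using $0<\lambda_{\min}\le\lambda_{\max}$, I minimize $f(x)=\max\{|x\lambda_{\max}-1|,\,|x\lambda_{\min}-1|\}$ on $(0,\infty)$. On $[1/\lambda_{\max},\,1/\lambda_{\min}]$ the first term equals $x\lambda_{\max}-1$ (nondecreasing) and the second equals $1-x\lambda_{\min}$ (nonincreasing); they coincide at $x^\ast = 2/(\lambda_{\max}+\lambda_{\min})$, where each equals $(\lambda_{\max}-\lambda_{\min})/(\lambda_{\max}+\lambda_{\min})$. Outside this interval one term dominates monotonically away from $x^\ast$, so $f$ is larger there. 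Therefore the minimum is attained at $\mu=\sqrt{2/(\lambda_{\max}+\lambda_{\min})}$ with value $(\lambda_{\max}-\lambda_{\min})/(\lambda_{\max}+\lambda_{\min})$, as claimed.

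No step presents a genuine obstacle; the only care needed is bookkeeping, namely remembering that the matrix $A$ in \S\ref{section:HomologicalTrick} is associated to a tuple in which an identity matrix has been appended, so that the rescaling $X_j\mapsto\mu X_j$ turns the constant term $X_0^\ast X_0 - I$ into $\mu^2 X_0^\ast X_0 - I$ rather than $\mu^2(X_0^\ast X_0 - I)$ — this is precisely what produces the clean form $A_\mu = \mu^2 H - I$ on which the optimization hinges.
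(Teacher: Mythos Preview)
Your proof is correct and follows essentially the same route as the paper: apply Lemma~\ref{lem:Bounds}(ii) to obtain $\|A_\mu\|=\max\{|\mu^2\lambda_{\max}-1|,|\mu^2\lambda_{\min}-1|\}$, then observe that the minimum of this max occurs where the two distances to $1$ coincide. You are a bit more explicit (writing $A_\mu=\mu^2 H-I$ and spelling out the monotonicity on either side of the crossing point), but the argument is the same.
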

\begin{proof}
By Lemma \ref{lem:Bounds}(ii)
\[
	\norm{A_\mu} = \max \{ |\mu^2\lambda_\textup{max} - 1|, |\mu^2\lambda_\textup{min} - 1|\}.
\]
The maximum of the distances to 1 reaches a minimum when both points are equidistant
\begin{eqnarray*}
	\mu^2\lambda_\textup{max} - 1 = 1 - \mu^2\lambda_\textup{min} &\: \Leftrightarrow \:& \mu = \sqrt{\frac{2}{\lambda_\textup{max}+\lambda_\textup{min}}}\, .
\end{eqnarray*}
For this value of $\mu$
\[
	\norm{A_\mu} = \mu^2\lambda_\textup{max} - 1 = \frac{2\lambda_\textup{max}}{\lambda_\textup{max}+\lambda_\textup{min}}-1 = \frac{\lambda_\textup{max}-\lambda_\textup{min}}{\lambda_\textup{max}+\lambda_\textup{min}}. \qedhere
\]
\end{proof}
\begin{remark}\label{remark:OptimalMu}
Note that $\mu = \sqrt{\frac{2}{\lambda_\textup{max}+\lambda_\textup{min}}} \le \sqrt{\frac{2}{\lambda_\textup{max}}}$ and hence Proposition \ref{prop:BoundA} applies for this choice of scaling factor $\mu$. Moreover, for all tuples on the right-hand side except at most two, the associated matrix $A$ will reach its minimum norm.
\end{remark}

\subsection{The infinite series for $n=3$}\label{section:formula3}
In this section we simplify and rearrange the formula in Theorem \ref{thm:MainThm} for $n=3$; this is the case relevant for the computation of $V_1(F)$. The resulting formula can be implemented as a computer algorithm. The impatient reader may skip over the next calculations to Theorem \ref{thm:FinalSeries}.

For $n=3$ Theorem \ref{thm:MainThm} gives the expression
\begin{eqnarray}\label{eqn:Formula3}
 3 \sum_{ m_{1}, m_{2} \geq 0  } \frac{(-1)^{m_1+m_2-1}}{m_1+m_2+4}\,m_1 \sum_{l_1+l_2\le m_1+m_2} \textup{fact}(l_1,l_2) \Tr \left( U(m_1,m_2,l_1,l_2)\right).
\end{eqnarray}
Recall from Lemma \ref{lem:U} that the matrix $U(m_1,m_2,l_1,l_2)$ is a sum of words of the form
$$
	s \cdot V^1_1 \ldots V^1_{m_1} W_1 V^2_1 \ldots V^2_{m_2} W_2.
$$
for $s \in \{\pm 1 \}$. We exploit the symmetry between $(m_1,m_2)$ and $(m_2,m_1)$, and the invariance of the trace under cyclic permutations to simplify (\ref{eqn:Formula3}). 
We will only need to consider traces of matrices of the form $U_1\omega_1 U_2 \omega_2$, and in this way we can disregard the sign $s$ of the permutation. Hence we define:
\begin{df}\label{df:Utilde}
The matrix $\widetilde{U}(m_1,m_2,l_1,l_2)$ is the sum of all the matrices of the form
\begin{equation}\label{eqn:Utilde}
	U_1 V^1_1\ldots V^{1}_{m_1} U_2 V^{2}_{1} \ldots V^{2}_{m_2}
\end{equation}
where $V^i_j \in \{ U_k \colon 0\le k \le 2\}$ for all $i,j$ and $l_k = \left|\{ (i,j) \,|\,\ V^i_j = U_k\}\right|$ for $k=1,2$.
\end{df}
\noindent{}To ease notation let us write $c = \frac{(-1)^{m_1+m_2-1}}{m_1+m_2+4}$ whenever $m_1$ and $m_2$ are clear from the context.
\begin{lem}
$$
\sum_{ m_{1}, m_{2} \geq 0  } c\,m_1 \sum_{|\mathbf{l}|\le|\mathbf{m}|} \textup{fact}(\mathbf{l}) \Tr \left( U(\mathbf{m},\mathbf{l})\right) = 
\sum_{ m_{1}, m_{2} \geq 0  } c\,(m_2-m_1) \sum_{|\mathbf{l}|\le|\mathbf{m}|}  \textup{fact}(\mathbf{l}) \Tr ( \widetilde{U}(\mathbf{m},\mathbf{l})).
$$
\end{lem}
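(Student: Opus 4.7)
The plan is to unpack the definition of $U(\mathbf{m},\mathbf{l})$ for $n=3$ explicitly, use the cyclic invariance of the trace to rewrite its contributions in terms of $\widetilde{U}$, and then exploit the symmetry of the outer weighting factor $c$ under $m_1 \leftrightarrow m_2$.

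Concretely, for $n=3$, the permutation $(W_1,W_2)$ of $(U_1,U_2)$ in Lemma \ref{lem:U} is either the identity (sign $s=+1$) or the transposition (sign $s=-1$). Thus $U(\mathbf{m},\mathbf{l})$ splits as a sum of two families of words:
\begin{eqnarray*}
&&+\;V^{1}_{1}\cdots V^{1}_{m_1}\,U_1\,V^{2}_{1}\cdots V^{2}_{m_2}\,U_2 ,\\
&&-\;V^{1}_{1}\cdots V^{1}_{m_1}\,U_2\,V^{2}_{1}\cdots V^{2}_{m_2}\,U_1 ,
\end{eqnarray*}
ranging over all placements of the $V^i_j$ in $\{U_0,U_1,U_2\}$ with the prescribed multiplicities $(l_1,l_2)$.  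Applying cyclic invariance of the trace, the first family contributes
\[
\Tr\!\bigl(U_1 \,V^{2}_{1}\cdots V^{2}_{m_2}\, U_2\, V^{1}_{1}\cdots V^{1}_{m_1}\bigr),
\]
which, upon relabelling the $V$'s so that the block between $U_1$ and $U_2$ has length $m_2$ and the block after $U_2$ has length $m_1$, is exactly $\Tr(\widetilde{U}(m_2,m_1,l_1,l_2))$ (note $l_1,l_2$ are unchanged since the total counts of $U_1,U_2$ among the $V$'s are preserved). Similarly, the second family gives $-\Tr(\widetilde{U}(m_1,m_2,l_1,l_2))$. Hence
\[
\Tr\bigl(U(\mathbf{m},\mathbf{l})\bigr) = \Tr\bigl(\widetilde{U}(m_2,m_1,l_1,l_2)\bigr) - \Tr\bigl(\widetilde{U}(m_1,m_2,l_1,l_2)\bigr).
\]

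Now multiply by $c\, m_1\,\textup{fact}(\mathbf{l})$ and sum over $m_1,m_2\ge 0$ and $|\mathbf{l}|\le|\mathbf{m}|$. The point is that $c = \tfrac{(-1)^{m_1+m_2-1}}{m_1+m_2+4}$ and the constraint $l_1+l_2\le m_1+m_2$ are both symmetric in $(m_1,m_2)$. Therefore, in the first piece we may relabel $m_1 \leftrightarrow m_2$:
\[
\sum_{m_1,m_2\ge 0} c\,m_1 \!\!\sum_{|\mathbf{l}|\le|\mathbf{m}|}\!\! \textup{fact}(\mathbf{l})\,\Tr\bigl(\widetilde{U}(m_2,m_1,l_1,l_2)\bigr)
= \sum_{m_1,m_2\ge 0} c\,m_2 \!\!\sum_{|\mathbf{l}|\le|\mathbf{m}|}\!\! \textup{fact}(\mathbf{l})\,\Tr\bigl(\widetilde{U}(\mathbf{m},\mathbf{l})\bigr).
\]
Combining with the (unchanged) second piece produces the factor $m_2-m_1$, yielding the claimed identity.

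The only genuinely delicate step is the bookkeeping in the cyclic shift: one must check that after moving the trailing $U_2$ to the front, the roles of the two $V$-blocks actually swap, so that the positive contribution lies in $\widetilde{U}(m_2,m_1,l_1,l_2)$ rather than $\widetilde{U}(m_1,m_2,l_1,l_2)$; this asymmetry between the two sign contributions is precisely what generates the difference $m_2-m_1$ in the final formula.
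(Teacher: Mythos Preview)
Your proof is correct and uses essentially the same mechanism as the paper's: cyclic invariance of the trace together with the symmetry of $c$ and the summation range under $m_1\leftrightarrow m_2$. The only difference is organizational: you first establish the clean pointwise identity $\Tr(U(\mathbf{m},\mathbf{l}))=\Tr(\widetilde U(m_2,m_1,\mathbf{l}))-\Tr(\widetilde U(m_1,m_2,\mathbf{l}))$ and then reindex, whereas the paper fixes a pair of words $\omega_1,\omega_2$ and directly matches the four LHS contributions (from both $(m_1,m_2)$ and $(m_2,m_1)$) against the two RHS contributions.
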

\begin{proof}
Fix $m_1, m_2 \ge 0$. Let $\omega_1$ and $\omega_2$ be arbitrary words on letters $U_i$, $0\le i \le 2$ of lenght $m_1$ respectively $m_2$. On the LHS we have the four words 
$$
	\omega_1 U_1 \omega_2 U_2, \quad \omega_1 U_2 \omega_2 U_1, \quad \omega_2 U_1 \omega_1 U_2 \quad \text{and} \quad \omega_2 U_2 \omega_1 U_1.
$$
The trace of the first and last, and of the second and third are the same, say $t$ and $t'$. Hence we have the four summands $c \, m_1\, t$, $-c \, m_1\, t'$, $c \, m_2\, t'$ and $-c \, m_2\, t$, times the factorial coefficient ($\mathbf{l}$ is the same for all words). Suppose that $m_1\neq m_2$. Then on the RHS we have two words, 
$$
	U_1 \omega_1 U_2 \omega_2 \quad \text{and} \quad U_1 \omega_2 U_2 \omega_1.
$$
By a cyclic permutation, the traces are $t'$ and $t$ respectively. The corresponding summands are then $c \, (m_2-m_1)\, t'$ and $c \, (m_1-m_2)\, t$, multiplied by the same factorial coefficient. Finally, the case $m_1=m_2$ gives 0 on both sides. 
\end{proof}

%\begin{lem}
%$$
%\sum_{ m_{1} > m_{2} \geq 0  } c\,(m_1-m_2) \sum_{\mathbf{l}\le\mathbf{m}} \textup{fact}(\mathbf{l}) \Tr \left( U(\mathbf{m},\mathbf{l})\right) = \sum_{ m_{1}, m_{2} \geq 0  } c\,(m_2-m_1) \sum_{\mathbf{l}\le\mathbf{m}} \textup{fact}(\mathbf{l}) \Tr ( \widetilde{U}(\mathbf{m},\mathbf{l})).
%$$
%\end{lem}
%\begin{proof}
%Consider $m_1 \neq m_2$ (otherwise we get zero on both sides). Let $\omega_1$ and $\omega_2$ be arbitrary words on letters $U_i$, $0\le i \le n-1$ of lenght $m_1$ respectively $m_2$. Suppose $m_1 > m_2$ (the other case is analogous). On the LHS we can construct the words $\omega_1 U_1 \omega_2 U_2$ and $\omega_1 U_2 \omega_2 U_1$, with traces $t$ and $t'$ respectively. The corresponding summands are $c\,(m_1-m_2)\,t$ and $c\,(m_2-m_1)\,t'$. On the RHS we can construct the words $U_1 \omega_2 U_2 \omega_1$ and $U_1 \omega_1 U_2 \omega_2$, with traces $t$ and $t'$ respectively. The summands are also $c\,(m_1-m_2)\,t$ and $c\,(m_2-m_1)\,t'$.
%\end{proof}

Now consider a matrix $U_1 U_{i_1} \ldots U_{i_k}$ with $i_j \in \{0,1,2\}$ and $k \ge 1$. Write $n_i$ for the total number of letters equal to $U_i$, $i=1,2$ among the $U_{i_j}$.
How many times does this matrix appear in a expression of the form (\ref{eqn:Utilde})?
%, and call $t=\Tr(U_1 U_{i_1} \ldots U_{i_k})$.
For each $i_j=2$ it appears in $\widetilde{U}(m_1,m_2,n_1,n_2-1)$ for $m_1 = j-1$ and $m_2 = k-j$, and coefficient 
$$
	\frac{(-1)^{m_1+m_2-1}}{m_1+m_2+4}\,(m_2-m_1)\, \textup{fact}(l_1,l_2) =
	\frac{(-1)^{k-2}}{k+3} \,(k-2j+1)\, \textup{fact}(n_1,n_2-1).
$$
All in all we have proven the following. Write $\chi_a$ for the characteristic function on an integer $a$ (so $\chi_a(a)= 1$ and $\chi_a(i) = 0$ if $i\neq a$).

\begin{thm}[Infinite series for $n=3$]\label{thm:FinalSeries}
Let $X_0, X_1, X_2, X_3 \in GL_N(\mathbb{C})$ and $A=U_0+s_1U_1+s_2U_2$ defined as in Theorem \ref{thm:MainThm} for $n=3$. Suppose that $\norm{A}<1$. 
%Keep the notation and hypotheses of Theorem \ref{thm:MainThm} for $n=3$. 
Then Hamida's function $\varphi(X_0, X_1, X_2, X_3)$ equals the limit of the convergent series
  \begin{eqnarray}\label{eq:FinalSeries} 
	3\, \sum_{k = 1}^\infty \frac{(-1)^{k}}{k+3} \sum_{i_1, \ldots, i_{k} = 0}^{2}
          d \: \textup{fact}(n_1,n_2-1)\: \textup{Tr} \left(U_1U_{i_1} \ldots U_{i_{k}}\right),
  \end{eqnarray}
where $n_1$, $n_2$ and $d$ depend on each tuple $(i_1,\ldots,i_{k})$ as $n_1 =  \sum_j \chi_1(i_j)$, $n_2 = \sum_j \chi_2(i_j)$ and $d  = \sum_{j}\chi_2(i_j)\: (k-2j+1)=n_2(k+1)-2\sum_{j}\chi_2(i_j)\,j$, and $\textup{fact}(l_1,l_2)$ is defined  as in Lemma \ref{lem:fact} for $l_2\ge 0$ and as 0 if $l_2=-1$. 
%$$ \begin{array}{rcl}
% n_1 &=&  \sum_j \chi_1(i_j)\\
% n_2 &=& \sum_j \chi_2(i_j) \text{ and}\\
%c   &=& \sum_{j=1}^{k}I_2(i_j)\: |k-2j+1|\,.
%\end{array} $$
\end{thm}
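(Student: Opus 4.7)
The plan is to derive Theorem \ref{thm:FinalSeries} by specializing Theorem \ref{thm:MainThm} to $n=3$ and reorganizing the resulting series, indexing by the monomials $U_1U_{i_1}\cdots U_{i_k}$ directly rather than by the data $(m_1,m_2,l_1,l_2)$.

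First I would write out (\ref{eqn:Formula3}) and apply the lemma immediately preceding the theorem to replace $m_1 \,\Tr(U(m_1,m_2,l_1,l_2))$ by $(m_2-m_1)\,\Tr(\widetilde U(m_1,m_2,l_1,l_2))$. This lets me work with matrices in the normalized form $U_1V^1_1\cdots V^1_{m_1}U_2V^2_1\cdots V^2_{m_2}$ (Definition \ref{df:Utilde}), which is what the final statement is expressed in.

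Next I would swap the order of summation so that the outer sum runs over the actual monomials $U_1U_{i_1}\cdots U_{i_k}$ (with $i_j \in \{0,1,2\}$ and $k\ge 1$), and count how many times each such monomial appears inside the various $\widetilde U(m_1,m_2,l_1,l_2)$. By Definition \ref{df:Utilde}, a monomial $U_1 U_{i_1}\cdots U_{i_k}$ contributes to $\widetilde U(m_1,m_2,n_1,n_2-1)$ precisely once for each index $j$ with $i_j = 2$: such a $j$ is the position of the distinguished $U_2$, forcing $m_1 = j-1$, $m_2 = k-j$, $l_1 = n_1 = \sum_j \chi_1(i_j)$, and $l_2 = n_2 - 1$ where $n_2 = \sum_j \chi_2(i_j)$. (Monomials containing no $U_2$ contribute nothing, which matches the convention $\textup{fact}(n_1,-1) = 0$.)

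Then I would collect the coefficient attached to a fixed monomial $U_1U_{i_1}\cdots U_{i_k}$. Since $m_1+m_2 = k-1$, the prefactor $(-1)^{m_1+m_2-1}/(m_1+m_2+4)$ equals $(-1)^k/(k+3)$, which pulls outside the inner sum over $j$. For each $j$ with $i_j = 2$ the contribution is $(m_2-m_1) = k-2j+1$, multiplied by the factorial $\textup{fact}(n_1,n_2-1)$, which depends only on the monomial and not on $j$. Summing over valid $j$ gives the coefficient
\[
d \;=\; \sum_{j} \chi_2(i_j)\,(k-2j+1) \;=\; n_2(k+1) - 2\sum_j \chi_2(i_j)\,j,
\]
which together with the prefactor and the overall $3$ from Theorem \ref{thm:MainThm} produces exactly the series (\ref{eq:FinalSeries}). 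Convergence is inherited from the convergence already established in Theorem \ref{thm:MainThm} under the hypothesis $\norm A<1$, since the rearrangement is just a regrouping of finitely many terms at each fixed word length $k$.

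The main bookkeeping obstacle is making sure that (a) the sign and the $(k-2j+1)$ factor are correctly assembled from the $(m_2-m_1)$ factor produced by the preceding lemma, and (b) the boundary case $n_2=0$ (monomials with no $U_2$) is handled, which is why the statement extends $\textup{fact}$ by zero at $l_2=-1$. Beyond these, the argument is purely a reindexing of a known absolutely-convergent (at each length $k$) expansion.
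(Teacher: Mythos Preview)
Your proposal is correct and follows essentially the same approach as the paper: specialize Theorem \ref{thm:MainThm} to $n=3$, apply the preceding lemma to pass from $m_1\Tr(U(\mathbf m,\mathbf l))$ to $(m_2-m_1)\Tr(\widetilde U(\mathbf m,\mathbf l))$, then reindex by monomials $U_1U_{i_1}\cdots U_{i_k}$ and count, for each $j$ with $i_j=2$, the contribution $(m_2-m_1)=k-2j+1$ with $m_1=j-1$, $m_2=k-j$. Your handling of the sign $(-1)^{m_1+m_2-1}=(-1)^k$, the factorial coefficient, and the boundary case $n_2=0$ all match the paper's derivation.
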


%\label{remark:HomologicalTrick1}
We can now explain how to calculate $\varphi(X_0,X_1,X_2,X_3)$ for arbitrary $X_i \in GL_N(\mathbb C)$. Firstly, use Equation (\ref{unitary}) in Remark \ref{rmk:HomoUni} to assume, without loss of generality, that $X_0=I$. Secondly, by Proposition \ref{prop:BoundA}, for any $\mu >0$,
\begin{eqnarray*}
	\varphi(X_0,X_1,X_2,X_3)
		&=& -\; \varphi(\mu X_1,\mu X_2,\mu X_3,I) + \varphi(\mu X_0,\mu X_2,\mu X_3,I)\\ & &-\;\varphi(\mu X_0,\mu X_1,\mu X_3,I)+ \varphi(\mu X_0,\mu X_1,\mu X_2,I).
\end{eqnarray*}
For $\mu$ small enough (see Proposition \ref{prop:BoundA} or Remark \ref{remark:OptimalMu}), each tuple on the right hand side will satisfy the conditions of Theorem \ref{thm:FinalSeries}. In addition, since $X_0=I$, the last three terms satisfy the hypothesis of Lemma \ref{prop:vanishingterm} below, and hence the Hamida function on these tuples vanishes. Therefore
\begin{eqnarray*}
	\varphi(X_0,X_1,X_2,X_3)
		&=& -\; \varphi(\mu X_1,\mu X_2,\mu X_3,I).
\end{eqnarray*}
Finally, we can use the convergent series (\ref{eq:FinalSeries}) to obtain an approximation of the value in the right-hand side.

\begin{lem}\label{prop:vanishingterm}
If $X_0,X_1,X_2,X_3 \in GL_N(\mathbb C)$ satisfy the hypothesis of Theorem \ref{thm:FinalSeries}, and $X_0^\ast{}X_0$ is central (ie.~a scalar multiple of the identity) then $\varphi(X_0,X_1,X_2,X_3)$ vanishes.
\end{lem}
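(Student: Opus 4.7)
Plan: The plan is to show that the Karoubi-Hamida integrand $(\nu^{-1}d\nu)^3$ has trace vanishing \emph{pointwise} on $\Delta^3$, rather than trying to cancel summands in the series (\ref{eq:FinalSeries}). This is sharper and, as a bonus, does not actually invoke the norm hypothesis $\norm{A}<1$. I will work under the convention $X_3=I$ built into the derivation of Theorem \ref{thm:FinalSeries}; in the intended application of the lemma the fourth entry is already $I$, and, given $X_0^\ast X_0=\lambda I$, the unitary invariance of Remark \ref{rmk:HomoUni} even allows one to arrange $X_0=\sqrt{\lambda}\,I$ without disturbing this normalisation.

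In the coordinates $(s,t,u)\in\Delta^3\subset\mathbb R^3$ of Step 1 in \S\ref{section:PowerSeriesFormula}, the centrality of $X_0^\ast X_0=\lambda I$ makes the $s$-dependence of $\nu$ purely scalar:
\[\nu=c(s)\,I+tB_1+uB_2,\qquad c(s)=1+(\lambda-1)s,\qquad B_i=X_i^\ast X_i-I.\]
The central idea is to factor out this scalar, $\nu=c(s)\tilde\nu$ with $\tilde\nu=I+\tilde tB_1+\tilde uB_2$, $\tilde t=t/c(s)$, $\tilde u=u/c(s)$. Since $c(s)$ is scalar this yields
\[\nu^{-1}d\nu=(c^{-1}dc)\,I+\tilde\nu^{-1}d\tilde\nu =:\alpha I+M,\]
where $\alpha=\tfrac{\lambda-1}{c(s)}ds$ is a scalar $1$-form and $M$ is a matrix of $1$-forms in only the two independent differentials $d\tilde t,d\tilde u$; in particular $M^3=0$.

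Two pieces of graded form-algebra then close the argument. The relations $(\alpha I)^2=0$ and $(\alpha I)M=-M(\alpha I)$ (the $1$-form anticommutation) give $(\alpha I+M)^2=M^2$ and hence $(\alpha I+M)^3=\alpha M^2+M^3=\alpha M^2$. A direct expansion gives $M^2=[\tilde\nu^{-1}B_1,\tilde\nu^{-1}B_2]\,d\tilde t\wedge d\tilde u$, whose trace is zero because the trace of a commutator vanishes. Therefore $\Tr\bigl((\nu^{-1}d\nu)^3\bigr)=\alpha\,\Tr(M^2)=0$ pointwise on $\Delta^3$, and integrating yields $\varphi(X_0,X_1,X_2,X_3)=0$.

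The main obstacle I foresee is essentially bookkeeping: the signs in $(\alpha I+M)^3$ must be tracked carefully, and one must observe that although the substitution $\tilde t=t/c(s)$ causes $d\tilde t$ and $d\tilde u$ to contain $ds$-components, they still span a $2$-dimensional subspace of the $1$-forms, so the ``missing variable'' argument $M^3=0$ survives. Conceptually, the normalisation $X_3=I$ must be taken as part of the set-up rather than derived, because right-multiplying the tuple by $X_3^{-1}$ to force $X_3=I$ would destroy the centrality of $X_0^\ast X_0$.
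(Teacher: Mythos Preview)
Your argument is correct and takes a genuinely different route from the paper's. The paper proves Lemma~\ref{prop:vanishingterm} by invoking Proposition~\ref{vanish}, an elaborate combinatorial cancellation in the infinite series (\ref{eq:FinalSeries}): it partitions the words $U_1U_{i_1}\cdots U_{i_k}$ into equivalence classes under cyclic permutation of the nonzero letters, observes that within each class the traces coincide because $U_0$ is central, and then shows by a rather delicate counting argument (the ``red/blue dots on a circle'' lemma) that the integer coefficients $d(i_1,\dots,i_k)$ sum to zero over each class. By contrast, you bypass the series entirely and kill the integrand $\Tr\bigl((\nu^{-1}d\nu)^3\bigr)$ pointwise, using the factorisation $\nu=c(s)\tilde\nu$ to split $\nu^{-1}d\nu=\alpha I+M$ with $M$ living in a $2$-dimensional span of $1$-forms; the observation that $\Tr(M^2)$ is the trace of a commutator is the clean replacement for all of the paper's combinatorics. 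Your approach is shorter, conceptually clearer, and, as you note, does not actually require $\norm{A}<1$.

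One remark on scope. Your proof is written for $X_3=I$, and you correctly observe that this cannot be achieved by homogeneity without spoiling the centrality of $X_0^\ast X_0$. It is worth noting that the paper's own proof has the same implicit restriction: Proposition~\ref{vanish} uses that $U_0$ is central, and under the general-$X_3$ formulas of Theorem~\ref{thm:MainThm} the matrix $U_0$ is (a conjugate of) $X_0^\ast X_0-I$ only after normalising the last entry. Since every invocation of the lemma in the paper (in the paragraph preceding it and in Theorem~\ref{thm:homologicaltrick3}) has $I$ in the last slot, this is harmless for both arguments, but your explicit flagging of the point is more careful than the paper itself.
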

\begin{proof}
This follows easily from Proposition \ref{vanish} (Appendix \ref{comblem}) and Theorem \ref{thm:FinalSeries}.
\end{proof}
All in all, we have the following.
\begin{thm}\label{thm:homologicaltrick3}
Let $X_0,X_1,X_2,X_3 \in GL_N(\mathbb C)$ arbitrary. Define $X_j'=X_jX_0^{-1}$ for each $0\le j \le 3$. Let $\lambda_{\textup{max}} = \max_{0\le j \le n} \lambda_{\textup{max}}\left((X'_j)^\ast X'_j\right)$ and $0 < \mu < \sqrt{\frac{2}{\lambda_\textup{max}}}$. Then
\begin{eqnarray}\label{eqn:simplify}
	\varphi(X_0,X_1,X_2,X_3)
		&=& -\; \varphi(\mu X'_1,\mu X'_2,\mu X'_3,I),
\end{eqnarray}
and the tuple on the right-hand side satisfies the hypothesis of Theorem \ref{thm:FinalSeries}. 
\end{thm}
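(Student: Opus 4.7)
The plan is to combine three ingredients already established: the homogeneity of $\varphi$ (Equation (\ref{homogeneous})), the cocycle/scaling identity of Proposition \ref{prop:BoundA}, and the vanishing Lemma \ref{prop:vanishingterm}.

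First I would use homogeneity with $g = X_0^{-1}$ to reduce to the normalized case:
\[
\varphi(X_0,X_1,X_2,X_3) = \varphi(X_0 X_0^{-1}, X_1 X_0^{-1}, X_2 X_0^{-1}, X_3 X_0^{-1}) = \varphi(I, X_1', X_2', X_3').
\]
Next I would apply Proposition \ref{prop:BoundA} (with $n=3$) to the tuple $(I, X_1', X_2', X_3')$ and the scaling factor $\mu$. This expresses $\varphi(I,X_1',X_2',X_3')$ as the alternating sum
\[
\sum_{i=0}^{3} (-1)^{3+i}\, \varphi\!\left(\mu\cdot (I, X_1', X_2', X_3')_{\widehat{i}},\, I\right),
\]
where $(\cdot)_{\widehat{i}}$ means deleting the $i$-th entry. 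The $i=0$ summand contributes exactly $-\varphi(\mu X_1', \mu X_2', \mu X_3', I)$, which is the right-hand side of the theorem.

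The key step is to show that each of the remaining three terms (corresponding to $i=1,2,3$) vanishes. In each of these, the first argument is $\mu I$, so $(\mu I)^{\ast}(\mu I) = \mu^{2} I$ is a scalar multiple of the identity and hence central. Before invoking Lemma \ref{prop:vanishingterm} I must check that each such tuple satisfies the hypothesis of Theorem \ref{thm:FinalSeries}, i.e.\ that the associated matrix $A$ has spectral norm less than $1$. This is exactly what Proposition \ref{prop:BoundA} supplies, once we verify that the bound $0<\mu<\sqrt{2/\lambda_{\max}}$ (with $\lambda_{\max}$ taken over all four $(X_j')^{\ast}X_j'$) dominates the analogous bound needed for every tuple produced by the expansion. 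Since the four tuples are all subsets of $(I,X_1',X_2',X_3')$, and since $(X_0')^{\ast}X_0' = I$ has maximum eigenvalue $1 \le \lambda_{\max}$, the single bound $\mu<\sqrt{2/\lambda_{\max}}$ indeed suffices uniformly. With this in hand, Lemma \ref{prop:vanishingterm} eliminates the three central-first-argument terms, and the identity reduces to $-\varphi(\mu X_1', \mu X_2', \mu X_3', I)$ as claimed. The final sentence of the theorem (the right-hand tuple satisfies the hypothesis of Theorem \ref{thm:FinalSeries}) is again just the conclusion of Proposition \ref{prop:BoundA} applied to the $i=0$ summand.

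The main obstacle, such as it is, lies not in the algebra but in the bookkeeping: making sure the bound on $\mu$ is uniform across all four tuples appearing in the expansion so that both Proposition \ref{prop:BoundA} and (through it) Lemma \ref{prop:vanishingterm} apply simultaneously. Everything else is a direct substitution of the earlier results.
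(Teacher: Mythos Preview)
Your proposal is correct and follows essentially the same route as the paper: normalize so that the first entry becomes $I$, expand via Proposition~\ref{prop:BoundA} into four scaled tuples, and kill the three tuples whose first entry is $\mu I$ using Lemma~\ref{prop:vanishingterm}. Your use of homogeneity (Equation~(\ref{homogeneous})) with $g=X_0^{-1}$ is in fact the correct justification for the substitution $X_j\mapsto X_j'=X_jX_0^{-1}$ appearing in the theorem statement (the paper's text cites Equation~(\ref{unitary}) here, which appears to be a slip), and your extra remark that $\lambda_{\max}\bigl((X_0')^\ast X_0'\bigr)=1\le\lambda_{\max}$ makes explicit why the single bound on $\mu$ suffices for all four tuples.
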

\begin{cor}\label{cor:vanishing}
Let $X_0,X_1,X_2,X_3 \in GL_N(\mathbb C)$ arbitrary. If $(X_1X_0^{-1})^\ast X_1X_0^{-1}$ is central, then $\varphi(X_0,X_1,X_2,X_3)$ vanishes.
\end{cor}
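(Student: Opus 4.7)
The plan is to combine Theorem \ref{thm:homologicaltrick3} with Lemma \ref{prop:vanishingterm}; essentially nothing remains beyond tracing which matrix plays the role of ``$X_0$'' in the reduced tuple.

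First, I would apply Theorem \ref{thm:homologicaltrick3} with $X'_j = X_j X_0^{-1}$ and a suitably small $\mu > 0$, obtaining the identity
\[
	\varphi(X_0,X_1,X_2,X_3) = -\, \varphi(\mu X'_1,\mu X'_2,\mu X'_3,I),
\]
together with the fact that the tuple $(\mu X'_1,\mu X'_2,\mu X'_3,I)$ satisfies $\|A\|<1$, i.e.~the hypothesis of Theorem \ref{thm:FinalSeries}.

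Next, I would observe that in the right-hand tuple the matrix playing the role of ``$X_0$'' in Lemma \ref{prop:vanishingterm} is $\mu X'_1 = \mu X_1 X_0^{-1}$. The corresponding conjugate-transpose product is
\[
	(\mu X'_1)^\ast (\mu X'_1) = \mu^2 (X_1 X_0^{-1})^\ast (X_1 X_0^{-1}),
\]
which, by the hypothesis of the corollary, is a scalar multiple of a central matrix and therefore central itself.

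Finally, Lemma \ref{prop:vanishingterm} applies directly to $(\mu X'_1,\mu X'_2,\mu X'_3,I)$ and gives $\varphi(\mu X'_1,\mu X'_2,\mu X'_3,I)=0$, whence $\varphi(X_0,X_1,X_2,X_3)=0$. There is no serious obstacle here: the whole content is that Theorem \ref{thm:homologicaltrick3} reduces the general case to one where the new ``$X_0$'' is (a scalar multiple of) $X_1 X_0^{-1}$, which is precisely the centrality hypothesis imposed on the original tuple.
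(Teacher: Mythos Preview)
Your proof is correct and follows exactly the paper's approach: apply equation (\ref{eqn:simplify}) from Theorem \ref{thm:homologicaltrick3} to reduce to the tuple $(\mu X'_1,\mu X'_2,\mu X'_3,I)$, then invoke Lemma \ref{prop:vanishingterm} using that $(\mu X'_1)^\ast(\mu X'_1)=\mu^2(X_1X_0^{-1})^\ast(X_1X_0^{-1})$ is central. The paper's proof is simply the one-line ``Use (\ref{eqn:simplify}) and Lemma \ref{prop:vanishingterm}'', and you have correctly unpacked the details.
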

\begin{proof}
Use (\ref{eqn:simplify}) and Lemma \ref{prop:vanishingterm}.
\end{proof}

%%%%%APPENDIX%%%%%
\appendix
\section{Computational aspects}\label{section:ComputerAlgorithm}
One advantage of the formula in Theorem \ref{thm:FinalSeries} for Hamida's integral evaluated at $H_3\left(GL(\mathbb C)\right)$ is that it admits a straightforward implementation as a computer algorithm. In this appendix we discuss details relevant to  an implementation of the formula in Theorem \ref{thm:FinalSeries} or the computation of $V_1(F)$ for a cyclotomic field $F$.

The algorithm to compute Hamida's function takes as input matrices $X_0$, $X_1$, $X_2$, $X_3 \in GL_N(\mathbb{C})$, performs the `homological trick' of \S{}\ref{section:HomologicalTrick} and outputs the partial sum of (\ref{eq:FinalSeries}) up to a given $k$. These partial sums converge to Hamida's function $\varphi(X_0,X_1,X_2,X_3)$, and we have an upper bound for the error after $k$ iterations (Appendix \ref{error}). Therefore, we can in principle compute the universal Borel class $b_1$ evaluated at $H_3(GL_N(\mathbb{C}))$ (via Theorem \ref{thm:Hamida}) to any prescribed degree of accuracy. This, together with the input chain $q Z_1 - Z_2(q)$ (see \S\ref{section:basis}) allow the computation of $V_1(F)$ for any cyclotomic field to any required precision. We have included a complete description of the algorithm in Appendix \ref{section:pseudocode}.

However, the computational complexity of this algorithm is exponential on $k$ (Appendix \ref{section:complexity}): the computing time of the $(k+1)$th iteration is roughly 3 times that of the $k$th iteration. The error after $k$ iterations is bounded by a constant times $\|A\|^k \, k$ (see Appendix \ref{error}) so it will nevertheless converge fast to zero, unless $\|A\|$ is close to one. Unfortunately, this seems to be the case in practice: for instance, more than 86\% of the terms in $Z_1\vert_{t=e^{2\pi i/3}}$ have an associated matrix $A$ of norm greater than 0.9, even for the optimal choice of parameter $\mu$ (as in Lemma \ref{optimal}).

%More positively, many of the terms in the universal chain\todo{how many}, which are of the form $(g_1|g_2|g_3)=(1,g_1,g_1g_2,g_1g_2g_3)$, satisfy $g_1^\ast{}g_1$ central and hence by Corollary \ref{cor:vanishing} the Hamida function vanishes on these terms. In addition, there is a barycentric argument that allows to substitute a tuple $(X_0,X_1,X_2,X_3)$ with associated matrix $\|A\|$ by an alternating sum of 24 tuples with associated matrices of norm at most $\frac{3}{4}\|A\|$, thus increasing the convergence of the error upper bound of \ref{error}.

A naive implementation with symbolic mathematical software (we used Maple \cite{Maple13}) is not very efficient (it takes on average 12$s$ per term with $k=3$). A numerical implementation (we used Matlab \cite{MatlabR2011b}) is much faster (it takes on average 1.96$s$ per term with $k=8$) but it is still not fast enough to guarantee a small error within reasonable time, according to our error formula. 

It is clear that either a more efficient implementation, or a tighter bound on the error, or an alternative `homological trick' is needed.
In any case, the purpose of the present article is to describe the theory behind a new approach to compute Borel's regulator, and it is our hope that more accurate implementations will be built on these foundations. 

In Appendix \ref{section:Numerical} we will discuss the results of a different (C++) implementation of the same algorithm made by the $1^{\rm st}$ author in the course of his doctoral thesis \cite{ZackyPhD}.

\subsection{Complete algorithm}\label{section:pseudocode}
\fbox{Computation of $\varphi(X_0,X_1,X_2X_3)$ for arbitrary $X_0,X_1,X_2,X_3 \in GL_N(\mathbb C)$}
\begin{enumerate}
\item Define
\[
\begin{array}{rclcrcll}
	\lambda_\text{max} &=& \max_{0 \le i \le 3} \lambda_\text{max}(X_i^*X_i), & \ & Y_i &=& \mu X_i X_0^{-1} & (i=1,2,3),\\
	\lambda_\text{min} &=& \max_{0 \le i \le 3} \lambda_\text{min}(X_i^*X_i), & & U_0 &=& Y_1^\ast Y_1 - I,\\
\mu &=& \sqrt{2/(\lambda_\text{max}+\lambda_\text{min})}, & & U_i &=& Y_i^\ast Y_i - Y_1^\ast Y_1 & (i=2,3).

\end{array}
\]
\item For each $k_\text{max} \ge 1$, consider the finite sum
\begin{eqnarray}\label{eqn:FiniteSum}
	3\, \sum_{k = 1}^{k_\text{max}} \frac{(-1)^{k}}{k+3} \sum_{i_1, \ldots, i_{k} = 0}^{2}
          d(i_1,\ldots,i_{k}) \: \textup{fact}(n_1,n_2-1)\: \textup{Tr} \left(U_1U_{i_1} \ldots U_{i_{k}}\right),
  \end{eqnarray}
with $n_1$, $n_2$, $d$ and $\textup{`fact'}$ defined as in Theorem \ref{thm:FinalSeries}.
%where $n_1$ respectively $n_2$ equals the number of 1's respectively 2's in the tuple $(i_1,\ldots,i_{k})$, $d$ depend on each tuple $(i_1,\ldots,i_{k})$ as 
%\begin{eqnarray*} 
%d(i_1,\ldots,i_{k})  &=& n_2(k+1)-2\sum_{j=1}^k\chi_2(i_j)\,j\,,
%\end{eqnarray*}
%and $\textup{fact}(l_1,l_2)$ is defined as $\frac{l_1!l_2!}{(l_1+l_2+2)!}$ if $l_1,l_2 \ge 0$ integers, and 0 if $l_2=-1$.
Then (\ref{eqn:FiniteSum}) approximates $\varphi( Y_1,  Y_2, Y_3, I)=-\varphi(X_0,X_1,X_2,X_3)$, with an error bounded above by
\[
	 \frac{N \norm{U_1}\norm{U_2}}{(1-\rho)^2} \rho^{k_\text{max}} \left(1+k_{\text{max}}(1-\rho)\right),
\]
where $\rho=\frac{\lambda_\text{max}-\lambda_\text{min}}{\lambda_\text{max}+\lambda_\text{min}}$ (Corollary \ref{cor:Error} and Lemma \ref{lem:Bounds}(ii)).
\end{enumerate}

\bigskip

\noindent\fbox{Computation of $b_1$ evaluated at a term in the inhomogeneous bar resolution}\\[0.5em]
\noindent{}Let  $[g_1|g_2|g_3] \in B_3$ (notation of \S\ref{3.0}).
By writing $[g_1|g_2|g_3]=(I,g_1,g_1g_2,g_1g_2g_3)$ and using Theorem \ref{thm:Hamida} we have that the universal Borel class evaluated at $[g_1|g_2|g_3]$ is
\[
	b_1([g_1|g_2|g_3]) = \frac{1}{16\pi i} \, \varphi\left(I,g_1^\ast,(g_1g_2)^\ast,(g_1g_2g_3)^\ast\right).
\]
We can then compute the right-hand side by the algorithm above. More generally, any element $z \in B_3$ can be written as a finite sum of elements as above.

\bigskip

\noindent\fbox{Computation of $V_1$ for a cyclotomic field}\\[0.5em]
Let  $F=\mathbb Q\left(\xi\right)$ be a cyclotomic field and suppose that $\xi=e^{2\pi i/q}$, for some integer $q \ge 3$.
Let $Z_1$ be the universal chain (recall we have found an explicit representative\footnote{see ancillary file anc/universalchain.mat}) and define $Z_2(q)$ as in Definition \ref{Z2}. Let $\mathcal Z=qZ_1-Z_2(q) \in B_3(\mathbb Z[t,t^{-1}])$. Let and $\pm v_1,\ldots, \pm v_l$ the units in $\mathbb Z/q\mathbb Z$. For each $1 \le i,j \le l$ define $\mathcal Z_{ij}$ by the substitution $t=\xi^{v_i v_j}$ in $\mathcal Z$,
\[
	\mathcal Z_{ij}=\mathcal Z_{|t=\xi^{v_i v_j}} \in B_3(F).
\]
Then $z_{ij}=b_1(\mathcal Z_{ij})$ can be evaluated by our algorithm above. Finally, $V_1(F)$ equals the absolute value of the determinant of the matrix whose $(i,j)$-entry is $z_{ij}$ (Theorem \ref{mothergoose}).

\subsection{Complexity}\label{section:complexity}
The computational complexity of our proposed algorithm (Appendix \ref{section:pseudocode}) is exponential on $k$: for each term in a homological cycle (for the chain $qZ_1 - Z_2(q)$ we have $6844 + 3q$ terms) we compute an instance of the infinite series in Theorem \ref{thm:FinalSeries} (cf.~Theorem \ref{thm:homologicaltrick3}); for each series, we add successive terms until the error is smaller than the target error; finally, the $k^{\rm th}$ summand of the series in Theorem \ref{thm:FinalSeries} involves conducting $3^k$ different multiplications of $k+1$ matrices of size $N$ (for the chain $qZ_1-Z_2(q)$ we have $N=5$).
\begin{remark} For a general $n > 3$ odd we expect an infinite series analogous to the one in Theorem \ref{thm:FinalSeries}. Thus for each term in a homological cycle we would have $n$ instances of the infinite series, and in turn the $k^{\rm th}$ summand in a series would involve $n^k$ multiplications of $k+1$ matrices of size typically $2n+1$.
\end{remark}

%\subsubsection{Validation}
%We validated our code via random matrix experiments: the output is always purely imaginary (cf.~\ref{section:define}); we obtain zero when there are two repeated entries $X_i=X_{i+1}$ and \todo{reference to combinatorial lemma/homological trick}. 

\subsection{Error bound}\label{error}
This section is devoted to bounding the error in computing the infinite series (\ref{eq:FinalSeries}) after $k$ iterations. The impatient reader may refer to Proposition \ref{prop:Error} and Corollary \ref{cor:Error}.

\medskip

Throughout this section, let $\norm{\cdot}$ be the spectral norm.
\begin{lem}\label{lem:Lemma1}
Let $X$ be a hermitian matrix of size $N$. Then
$$
	|\textup{Tr}(X)| \le  N \norm{X}\,.
$$
\end{lem}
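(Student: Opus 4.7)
The plan is to pass from the matrix $X$ to its eigenvalues, since both sides of the inequality can be expressed cleanly in those terms. First I would recall that a hermitian matrix has real eigenvalues $\lambda_1, \ldots, \lambda_N$, and that the trace is the sum of these eigenvalues while the spectral norm, for a hermitian matrix, equals $\max_i |\lambda_i|$ (this is property (iii) of the spectral norm listed in Section \ref{section:HomologicalTrick}).

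With this in hand the proof is a one-line estimate: apply the triangle inequality to $\textup{Tr}(X) = \sum_{i=1}^N \lambda_i$ to get $|\textup{Tr}(X)| \le \sum_i |\lambda_i|$, and then bound each summand by $\max_i |\lambda_i| = \norm{X}$, producing the factor $N$. There is no real obstacle here; the only thing to be careful about is invoking the correct characterization of $\norm{X}$ for hermitian $X$, which was already recorded in the bulleted list preceding the lemma.
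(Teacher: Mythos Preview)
Your proposal is correct and matches the paper's proof essentially verbatim: the paper also writes $|\textup{Tr}(X)| = \big|\sum_i \lambda_i\big| \le N\norm{X}$ using $\norm{X} = \max_i |\lambda_i|$ for hermitian $X$.
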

\begin{proof}
If $\lambda_1,\ldots,\lambda_N$ are the eigenvalues of $X$ then $\norm{X}=\max_i |\lambda_i|$ and hence
\[
    |\textup{Tr}(X)| = \Big|\sum_{i} \lambda_i\Big| \le  N \norm{X}\,. \qedhere
\]
\end{proof}

\begin{lem}\label{lem:TraceVol}
Let $X$ be a matrix of size $N$ of continuous functions over a compact subset $\Delta \subset \mathbb{R}^n$. Suppose that $X(\mathbf{s})$ is hermitian for each $\mathbf{s} \in \Delta$ and define $\norm{X} = \max_{\mathbf{s}\in \Delta} \norm{X(\mathbf{s})}$. Then
$$
\left| \Tr \int_\Delta X(\mathbf{s}) \,d\mathbf{s}  \right| \le \operatorname{vol}(\Delta) \, N \, \norm{X}\,.
$$
\end{lem}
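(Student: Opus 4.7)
The plan is a straightforward two-step reduction to the pointwise statement already established in Lemma \ref{lem:Lemma1}.

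First I would commute trace and integration. Since the trace is a finite $\mathbb{C}$-linear functional on $M_N(\mathbb{C})$ (simply the sum of diagonal entries), and each entry of $X$ is a continuous (hence integrable) complex function on the compact set $\Delta$, we have
\[
	\Tr \int_\Delta X(\mathbf{s})\, d\mathbf{s} \;=\; \int_\Delta \Tr X(\mathbf{s})\, d\mathbf{s}.
\]
Both sides are ordinary (complex) integrals of a scalar continuous function, so the usual triangle inequality for integrals applies:
\[
	\left|\int_\Delta \Tr X(\mathbf{s})\, d\mathbf{s}\right| \;\le\; \int_\Delta \bigl|\Tr X(\mathbf{s})\bigr|\, d\mathbf{s}.
\]

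Second, I would apply Lemma \ref{lem:Lemma1} pointwise. Since $X(\mathbf{s})$ is hermitian for every $\mathbf{s}\in\Delta$, the lemma gives $|\Tr X(\mathbf{s})| \le N \norm{X(\mathbf{s})}$, and by definition $\norm{X(\mathbf{s})} \le \norm{X}$. Combining,
\[
	\int_\Delta \bigl|\Tr X(\mathbf{s})\bigr|\, d\mathbf{s} \;\le\; \int_\Delta N \norm{X}\, d\mathbf{s} \;=\; N\, \norm{X}\, \operatorname{vol}(\Delta),
\]
which gives the claimed bound.

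There is no real obstacle here; the only point worth checking is that the supremum $\norm{X} = \max_{\mathbf{s}\in\Delta}\norm{X(\mathbf{s})}$ is attained (and in particular finite), which follows because the spectral norm $\norm{\cdot}$ is a continuous function on $M_N(\mathbb{C})$, $\mathbf{s}\mapsto X(\mathbf{s})$ is continuous by hypothesis, and $\Delta$ is compact.
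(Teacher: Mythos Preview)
Your proof is correct and follows exactly the same approach as the paper: commute trace with the integral, apply the triangle inequality, and then use Lemma~\ref{lem:Lemma1} pointwise together with the definition of $\norm{X}$. The paper's version is simply the one-line chain of inequalities you spelled out.
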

\begin{proof}
By elementary properties of integration and Lemma \ref{lem:Lemma1},
\[
\left| \Tr \int_\Delta X(\mathbf{s}) \,d\mathbf{s}  \right| = \left| \int_\Delta \Tr \left(X (\mathbf{s})\right) d\mathbf{s} \right| \le \int_\Delta \left| \Tr \left( X(\mathbf{s})\right) \right| d\mathbf{s} \le \operatorname{vol}(\Delta) \, N \, \norm{X}\,. \qedhere
\]
\end{proof}
\begin{prop}\label{prop:Error}
Let $a_k$ be the $k^{\rm th}$ summand of the series in Theorem \ref{thm:FinalSeries}, and keep the same hypothesis and notation. Then
\[
	|a_k| \le N \, \norm{U_1} \, \norm{U_2} \, k \,\norm{A}^{k-1}\,.
\]
\end{prop}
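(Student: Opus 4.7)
The plan is to recast $a_k$ as a sum of integrals over $\Delta^{2}$ and then bound each integrand by submultiplicativity of the spectral norm. This essentially undoes the combinatorial expansion of \S\ref{section:formula3} at each fixed $k$.

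First I would group the inner sum in $a_k$ by the position $j$ at which $i_j = 2$. Using $d = \sum_{j} \chi_2(i_j)(k - 2j + 1)$ and exchanging the order of summation, the contribution from each $j$ equals $(k-2j+1)$ times $\sum_{(i_l)_{l \neq j}} \textup{fact}(n_1, n_2 - 1) \Tr(U_1 U_{i_1} \cdots U_{i_{j-1}} U_2 U_{i_{j+1}} \cdots U_{i_k})$. Expanding $A^{j-1}$ and $A^{k-j}$ as polynomials in $s_1, s_2$ (cf.~Definition \ref{df:Utilde}) and using Lemma \ref{lem:fact} to identify $\textup{fact}(n_1, n_2-1)$ with $\int_{\Delta^2} s_1^{n_1} s_2^{n_2-1}\, ds_1 ds_2$, I expect this inner sum to collapse to $\Tr\int_{\Delta^2} U_1 A^{j-1} U_2 A^{k-j}\, ds_1 ds_2$. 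Setting $m_1 = j-1$ and $m_2 = k-j$, so that $m_2 - m_1 = k - 2j + 1$, this gives
\[
a_k = \frac{3(-1)^k}{k+3} \sum_{m_1 + m_2 = k-1} (m_2 - m_1) \,\Tr\int_{\Delta^2} U_1 A^{m_1} U_2 A^{m_2}\, ds_1 ds_2.
\]

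Next I would bound each summand. For any $N \times N$ complex matrix $X$ one has $|\Tr(X)| \le N \norm{X}$ (each eigenvalue is bounded in absolute value by the spectral norm), and submultiplicativity yields $\norm{U_1 A^{m_1} U_2 A^{m_2}} \le \norm{U_1} \norm{U_2} \norm{A}^{k-1}$. Together with $\textup{vol}(\Delta^2) = 1/2$, this gives a bound of $(N/2)\norm{U_1}\norm{U_2}\norm{A}^{k-1}$ per term. A direct calculation shows $\sum_{m_1=0}^{k-1} |k - 1 - 2 m_1| \le k^2/2$. Combining everything,
\[
|a_k| \le \frac{3 k^2}{4(k+3)} \, N \norm{U_1} \norm{U_2} \norm{A}^{k-1} \le k \, N \norm{U_1} \norm{U_2} \norm{A}^{k-1},
\]
the final inequality being equivalent to the trivial $3k \le 4(k+3)$.

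The main obstacle will be the first step: recognising the opaque combinatorial sum over tuples $(i_1,\ldots,i_k) \in \{0,1,2\}^k$ as a clean matrix integral. Once the identifications from Lemma \ref{lem:fact} and Definition \ref{df:Utilde} are made explicit, the remaining estimates are routine. It is worth noting that the particular rearrangement of \S\ref{section:formula3}---replacing the factor $m_1$ from Theorem \ref{thm:MainThm} by $(m_2 - m_1)$---is what enables the sharp $O(k)$ constant; the naive bound obtained directly from Theorem \ref{thm:MainThm} using $\sum m_1 = k(k-1)/2$ gives asymptotically $\tfrac{3}{2}k \cdot N \norm{U_1}\norm{U_2}\norm{A}^{k-1}$, which exceeds the claimed bound for large $k$.
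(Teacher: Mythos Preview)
Your proof is correct and follows essentially the same strategy as the paper: re-express $a_k$ as a sum of traces of matrix integrals over $\Delta^2$, then bound each integrand via submultiplicativity of the spectral norm. The only difference is how far back you unwind. The paper retraces one step further than you do, past the Lemma of \S\ref{section:formula3}, to the form
\[
a_k \;=\; \sum_{m_1+m_2=k-1}\frac{(-1)^k}{k+3}\,m_1\,\Tr\int_{\Delta^2} A^{m_1}\,dA\,A^{m_2}\,dA
\]
(notably \emph{without} the leading factor $3$), then expands $dA=U_1\,ds_1+U_2\,ds_2$ into two terms, bounds each trace-integral by $\tfrac{1}{2}N\norm{U_1}\norm{U_2}\norm{A}^{k-1}$ via Lemma~\ref{lem:TraceVol}, and finishes with the crude estimate $m_1<k$ summed over the $k$ pairs. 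Your route via the $(m_2-m_1)$ form is slightly more direct---only one integral per pair, and you retain the factor $3$---at the cost of the short calculation $\sum_{m_1=0}^{k-1}|k-1-2m_1|\le k^2/2$. Your closing remark is apt: if one reads $a_k$ as including the overall factor $3$ from Theorem~\ref{thm:FinalSeries}, then the $(m_2-m_1)$ rearrangement is genuinely what keeps the constant at $k$ rather than $\tfrac{3}{2}k$.
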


\begin{cor} \label{cor:Error}
Let $C=N \, \norm{U_1} \, \norm{U_2}$, $\rho=\norm{A}<1$ and $b_k = k \,\rho^{k-1}$ for each $k \ge 1$. Then 
%the geometric series $\sum_{k=1}^\infty b_k$ converges to $L=\frac{}{(1-\rho)^2}$ and
\[
	\left| \sum_{k=l+1}^\infty a_k \right| \le C \sum_{k=l+1}^\infty b_k =  
	C \, \rho^l \left( \frac{1+l(1-\rho)}{(1-\rho)^2}\right).
%\left(\frac{1}{(1-\norm{A})^2} - \sum_{k=1}^l b_k\right).
\]
\end{cor}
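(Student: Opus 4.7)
The plan is to derive the corollary directly from Proposition~\ref{prop:Error} in two steps: a triangle inequality bound followed by an explicit evaluation of the tail of an arithmetico-geometric series. Since Proposition~\ref{prop:Error} already gives $|a_k| \le C b_k$ with $C = N\,\|U_1\|\,\|U_2\|$ and $b_k = k\rho^{k-1}$, the first inequality
\[
   \left|\sum_{k=l+1}^\infty a_k\right| \le \sum_{k=l+1}^\infty |a_k| \le C \sum_{k=l+1}^\infty b_k
\]
is immediate (the convergence of the right-hand side, justifying the use of triangle inequality for an infinite sum, is a by-product of the closed-form evaluation below, which shows it is finite because $\rho < 1$).

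The main (and only) computation is then to evaluate $\sum_{k=l+1}^\infty k\rho^{k-1}$ in closed form. I would perform the index shift $j = k - l - 1$, so that
\[
   \sum_{k=l+1}^\infty k\rho^{k-1} = \rho^l \sum_{j=0}^\infty (j+l+1)\rho^{j},
\]
and split the right-hand side into the standard geometric series $\sum_j \rho^j = (1-\rho)^{-1}$ and its derivative $\sum_j j\rho^{j-1} = (1-\rho)^{-2}$, both of which converge for $\rho < 1$. Combining the two fractions over the common denominator $(1-\rho)^2$ and simplifying the numerator $\rho + (l+1)(1-\rho) = 1 + l(1-\rho)$ yields the claimed expression
\[
   C \rho^l \left(\frac{1+l(1-\rho)}{(1-\rho)^2}\right).
\]

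There is no real obstacle here: the result is a routine consequence of Proposition~\ref{prop:Error} together with the well-known tail estimate for the arithmetico-geometric series $\sum k\rho^{k-1}$. The only point to be slightly careful about is the algebraic simplification of the numerator after the index shift, which as noted above collapses neatly to $1 + l(1-\rho)$.
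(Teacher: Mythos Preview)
Your proof is correct and matches the paper's approach: the paper does not give an explicit argument for this corollary, treating it as an immediate consequence of Proposition~\ref{prop:Error} via the triangle inequality and the standard tail formula for $\sum k\rho^{k-1}$, which is exactly what you have written out in detail.
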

\noindent{}Note that the expression on the left-hand side is the absolute error of approximating $\sum_{k=1}^\infty a_k$ by $\sum_{k=1}^l a_k$, the output of the algorithm after $l$ iterations. The upper bound on the right-hand side converges to zero as $l \to \infty$.
\begin{remark}
Recall that Lemma \ref{lem:Bounds}(ii) allows us to have an explicit value for $\rho=\norm{A}$.
%gives the explicit formula: $\norm{A} = 1 - \min_j \lambda_\textup{min}(X_j^\ast X_j)$.
\end{remark}
\begin{proof}[Proof of Proposition]
In order to attain a neat upper bound depending on $\norm{A}$, we retrace our steps in \S\ref{section:formula3} and \S\ref{section:PowerSeriesFormula} to write
\[
	a_k =  \sum_{m_1 + m_2 = k-1 } \frac{(-1)^{k}}{k+3} \, m_1 \Tr \int_{\Delta^2} A^{m_1}\,dA\,A^{m_2}\,dA\,,
\]
since $a_k$ involves all the words of total lenght $k+1$. Recall that $dA = U_1 ds_1 + U_2 ds_2$. Then
\[
	\Tr \int_{\Delta^2} A^{m_1}\,dA\,A^{m_2}\,dA =   \Tr \int_{\Delta^2} A^{m_1}U_1A^{m_2}U_2 ds_1 ds_2 - \Tr \int_{\Delta^2} A^{m_1}U_2A^{m_2}U_1ds_1 ds_2.
\]
By the triangle inequality and the Lemma \ref{lem:TraceVol},
\begin{eqnarray*}
\left| \Tr \int_{\Delta^2} A^{m_1}\,dA\,A^{m_2}\,dA\right| &\le& \operatorname{vol}(\Delta^2)\, N \, \left( \norm{A^{m_1}U_1A^{m_2}U_2} + \norm{A^{m_1}U_2A^{m_2}U_1} \right) \\
	&\le& N \, \norm{U_1} \, \norm{U_2} \, \norm{A}^{k-1}.
\end{eqnarray*}
There are $k$ pairs of nonnegative integers $(m_1,m_2)$ such that $m_1+m_2 = k-1$. Hence 
$$
	|a_k| \le k \, \frac{1}{k+3}\, k \, N \, \norm{U_1} \, \norm{U_2} \, \norm{A}^{k-1} \,.
$$
Observing that $\frac{k}{k+3} \le 1$ finishes the proof.
\end{proof}

\subsection{Vanishing of terms where $X_0^\ast{}X_0$ is central}\label{comblem}
Recall that Theorem \ref{thm:homologicaltrick3} and Corollary \ref{cor:vanishing} depend on Lemma \ref{prop:vanishingterm}, which is in turn a consequence of the following result.

\begin{prop} \label{vanish} 
Keep the notation and hypothesis of Theorem \ref{thm:FinalSeries}. Let $k \ge 1$, $n_1, n_2 \ge 0$ be integers.  If $X_0^\ast{}X_0$ is central then
\begin{eqnarray} \label{SSum} 
	\sum_{(i_1, \ldots, i_{k})\in I}
          d(i_1, \ldots, i_{k}) \,  \textup{Tr} \left(U_1U_{i_1} \ldots U_{i_{k}}\right)=0,
 \end{eqnarray}
\noindent where $I$ denotes the set of all sequences of length $k$ which have $n_1$ terms equal to $1$, $n_2$ terms equal to $2$, and the remaining terms equal to $0$, and $d \colon I \to \mathbb Z$ is defined as $ d(i_1, \ldots, i_{k})=n_2(k+1)-2\sum_{i_j=2}j$.
\end{prop}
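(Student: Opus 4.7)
The plan is to exploit the centrality of $U_0$ together with the cyclic invariance of the trace. Writing $X_0^\ast X_0 = \lambda I$, the matrix $U_0 = (\lambda-1)I$ is central, so for every $\mathbf{i} = (i_1,\ldots,i_k) \in I$ we may commute the $U_0$-factors past the others to get $\textup{Tr}(U_1 U_{i_1}\cdots U_{i_k}) = (\lambda-1)^{n_0}\,\textup{Tr}(U_1 W(\mathbf{i}))$, where $n_0 = k-n_1-n_2$ and $W(\mathbf{i})$ is the subword of the non-zero $U_{i_j}$'s in the order they appear. First I would group the summands in (\ref{SSum}) by the common non-zero pattern $\omega = (V_2,\ldots,V_N) \in \{U_1,U_2\}^{N-1}$, where $N = n_1+n_2+1$; the trace is constant on each group and the remaining task is to sum $d(\mathbf{i})$ over the $\binom{k}{n_0}$ placements of the zeros.

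For that inner sum, observe that $\sum_{j\,:\,i_j=2}j$ equals $\sum_\ell q_{p_\ell'}$, where the $q_m$'s are the order statistics of the uniformly random set of non-zero positions in $\{1,\ldots,k\}$ and the $p_\ell'$'s record where the $U_2$'s appear inside $\omega$. Using the expectation $\mathbb{E}[q_m] = m(k+1)/N$ for the $m$-th order statistic of a uniform $(N-1)$-subset of $\{1,\ldots,k\}$ gives, after a short calculation,
\[
	\sum_{\text{placements}} d(\mathbf{i}) \;=\; \frac{(k+1)\binom{k}{n_0}}{N}\bigl(n_2(N+2)-2f(V)\bigr),
\]
where $V = (U_1,V_2,\ldots,V_N)$ and $f(V) = \sum_{j\,:\,V_j=U_2} j$. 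Thus (\ref{SSum}) reduces to proving $\sum_V T(V)\bigl(n_2(N+2)-2f(V)\bigr) = 0$, the sum ranging over length-$N$ words $V$ with $V_1=U_1$ containing $n_1+1$ copies of $U_1$ and $n_2$ copies of $U_2$, with $T(V) = \textup{Tr}(V_1\cdots V_N)$.

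Cyclic invariance of $T$ then lets me partition these words into cyclic equivalence classes and reduce to the vanishing on each class $C$ with primitive period $R\mid N$, namely $\sum_{V'\in C_1}\bigl(n_2(N+2)-2f(V')\bigr) = 0$, where $C_1\subset C$ consists of the elements beginning with $U_1$. Picking any representative $V\in C$ and summing $f(\sigma^c V)$ over all $c\in\{0,\ldots,N-1\}$ with $V_{c+1}=U_1$ counts each element of $C_1$ exactly $N/R$ times, and a direct double-counting rearrangement rewrites this sum as $\sum_{a\in P_1,\,b\in P_2}\bigl(((b-a)\bmod N)+1\bigr)$, where $P_1$ and $P_2$ are the $U_1$- and $U_2$-positions of $V$.

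The combinatorial heart of the argument is then the identity $\sum_{a\in P_1,\,b\in P_2}\bigl((b-a)\bmod N\bigr) = (n_1+1)n_2N/2$, which I would obtain by computing the unrestricted sum $\sum_{a\in P_1,\,b=1}^{N}((b-a)\bmod N) = (n_1+1)N(N-1)/2$ and subtracting the diagonal $\sum_{a,b\in P_1}((b-a)\bmod N) = n_1(n_1+1)N/2$, the latter evaluated via the pairwise symmetry $((b-a)\bmod N)+((a-b)\bmod N)=N$ for $a\neq b$. Plugging back in, both pieces $|C_1|\,n_2(N+2)$ and $2\sum_{V'\in C_1}f(V')$ evaluate to $(n_1+1)R\,n_2(N+2)/N$, so the per-class contribution vanishes and summing over classes completes the proof. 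The step I expect to be the main obstacle is the careful bookkeeping of the $N/R$ factor when $C$ has non-maximal period, but the plan above handles this uniformly, so that the identity holds without case distinctions on the cyclic structure.
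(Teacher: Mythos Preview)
Your argument is correct, and the overall strategy matches the paper's: exploit the centrality of $U_0$ and the cyclic invariance of the trace to reduce the sum to a pure combinatorial identity, then verify that identity on each cyclic equivalence class. The execution, however, is genuinely different. You first \emph{separate} the two sources of multiplicity: the order-statistic identity $\mathbb{E}[q_m]=m(k+1)/N$ cleanly disposes of the sum over placements of the $n_0$ zeros, reducing everything to a statement about length-$N$ words $V$ in $\{U_1,U_2\}$; you then finish with the short modular identity $\sum_{a\in P_1,\,b\in P_2}((b-a)\bmod N)=\tfrac{1}{2}(n_1+1)n_2N$, proved by the pair-symmetry $((b-a)\bmod N)+((a-b)\bmod N)=N$. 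The paper instead keeps the zero-placements and the cyclic shifts entangled: it rewrites $d$ as $\sum_{i_j=2}\sum_{x=1}^k\sigma_j(x)$, introduces an auxiliary sign $\epsilon_{rtu}$ and vectors $\vec e,\vec f$ attached to the cyclic word $(a_1,\ldots,a_m)$, and shows the whole sum equals a constant multiple of $e=\sum_t e_t$, which is then proven to vanish by a ``red and blue dots on a circle'' swapping argument. Your route is more elementary and modular (each step is a standard identity one can quote), while the paper's gives a pleasant geometric picture at the end but requires more bespoke bookkeeping to get there. Your handling of the period $R\mid N$ via the overcount factor $N/R$ is correct and indeed makes the argument uniform in the cyclic structure, as you anticipated.
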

\noindent As a corollary, the Hamida function on the tuple will vanish: simply collate the terms in the sum (\ref{eq:FinalSeries}) with fixed $k$, $n_1$ and $n_2$. This proves Lemma \ref{prop:vanishingterm}.

\medskip

The rest of this section consists on a combinatorial proof of Proposition \ref{vanish}.

\medskip

For every sequence $(i_1,\cdots i_k) \in I$ we define $T(i_1,\cdots i_k)=(1,i_{j_1},i_{j_2},\cdot,i_{j_{n_1+n_2}})$, where the $i_{j_t}$ form the subsequence of all non-zero terms, in the same order as they appear in $(i_1,\cdots i_k)$.  That is, the operation $T$ adds a $1$ to the start of the sequence and removes all the $0$'s.

We partition $I$ into equivalence classes by setting $i \sim i'$ precisely when $T(i)$ is a cyclic permutation of $T(i')$.  Our strategy in proving Proposition \ref{vanish} will be to show that for any equivalence class $J \subset I$, we have: 

\begin{eqnarray} 
	\sum_{(i_1, \ldots, i_{k})\in J}
          d(i_1, \ldots, i_{k}) \,  \textup{Tr} \left(U_1U_{i_1} \ldots U_{i_{k}}\right)=0.
 \end{eqnarray}

\begin{lem}
The traces $\textup{Tr} \left(U_1U_{i_1} \ldots U_{i_{k}}\right)$ in the sum above are all equal.
\end{lem}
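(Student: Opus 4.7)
The plan is to exploit the centrality of $X_0^\ast X_0$ to reduce each trace to one involving only $U_1$ and $U_2$, and then invoke cyclic invariance of the trace to handle the equivalence relation.

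First I would observe that since $X_0^\ast X_0$ is a scalar multiple of the identity, so is $U_0 = X_0^\ast X_0 - I$, say $U_0 = \alpha I$ for some $\alpha \in \mathbb{C}$. Then in any product $U_1 U_{i_1} \cdots U_{i_k}$, each occurrence of $U_0$ may be pulled out as the scalar $\alpha$. Writing $n_0 = k - n_1 - n_2$ for the number of zero entries among $(i_1, \ldots, i_k)$, this yields
\[
\textup{Tr}(U_1 U_{i_1} \cdots U_{i_k}) = \alpha^{n_0} \, \textup{Tr}\bigl(U_1 U_{i_{j_1}} \cdots U_{i_{j_{n_1+n_2}}}\bigr),
\]
where $i_{j_1}, \ldots, i_{j_{n_1+n_2}}$ is the subsequence of non-zero entries of $(i_1, \ldots, i_k)$ listed in order. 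By the definition of $T$, the sequence of indices on the right is precisely $T(i_1, \ldots, i_k)$.

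Next, I would note that every element of $J$ has exactly $n_1$ ones, $n_2$ twos, and hence $n_0$ zeros, so the scalar prefactor $\alpha^{n_0}$ is constant on $J$. Finally, if $(i_1, \ldots, i_k) \sim (i'_1, \ldots, i'_k)$, then by definition of $\sim$ the sequence $T(i)$ is a cyclic permutation of $T(i')$, so the matrix products on the right-hand side differ only by a cyclic permutation of their factors. The trace is invariant under cyclic permutation, so the reduced traces agree, and multiplying back by the common scalar $\alpha^{n_0}$ gives the desired equality.

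There is no genuine obstacle here: the statement is essentially a direct consequence of centrality (which linearises the $U_0$ factors) together with cyclicity of the trace (which absorbs the cyclic permutation built into $\sim$). The only care required is in the bookkeeping, namely verifying that the $T$-operation exactly matches ``deleting $U_0$ factors and prepending $U_1$'', and that the exponent $n_0$ is constant on each equivalence class $J$.
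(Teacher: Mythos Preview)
Your proof is correct and follows essentially the same approach as the paper's: both exploit that $U_0$ is central (the paper says the trace is unaffected by the position of the $U_0$ factors, while you go slightly further and write $U_0=\alpha I$ to pull it out as a scalar) together with cyclic invariance of the trace, noting that the number of $U_0$ factors is the same across $J$. The only difference is presentational, not mathematical.
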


\begin{proof}
The trace of a product of matrices is invariant under cyclic permutations of the product.  Further, as $U_0$ is central, the trace is not effected by the exact position of the $U_0$ terms in the product.

Given two traces $\textup{Tr} (U_1U_{i_1} \ldots U_{i_{k}})$ and $\textup{Tr} (U_1U_{i_1'} \ldots U_{i_{k}'})$ in the above sum, we may cycle the product of matrices in the first one to get the the second one, if we ignore the $U_0$ terms. Since both products have the same number $k-n_1-n_2$ of $U_0$ terms, and they are central, both traces coincide.
\end{proof}
\noindent{}Therefore it then remains to prove the following purely combinatorial result:
\begin{eqnarray} \label{eqn:combres}
	\sum_{(i_1, \ldots, i_{k})\in J}
          d(i_1, \ldots, i_{k}) =0.
 \end{eqnarray}
\noindent We first examine $d$ more closely:
\begin{equation}\label{eqn:d}
d(i_1, \ldots, i_{k})\,\,=\,\,n_2(k+1)-2\sum_{i_j=2}j \,\,=\,\,\sum_{i_j=2} \left(k+1-2j\right)\,\,=\,\,\sum_{i_j=2}\sum_{x=1}^k \sigma_j(x)
\end{equation}
where 
\[
\sigma_j(x)=\left \{ \begin{array}{rc}-1& \text{if } x<j, \\ 0& \text{if } x=j ,\\ 1& \text{if } x>j.  \end{array}\right.
\]
(For the last equality in (\ref{eqn:d}), note that $\sum_{x=1}^k \sigma_j(x)=(-1)(j-1)+1(k-j)=k-2j+1$.)\\
Fix a sequence $i \in J$ and let $T(i)=(a_1,a_2,\cdots, a_{m})$.  Then $m=n_1+n_2+1$ and each $a_t$ lies in $\{1,2\}$.  Let $s$ denote the smallest non-negative integer such that $a_t=a_{t+s}$ for all $t$, where the indices are taken modulo $m$.  That is, the order of the group of cyclic symmetries of $T(i)$ is $m/s$. 

Now let $C$ denote the set of $m$ dimensional vectors with non-negative integer entries which sum to $k-n_1-n_2$. (Note that $C$ is a finite set.)

To each integer $l$, $1\le l\le m$, such that $a_l=1$ and to each vector $\vec{c}=(c_1,\cdots,c_m) \in C$, we associate a sequence $\theta(l,\vec{c}) \in J$ constructed as follows:

\begin{itemize}
\item[1)] Starting with the sequence $(a_1,\cdots,a_m)$, insert $c_t$ zeroes after $a_t$, for each $t$.

\item[2)] Cycle the resulting sequence of length $k+1$ so that $a_l=1$ is the first term.

\item[3)] Delete the first term (which is $a_l=1$).
\end{itemize}

%$$(b_1,\cdots,b_{c_{l}},a_{l+1},b_1,\cdots,b_{c_{l+1}},a_{l+2},\cdots,b_1,\cdots,b_{c_{m-1}}, a_m,b_1\cdots,b_{c_m}},a_1,\cdots, a_{l-1},b_1,\cdots,b_{l_1})$$

\begin{lem}
Every element of $J$ can be constructed in precisely $m/s$ ways as $\theta(l,\vec{c})$ for an integer $l$ with $a_l=1$ and a vector $\vec{c}\in C$.  
\end{lem}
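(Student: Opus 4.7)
The plan is to fix $j=(j_1,\ldots,j_k)\in J$, unpack the construction of $\theta$ explicitly so that the fibre $\theta^{-1}(j)$ is realised as a set of cyclic starting points, and then count them using the symmetry group of $(a_1,\ldots,a_m)$.

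First I would unwind the three steps of $\theta$. Inserting $c_t$ zeros after $a_t$ produces a length $k+1$ sequence; cycling so that the inserted $a_l$ comes first gives a word whose first entry is $a_l=1$ followed by $c_l$ zeros, then $a_{l+1}$, then $c_{l+1}$ zeros, and so on cyclically in the $a$'s indexing; deleting the leading $a_l$ yields $\theta(l,\vec c)$ of length $k$. Equivalently, $\theta(l,\vec c)=j$ if and only if the extended sequence $\tilde j:=(1,j_1,\ldots,j_k)$ has nonzero subsequence equal to $(a_l,a_{l+1},\ldots,a_{l+m-1})$ (indices mod $m$), and the components of $\vec c$ record, in the appropriate cyclic order, the numbers of zeros between consecutive nonzero entries of $\tilde j$.

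From this reformulation, $\vec c$ is uniquely determined by $l$ and $j$. Hence counting preimages of $j$ reduces to counting the admissible values of $l\in\{1,\ldots,m\}$, namely those for which the cyclic rotation of $T(i)=(a_1,\ldots,a_m)$ starting at position $l$ coincides with $T(j)$. Note that the condition $a_l=1$ is automatic, since the first entry of $T(j)$ is the prepended $1$.

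Since $j\in J$, by definition $T(j)$ is a cyclic permutation of $T(i)$, so at least one admissible $l_0$ exists. The full set of admissible $l$ is then $l_0+\mathrm{Stab}$, where $\mathrm{Stab}\subset\mathbb Z/m\mathbb Z$ is the subgroup of rotations fixing $(a_1,\ldots,a_m)$. By the defining property of $s$ as the smallest positive rotation period, $\mathrm{Stab}=\langle s\rangle$ has order $m/s$, and every rotation in $\mathrm{Stab}$ preserves each $a_t$, so in particular every $l\in l_0+\mathrm{Stab}$ satisfies $a_l=1$. Thus there are exactly $m/s$ admissible starting points, hence exactly $m/s$ preimages. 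The only real obstacle is bookkeeping: keeping track of the positions in the extended sequence versus in $(a_1,\ldots,a_m)$, and being scrupulous about the cyclic identifications, but the argument itself is purely combinatorial.
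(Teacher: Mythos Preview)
Your proof is correct and follows the same approach as the paper's: show that for each $j\in J$ the vector $\vec c$ is uniquely determined by the choice of $l$, and then count the admissible $l$'s as $m/s$. The paper's own proof is a two-sentence sketch asserting these two facts; your argument fills in the details by identifying the admissible $l$'s explicitly as a coset of the stabiliser $\langle s\rangle\subset\mathbb Z/m\mathbb Z$ of $(a_1,\ldots,a_m)$ under cyclic rotation, and by noting that the condition $a_l=1$ is forced by the first entry of $T(j)$.
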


\begin{proof}
Given a sequence $j \in J$, which we wish to construct by the operation above, we have a choice of $m/s$ integers for $l$.  And for each such choice there is a unique choice of vector $\vec{c} \in C$ which results in $j$.
\end{proof}

Therefore we can write
\begin{eqnarray} \label{quadsum}
	\sum_{(i_1, \ldots, i_{k})\in J}
          d(i_1, \ldots, i_{k}) =  \frac sm \sum_{\vec{c}\in C}\sum_{a_l=1} d\left(\theta(l,\vec{c})\right) \stackrel{(\ref{eqn:d})}{=} \frac sm \sum_{\vec{c}\in C}\sum_{a_l=1}\sum_{a_j=2} \sum_{x=1}^k \sigma_{b_j}(x),
 \end{eqnarray}
where in each case $b_j$ denotes the position of the term $a_j$ in the sequence $\theta(l,\vec{c})$.

Given integers $r,t,u$ modulo $m$, we set: 
\[
\epsilon_{rtu}=\left \{ \begin{array}{rc}-1 & {\rm if\, starting\, at\,} r\, {\rm and\, repeatedly\, adding\,} 1{\rm, one\, arrives\, at\,} u\, {\rm before\,} t, \\ \,\,0 & {\rm if}\, r,t,u \, {\rm are \, not\,pairwise\,distinct,}\hfill   \\ \,\,1 & {\rm if \,starting\, at\,} r\, {\rm and\, repeatedly\, adding\,} 1{\rm, one\, arrives\, at\,} t\, {\rm before\,} u.   \end{array}\right.
\]
To the sequence $(a_1,\cdots,a_m)$ we associate two $m$ dimensional vectors $\vec{e}=(e_1,\cdots,e_m)$ and $\vec{f}=(f_1,\cdots,f_m)$.  They are defined as
$$e_t= \sum_{a_r=1}\sum_{a_u=2} \epsilon_{rtu}, \qquad f_t= \left\{\begin{array}{cc} e_t+ n_2 & a_t=1,\\e_t-(n_1+1) & a_t=2. \end{array}\right.
$$
One may visualize these definitions by arranging the $a_t$ round an oriented circle and noting that $e_t$ counts the arcs from a 1 to a 2 passing through $a_t$ (with sign given by the direction), and $f_t$ counts the arcs from a 1 to a 2 containing the arc between $a_t$ and $a_{t+1}$ (again with sign given by the direction). As always, when dealing with the $a_t$, we regard the indices modulo $m$, and recall that there is $n_1+1$ 1's and $n_2$ 2's in the sequence $(a_1,\ldots,a_m)$.

Let $e=\sum_{t=1}^m e_t$ and note that $$\sum_{t=1}^m f_t=e+(n_1+1)n_2-n_2(n_1+1)=e.$$ 

Now by symmetry we have $\sum_{\vec{c}\in C} \vec{c}= r\,(1,\cdots,1)$, for some integer $r$.  We may separate the terms in the final sum in (\ref{quadsum}) into those where the $x^{\rm th}$ term in $\theta(l,\vec{c})$ is zero, and those where it is not:

\begin{eqnarray*} 
	\sum_{(i_1, \ldots, i_{k})\in J}
          d (i_1, \ldots, i_{k}) =   \frac sm \sum_{\vec{c}\in C}\sum_{a_l=1}\sum_{a_j=2} \sum_{x=1}^k \sigma_{b_j}(x)
=\frac sm \sum_{\vec{c}\in C}\sum_{t=1}^m (c_t f_t+e_t)\\ =\frac sm (re+|C|e).
 \end{eqnarray*}

\noindent Thus we may complete the proof of (\ref{eqn:combres}) by showing that $e=0$.  That is:

\begin{lem}
Suppose a finite number of red and blue dots are arranged round a circle.  Let \emph{segment} denote a portion of the circle between adjacent dots.  Consider the set of all directed arcs going from a blue dot to a red dot.  Then the total number $e$ of segments traversed by the arcs (counting with negative sign if counterclockwise and positive sign if clockwise) is 0. 
\end{lem}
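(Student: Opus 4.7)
The plan is to translate $e$ into an explicit double sum and then exploit crucially that the red and blue dots together exhaust all positions on the circle. Label the positions $1, \ldots, m$ cyclically, and for distinct positions $x, y$ let $d(x, y)$ denote the number of segments on the clockwise arc from $x$ to $y$, so that $d(x, y) + d(y, x) = m$. For each ordered pair $(u, r) \in B \times R$, the clockwise arc from $u$ to $r$ contributes $+d(u, r)$ to $e$ and the counterclockwise one contributes $-d(r, u)$; hence
\[
	e \;=\; \sum_{u \in B,\, r \in R} \bigl(d(u, r) - d(r, u)\bigr).
\]

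Next I would encode the partition by indicators: set $a_i = 1$ if position $i$ is red and $a_i = 0$ if it is blue, and let $b_i = 1 - a_i$ be the indicator of $B$. A direct calculation yields the key algebraic identity
\[
	b_i a_j - a_i b_j \;=\; (1 - a_i)\, a_j - a_i\,(1 - a_j) \;=\; a_j - a_i,
\]
so
\[
	e \;=\; \sum_{i, j = 1}^m \bigl(b_i a_j - a_i b_j\bigr)\, d(i, j) \;=\; \sum_{i, j = 1}^m (a_j - a_i)\, d(i, j).
\]

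I would then observe that the row and column sums of $d$ are constant: for any fixed $i$, the values $d(i, 1), \ldots, d(i, m)$ are a permutation of $0, 1, \ldots, m - 1$, hence $\sum_j d(i, j) = m(m - 1)/2$, and similarly $\sum_i d(i, j) = m(m - 1)/2$. Splitting the double sum as $\sum_j a_j \sum_i d(i, j) - \sum_i a_i \sum_j d(i, j)$ therefore yields two identical terms $\tfrac{m(m-1)}{2}\,|R|$, which cancel and give $e = 0$.

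The only step that demands any care is the algebraic identity in the second paragraph; it is precisely the partition hypothesis $R \sqcup B = \{1, \ldots, m\}$, encoded by $b_i = 1 - a_i$, that causes the apparently bilinear cross-sum to collapse into an antisymmetric linear expression whose coefficients have constant row and column sums. Without this hypothesis no such simple cancellation is available, and one would be stuck analyzing a genuine bilinear form in the colouring.
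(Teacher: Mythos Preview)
Your proof is correct. After extending $d$ by $d(i,i)=0$ (harmless, since the coefficient $a_j-a_i$ vanishes on the diagonal), every step goes through: the identity $b_ia_j-a_ib_j=a_j-a_i$ is exactly the partition hypothesis, and the constancy of the row and column sums of the cyclic distance matrix finishes the job.

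The paper argues quite differently. It first observes that if all blue dots lie on one semicircle and all red dots on the other, then $e=0$ by symmetry, and then shows that $e$ is invariant under swapping an adjacent red/blue pair; since any configuration can be reached from the separated one by such swaps, the result follows. Your approach is a direct algebraic computation, replacing the ``base case plus local move'' reduction by a single identity. It is shorter and makes transparent \emph{where} the hypothesis $B\sqcup R=\{1,\ldots,m\}$ is used: precisely in collapsing the bilinear form $b_ia_j-a_ib_j$ to the linear antisymmetric expression $a_j-a_i$. The paper's argument, on the other hand, is more geometric and would adapt more readily to variants where one wanted to track how $e$ changes under moves rather than merely show it vanishes.
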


\noindent (Note we have replaced 1's and 2's with blue dots and red dots respectively).

\begin{proof}
First consider a diameter of the circle and suppose that all the blue dots lie on one side of the diameter and all the red dots lie on the other side.  Then by symmetry we would have $e=0$.

It remains to show that $e$ is constant under interchanging an adjacent red and blue dot.  Let $e_1$ be the value of $e$ when the blue dot is clockwise of the red dot (state 1) and let $e_2$ be the value of $e$ when they are interchanged, so the red dot is clockwise of the blue dot (state 2).  As before, let $m$ denote the total number of dots.

Then every dot other than the pair we are interchanging, has two arcs joining it to one of the pair.  Each of these arcs becomes one segment larger going from state 1 to state 2 (bearing in mind that we are counting segments with sign).  Thus $e_2$ has an extra $2(m-2)$ more than $e_1$ coming from these arcs.

The contributions from the arcs not involving either of the interchanged pair are not effected.  Thus the only other consideration is the two arcs joining the pair.  In state 1 they contribute $-1$ and $m-1$ to the sum.  In state 2 they contribute $1$ and $1-m$ to the sum.

Thus we have $e_2=e_1+2(m-2) -(m-2)+(2-m)=e_1$, as required. 
\end{proof}

%\subsection{Barycentric subdivision} \label{bari}
%The issue of the slow convergence of the error bound (Appendix \ref{error}) when $\|A\|$ is close to 1 for a particular tuple $(X_0,X_1,X_2,X_3)$ can be dealt with by means of an iterative barycentric subdivision of the tuple. Barycentric subdivision of an chain produces an alternating sum of chains of smaller diameter but representing the same element in homology. Hence the value of a cochain (as the Hamida function $\varphi$) on a chain equals the alternating sum of the values at the tuples of the barycentric subdivision.
%
%The barycentric subdivision of a 3-chain $(X_0,X_1,X_2,X_3)$ with associated matrix $A$ produces 24 3-chains, each with associated matrix of norm at most $\frac{3}{4}\|A\|$.
%
%We predict that the 

\section{A numerical computation of  $V_1(F)$  \label{section:Numerical}}

In the course of his doctoral studies \cite{ZackyPhD}, the $1^{st}$ author carried out his own C++ implementation of an algorithm based on Theorem \ref{thm:FinalSeries} to compute $V_1(F)$ in the case where $F$ is the cyclotomic field  $\mathbb{Q}(\omega)$ with $\omega=e^{\frac{2 \pi i}{3}}$.  %Although the focus of the present article is the theory behind the computation of $V_1(F)$, 
We will now briefly discuss the numerical results of \cite{ZackyPhD} based on our theory.

There is just one pair of conjugate embeddings $\psi_1,\overline{\psi}_1\colon F \hookrightarrow \mathbb{C}$, and one
pair of conjugate primitive $3^{\rm rd}$ roots of unity, $\omega,\,\omega^2\in F$.  Thus $V_1(F)$ is the absolute value of the determinant of the one by one  matrix whose entry (see \S\ref{section:basis}) is:  $$L_{11}=b_1 \left([(3Z_1-Z_2(3))|_{t=\omega}]\right)/i \,.$$   
This has 6844 terms coming from $Z_1$ and 9 terms coming from $Z_2(3)$.  In \S5.1 of \cite{ZackyPhD}, it is explained that the program first removed terms containing repeated matrices, as they do not contribute to $L_{11}$, and combined terms with the same matrices in permuted order (see Remark \ref{glutus:maximus}).  
This left 3450 distinct terms.   The series formula of Theorem \ref{thm:FinalSeries} was applied to these terms.  After running for 12 hours on a 2.4GHz computer the estimated value of $V_1(F)$, up to two decimal points, was
$$
 \ V_1(F)=0.02\,.
$$
%In \cite{ZackyPhD} it is suggested that this is likely to be accurate up to an error of at most 0.01.  However the implementation was not fast enough to guaranty this level of accuracy.
Assuming that the error is in fact no larger than 0.01, we would know that $V_1(F)\neq 0$ and, from Theorem \ref{almost there}, that $R_1(F)=V_1(F)/{\rm ind}(F)$ with $V_1(F)$ lying in the range $[0.01,0.03]$, and ${\rm ind}(F) \in \mathbb{Z}$.

For this number field the Borel regulator $R_1(F)$ is known to be $-2^k\, 3\, \zeta_F^\ast(-1)$ for some $k \in \mathbb{Z}$, and $\zeta_F^\ast(-1) = -0.02692\ldots$ (see \S5.2 in \cite{ZackyPhD} for details). 

If $\textup{ind}(F)=1$, then we would have $R_1(F)=V_1(F)$, and the calculation would  suggest that $k=-2$, as this would give $R_1(F)=\frac{3}{4} \zeta_F^\ast(-1)=0.02019\ldots$.  However, even assuming that the error is bounded by 0.01, this is still just big enough to allow $k=-3$. Of course reducing the size of the error would eliminate this possibility.  This would require a faster implementation than the one in \cite{ZackyPhD}.

\section*{Acknowledgements}
We would like to warmly thank Herbert Gangl for his helpful advice and detailed comments on a previous version of our paper. We would also like to thank MathOverflow user `Ralph' for answering a question about the Hurewicz homomorphism, Rob Snocken and Christopher Voll for explaining various aspects of Dedekind zeta functions to us, Bernhard Koeck for useful discussions on $K$-theory, Matthias Flach for his comment on the orders of some K-theory groups and the Algebra and Number Theory Group at University College Dublin, in particular Robert Osburn, for their invitation to discuss this work.

Finally we would like to thank EPSRC for funding this research, Sajeda Mannan for facilitating a research visit, and the universities of Sheffield and Southampton for supporting this work.

%%% THE BIBLIOGRAPHY %%%%%%%%%%%%%%%%%%%%%%%%%%%%%%%%%%%%%%%%%%%%
\bibliographystyle{amsplain}
\bibliography{BiblioCMSS}

\end{document}